\documentclass[12pt]{amsart}
\usepackage{graphicx}
\usepackage{amsmath, enumerate}
\usepackage{amsfonts, amssymb, amsthm, xcolor, stmaryrd}
\usepackage{tikz, tikz-cd, caption, tabu, longtable, mathrsfs, xtab}
 \usepackage{dsfont, cite, todonotes}
\usepackage[toc,page]{appendix}
\usepackage{hyperref}

\pdfpagewidth 8.5in
\pdfpageheight 11in

\setlength\textheight{7.7in}
\setlength\textwidth{6.5in}
\setlength\oddsidemargin{0in}
\setlength\evensidemargin{0in}

\numberwithin{equation}{section}
\theoremstyle{plain}

\newtheorem{thm}{Theorem}[section]
\newtheorem{lemma}[thm]{Lemma}
\newtheorem{prop}[thm]{Proposition}

\theoremstyle{definition}

\newtheorem{exmp}[thm]{Example}

\theoremstyle{remark}
\newtheorem{rmk}[thm]{Remark}


\newcommand{\kzxz}[4]{\left(\begin{smallmatrix} #1 & #2 \\ #3 & #4\end{smallmatrix}\right) }

\newcommand{\lp}{\left (}
\newcommand{\rp}{\right )}
\newcommand{\smat}[4]{\left(\begin{smallmatrix}
                 #1 & #2\\
                 #3 & #4
\end{smallmatrix}\right)}
\newcommand{\pmat}[4]{\begin{pmatrix}
                 #1 & #2\\
                 #3 & #4
\end{pmatrix}}

 \newcommand{\Nm}{{\mathrm{Nm}}}
\newcommand{\KM}{{\mathrm{KM}}}
\newcommand{\PSL}{{\mathrm{PSL}}}
\newcommand{\Sh}{{\mathrm{Sh}}}
\newcommand{\SL}{{\mathrm{SL}}}
\newcommand{\SO}{{\mathrm{SO}}}
\newcommand{\Spin}{{\mathrm{Spin}}}

\newcommand{\tth}{\textsuperscript{th }}
\newcommand{\sgn}{\mathrm{sgn}}
\newcommand{\Tr}{\mathrm{Tr}}
\newcommand{\tr}{\mathrm{tr}}

\newcommand{\Aut}{{\mathrm{Aut}}}

\newcommand{\erfc}{{\mathrm{erfc}}}

\newcommand{\ef}{\mathfrak{e}}

\newcommand{\Ac}{{\mathcal{A}}}

\newcommand{\Dc}{{\mathcal{D}}}

\newcommand{\Fc}{{\mathcal{F}}}

 \newcommand{\Oc}{\mathcal{O}}
 \newcommand{\Qc}{\mathcal{Q}}

\newcommand{\Bb}{\mathbb{B}}
\newcommand{\Cb}{\mathbb{C}}

\newcommand{\Hb}{\mathbb{H}}

\newcommand{\Nb}{\mathbb{N}}
\newcommand{\Pb}{\mathbb{P}}
\newcommand{\Qb}{\mathbb{Q}}

\newcommand{\Rb}{\mathbb{R}}
\newcommand{\Zb}{\mathbb{Z}}
\newcommand{\ebf}{\mathbf{e}}

\newcommand{\zbar}{{\overline{z}}}
\newcommand{\tpsi}{{\tilde{\psi}}}

\newcommand{\half}{{\tfrac{1}{2}}}

\renewcommand{\Re}{\mathrm{Re}}
\renewcommand{\Im}{\mathrm{Im}}
\newcommand{\res}{\mathrm{res}}

\newcommand{\Iso}{{\mathrm{Iso}}}

\newcommand{\mds}{{S}}

\newcommand{\Div}{{\mathrm{Div}}}
\newcommand{\DD}{{\mathrm{D}}}
\newcommand{\leg}[2]{\left( \frac{#1}{#2} \right)}
\newcommand{\Mp}{{\mathrm{Mp}}}
\newcommand{\ind}{{\mathrm{ind}}}
\newcommand{\Ib}{{\mathbf{I}}}

\newcommand{\Ind}{\mathrm{Ind}}

\AtBeginDocument{%
  \def\MR#1{}
  }
\begin{document}
\title{Modularity of generating series of winding numbers}
\author[J. H. Bruinier, J. Funke, \"{O}. Imamoglu, Y. Li]{Jan H. Bruinier, Jens Funke, \"{O}zlem Imamo\={g}lu, Yingkun Li}

\address{Fachbereich Mathematik,
Technische Universit\"at Darmstadt, Schlossgartenstrasse 7, 64289 Darmstadt, Germany}
\email{bruinier@mathematik.tu-darmstadt.de}

\address{
Department of Mathematical Sciences, University of Durham, South Road, Durham DH1 3LE, UK
}
\email{jens.funke@durham.ac.uk}

\address{
Departement Mathematik,
ETH Z\"urich,
R\"amistrasse 101,
8092 Z\"urich,
Switzerland}
\email{ozlem@math.ethz.ch}

\address{Fachbereich Mathematik,
Technische Universit\"at Darmstadt, Schlossgartenstrasse 7, 64289 Darmstadt, Germany}
\email{li@mathematik.tu-darmstadt.de}


\date{\today}
\maketitle

\begin{abstract}
The Shimura correspondence connects modular forms of integral weights and half-integral weights.
One of the directions is realized by the Shintani lift, where the inputs are holomorphic differentials and the outputs are holomorphic modular forms of half-integral weight.
In this article, we generalize this lift to differentials of the third kind. As an application we obtain a modularity result concerning the generating series of winding numbers of closed geodesics on the modular curve. 
\end{abstract}

\section{Introduction}
For an arithmetic subgroup $\Gamma \subset \SL_2(\Rb)$, the quotient $M = \Gamma \backslash \Hb$ is a  (possibly non-compact) Riemann surface. Denote by $M^*$ the standard compactification of $M$. On this real surface, there is an ample supply of closed geodesics associated to indefinite binary quadratic forms. Given any differential 1-form $\eta$ on $M^*$, it is natural to pair this form with these geodesics, and study the generating series of these numbers. This was initiated by Shintani for holomorphic $1$-forms $\eta=f(z)dz$ coming from a holomorphic cusp form $f$ \cite{Shintani75}.
He viewed $M^*$ as a locally symmetric space associated to the orthogonal group of signature $(2, 1)$, and considered the integral
\begin{equation}  
\label{eq:I}
I(\tau, \eta) := \int_{M^*} \eta \wedge \Theta(\tau, z, \varphi) d\bar{z},
\end{equation}
where $\Theta(\tau, z, \varphi) d\bar{z}$ is a theta function valued in differential 1-forms.
The output is the generating series of cycle integrals of $f$, and also a modular form of half-integral weight corresponding to $f$ under the Shimura correspondence \cite{shimura73}.
This construction has been extended to much greater generality by Kudla and Millson \cite{KM87, KM90} for closed, rapidly decaying differential forms $\eta$ on locally symmetric spaces associated to orthogonal and unitary groups to produce holomorphic Siegel and Hermitian modular forms. These results can be considered as analogs of the classical result of Hirzebruch and Zagier \cite{HZ76}.

In this paper, we study the case when $\eta$ is a differential 1-form of the third kind on $M^*$.
The integral $I(\tau, \eta)$ now needs to be suitably regularized (see  \eqref{eq:regI}).
The output will be a real-analytic modular form of weight $3/2$, whose holomorphic part consists of cycle integrals of $\eta$. 
In particular, if $M^*$ has genus zero, then these cycle integrals are closely related to the winding number of the closed geodesics around the poles of $\eta$.
To see this, we take as an example $\Gamma = \SL_2(\Zb)$ and consider
\begin{equation}
  \label{eq:fw}
  \eta_{} := \frac{j'(z)}{j(z) - 1728} dz,
\end{equation}
where  $j: M^* \to \mathbb{P}^1(\Cb)$ is the Klein $j$-invariant.

For a positive discriminant $D$, let $\Qc_D$ be the set of integral, binary quadratic forms $[A, B, C]=Ax^2+Bxy+Cy^2$ of discriminant $D$, which has an action by $\Gamma$. One can associate a closed geodesic $c_X$ to each $X \in \Qc_D$, whose image $c(X)$ in $M^*$ is a closed curve depending only on the $\Gamma$-equivalent class of $X$.
Let $\Ind(X)$ be the winding number of $j(c(X))$ with respect to $1728$ and $\infty$ on $\mathbb{P}^1(\Cb)$, where we count with half multiplicity if $j(c(X))$ passes through 1728 or $\infty$.
As a special case of our main theorem, we show that the generating functions of certain averages of these winding numbers possess modular properties.
\begin{thm}
  \label{thm:scalar1}
Let $\Delta < 0$ be a fixed fundamental discriminant, $\chi_{\Delta}$ the associated genus character (see  \eqref{eq:genus}), and $L(s, \Delta)$ the Dirichlet $L$-function associated to the Dirichlet character $\lp\frac{\Delta}{\cdot} \rp$.
Then the generating series
\begin{equation}
  \label{eq:Wind_gen}
  G_\Delta(\tau) :=  L(0, \Delta) + \sum_{\begin{subarray}{c} d > 0 \\ d \equiv 0, 3 \bmod{4} \end{subarray}}
q^d
  \sum_{X \in \Gamma \backslash \Qc_{-d\Delta}} \chi_{\Delta}(X) \Ind(X) 
\end{equation}
is a mixed mock modular form in the sense of \cite{DMZ12} (see section \ref{subsec:Weilrep}) of weight $3/2$ on $\Gamma_0(4)$. Its shadow is the Siegel indefinite theta function
$$
\theta_\Delta(\tau) := \frac{1}{\sqrt{|\Delta|}} \sum_{A, B, C \in \Zb} \chi_\Delta([A, B, C]) (A + C) q^{-(B^2 - 4AC)/|\Delta|} e^{-4 \pi v (B^2 + (A - C)^2)/|\Delta|}.
$$
\end{thm}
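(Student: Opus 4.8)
The plan is to deduce the theorem from the general version of this result established in the paper for the regularized lift \eqref{eq:regI}, applied to the differential \eqref{eq:fw} together with a genus-character-twisted Shintani kernel; the remaining work is to identify the input precisely, to make its Fourier expansion, constant term, and shadow explicit for this choice, and to convert cycle integrals of $\eta$ into winding numbers.

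First I would pin down $\eta$. Using $j-1728 = E_6^2/\Delta_{12}$, where $\Delta_{12}$ denotes the weight-$12$ cusp form, together with Ramanujan's differential identities,
\[
\eta \;=\; \frac{j'(z)}{j(z)-1728}\,dz \;=\; 2\,d\log E_6 - d\log\Delta_{12} \;=\; -\,2\pi i\,\frac{E_4^2(z)}{E_6(z)}\,dz ,
\]
so $\eta$ is a meromorphic differential of the third kind on $M^* = \SL_2(\Zb)\backslash\Hb^*$: it is holomorphic away from simple poles at the elliptic point $z=i$ (where $j=1728$) and at the cusp $\infty$, with opposite integral residues, and the principal value of $\int(w-w_0)^{-1}\,dw$ across such a pole reproduces the half-multiplicity convention. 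Since $\eta = d\log(j-1728)$ and $j$ identifies $z_0$ with $\gamma z_0$ for any $z_0$ on the geodesic $c_X$ and any generator $\gamma$ of the stabilizer $\Gamma_X$, the cycle integral telescopes to $\int_{c(X)}\eta = 2\pi i\,\Ind(X)$; that is, $\Ind(X) = \tfrac{1}{2\pi i}\int^{\mathrm{reg}}_{c(X)}\eta$ is the winding number of $j(c(X))$ with respect to $1728$ and $\infty$. (This also covers square discriminants, where $c(X)$ degenerates into a geodesic through the cusp and the regularization at the cusp enters.)

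Next I would feed $\eta$ and the Shintani kernel twisted by the genus character $\chi_\Delta$ of the fixed fundamental discriminant $\Delta<0$ into \eqref{eq:regI}; call the result $I_\Delta(\tau,\eta)$. By the general theorem it is a real-analytic modular form of weight $3/2$ on $\Gamma_0(4)$, in fact in Kohnen's plus space (so that only $d\equiv 0,3\bmod 4$ occur), the twist rescaling the quadratic lattice by $|\Delta|$ and producing the factor $1/\sqrt{|\Delta|}$. Unfolding against the geodesic cycles, the $q^d$-coefficient ($d>0$) of the holomorphic part of $I_\Delta(\tau,\eta)$ is a fixed nonzero multiple of $\sum_{X\in\Gamma\backslash\Qc_{-d\Delta}}\chi_\Delta(X)\,\tfrac{1}{2\pi i}\int^{\mathrm{reg}}_{c(X)}\eta = \sum_X\chi_\Delta(X)\,\Ind(X)$, while the constant term comes from the two poles of $\eta$ and, after a residue computation via a Kronecker-limit-type argument, equals $L(0,\Delta)$. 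Thus the holomorphic part of $I_\Delta(\tau,\eta)$ is $G_\Delta$ up to normalization, which yields the asserted mixed mock modularity of weight $3/2$.

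It remains to compute the shadow. Since $\eta$ is meromorphic it is $\bar\partial_z$-closed off its poles, so the non-holomorphy of $I_\Delta(\tau,\eta)$ in $\tau$ comes only from the theta kernel and from boundary terms the regularization introduces at the poles of $\eta$. Applying $\xi_{3/2}$ in $\tau$ (section \ref{subsec:Weilrep}) and using the Kudla--Millson-type identity expressing $\xi_{3/2}$ of the Shintani kernel as a $z$-exact form, Stokes' theorem collapses the $z$-integral onto the poles of $\eta$; working this out, the contribution at $z=i$ is the Siegel theta function of the $\chi_\Delta$-twisted Schwartz function evaluated at the CM point $z=i$, in which the linear factor $A+C$ appears as the component of $[A,B,C]$ along the negative line at $z=i$ and $e^{-4\pi v(B^2+(A-C)^2)/|\Delta|}$ as the evaluation at $z=i$ of the standard positive-definite majorant of $[A,B,C]$ (note $B^2+(A-C)^2 = (B^2-4AC)+(A+C)^2$) --- i.e., exactly $\theta_\Delta$. \textbf{The main obstacle} is this last step: controlling how the regularization of the divergent integral $I_\Delta(\tau,\eta)$ commutes with $\xi_{3/2}$ and with Stokes, so that the boundary contributions reassemble into exactly the stated indefinite theta series with the correct normalization and majorant. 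A secondary difficulty is pinning the constant term down to precisely $L(0,\Delta)$ rather than merely to an expression in $\Delta$.
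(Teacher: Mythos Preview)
Your proposal is correct and follows essentially the paper's route: specialize the regularized Shintani lift $I(\tau,\eta)$ (Theorem~\ref{thm:FE}, after the $\chi_\Delta$-twist of \S3.8) to $N=1$ and the differential $\eta=d\log(j-1728)$, then identify the cycle integrals with $2\pi i\,\Ind(X)$, the shadow with $\Theta(\tau,\res(\eta),\psi)$ evaluated at $z=i$, and the constant term with $L(0,\Delta)$. Two small points of comparison are worth recording. First, the paper does not compute the shadow by differentiating under the integral sign; instead it first obtains the \emph{entire} Fourier expansion of $I(\tau,\eta)$ via Stokes' theorem applied coefficientwise with the explicit $\bar\partial_z$-primitive $\tpsi$ (Lemma~\ref{lemma:diff_eq}(i), Propositions~\ref{prop:Stokes}--\ref{prop:orbit1}), and only then reads off the shadow from $L_\tau\tpsi=\psi$; your ``$L_\tau$ first, then Stokes'' is equivalent (it uses $L_\tau(\varphi_\Sh^0\,d\bar z)=\bar\partial_z\psi$, which follows from the same lemma), and indeed sidesteps your stated main obstacle since one can appeal directly to the already-established formula $L_\tau I(\tau,\eta)=-2\pi i\,\Theta(\tau,\res(\eta),\psi)$. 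Second, the constant term is not obtained by a Kronecker-limit argument and does \emph{not} receive a contribution from both poles: after twisting by $\chi_\Delta$ the interior point $z=i$ contributes nothing to the $m=0$ term, and the cusp $\infty$ contributes via the boundary asymptotic of $\tpsi$ at cusps (Lemma~\ref{lemma:abound}, equation~\eqref{eq:tpsicusp}), giving the finite character sum $\sum_{C\bmod|\Delta|}\bigl(\tfrac{\Delta}{C}\bigr)\Bb_1\!\bigl(\tfrac{-C}{|\Delta|}\bigr)=L(0,\Delta)$ by the classical formula for $L(0,\chi)$.
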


In general, we can consider the higher level analogue, where $\Gamma$ and $\Qc_D$ are replaced by the congruence subgroup $\Gamma_0(N)$ and subset 
$$
\Qc_{N, D} := \{[A, B, C] \in \Qc_D: N \mid A\} \subset \Qc_D
$$
respectively. In this case, the period integrals of an arbitrary differential of the third kind $\eta$ (with residue divisor with integral coefficients) are often transcendental by results of Scholl, Waldschmidt, and W\"ustholz, see e.g. \cite{Scholl86, Wald79, Wu84}. Therefore the coefficients in the generating series above will in general not be algebraic. Nevertheless, the generating series can still be made modular by the following general result, of which Theorem \ref{thm:scalar1} is a special case.

\begin{thm}
  \label{thm:scalarN}
Let $N$ be a positive integer and $\Delta < 0$ a fundamental discriminant satisfying $\Delta \equiv r^2 \bmod{4N}$, and $\chi_\Delta$ the genus character as defined in \eqref{eq:genus}.
For an arbitrary differential of the third kind $\eta$ with residue divisor $\sum_{\zeta \in M^*} r_\zeta(\eta) \cdot [\zeta]$ and $X \in \Qc_{N, D}$, denote $\int_{c(X)} \eta$ the cycle integral of $\eta$ along the geodesic on $M^*$ determined by $X$ (see \eqref{eq:cycle_int}). We define the twisted trace of index $d$ for $\eta$ by 
\[
\Tr_{N, \Delta, d}(\eta) :=  \sum_{X \in \Gamma_0(N) \backslash \Qc_{N, -d\Delta}} \frac{\chi_{\Delta}(X)}{2\pi i} \int_{c(X)} \eta. 
\]
Then the generating series
\begin{equation}
  \label{eq:GN}
  G_{\Delta, N}(\tau) := \sum_{\begin{subarray}{c} d > 0 \\ d \equiv 0, 3 \bmod{4} \end{subarray}}
\Tr_{N, \Delta, d}(\eta) q^d
\end{equation}
becomes a real-analytic modular form of weight $3/2$ on $\Gamma_0(4N)$ after adding the function $\Theta^*_\Delta(\tau, \eta)$ in \eqref{eq:Theta*}, whose image under the lowering operator is the indefinite Siegel theta function $\Theta_\Delta(\tau, \eta)$ in \eqref{eq:ThetaDelta}.
\end{thm}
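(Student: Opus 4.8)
The plan is to realize $G_{\Delta,N}(\tau)$ as the holomorphic part of a regularized theta lift and then deduce its transformation behavior from the modularity of the theta kernel. Concretely, I would work with the Kudla--Millson--type theta function $\Theta(\tau,z,\varphi_{\KM})\,d\bar z$ valued in differential $1$-forms attached to the lattice $V = \{X \in M_2(\Zb) : N \mid A\}$ (or its appropriate dual with the genus character built in) of signature $(2,1)$, and consider the regularized integral $I(\tau,\eta) := \int_{M^*}^{\reg} \eta \wedge \Theta(\tau,z,\varphi_{\KM})\,d\bar z$ as in \eqref{eq:regI}. Since $\eta$ is a differential of the third kind with prescribed residue divisor, near each pole $\zeta$ the form $\eta$ looks like $r_\zeta(\eta)\frac{dt}{t}$ plus holomorphic terms, so the integral diverges only logarithmically near the punctures and near the cusps; the regularization is the standard one (truncate at height $T$, subtract the divergent part, let $T \to \infty$). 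The first step is therefore to set up this regularization carefully and prove that $I(\tau,\eta)$ converges and defines a real-analytic function on $\Hb$.

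The second step is the \emph{unfolding / residue computation} that identifies the geometric meaning of $I(\tau,\eta)$. Here I would split $I$ into a sum over the support of the KM theta kernel, which decomposes according to the quadratic forms $X$. For the terms indexed by $X$ with $-d\Delta > 0$ and $N \mid A$, a Stokes-theorem argument (using that $\varphi_{\KM}$ is, up to an exact form, Poincar\'e dual to the geodesic $c_X$, a fact going back to Kudla--Millson and used by Shintani) converts $\int_{M^*}^{\reg}\eta \wedge \Theta$ into the cycle integral $\int_{c(X)}\eta$ weighted by the Gaussian factor $e^{-\pi d |\Delta| v/\text{(norm)}}\cdots$; collecting these gives exactly $\Tr_{N,\Delta,d}(\eta)\,q^d$ in the limit of the holomorphic projection. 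The contributions from $X$ with $B^2-4AC < 0$ (the ``negative'' part of the lattice), together with the boundary contributions at the poles and cusps arising from the regularization, assemble into the non-holomorphic completion term $\Theta^*_\Delta(\tau,\eta)$ of \eqref{eq:Theta*}. This bookkeeping --- making sure every boundary term from Stokes near a pole $\zeta$ is matched against a term of $\Theta^*_\Delta$, and that the residues $r_\zeta(\eta)$ enter with the right normalization --- is the \textbf{main obstacle}: it is where the differential-of-the-third-kind case genuinely departs from Shintani's cuspidal setting, and where sign and normalization errors are easy to make.

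The third step is modularity in $\tau$. The theta kernel $\Theta(\tau,z,\varphi_{\KM})$ transforms as a (vector-valued) modular form of weight $3/2$ for $\Gamma_0(4)$ (resp. with the Weil representation, descending to $\Gamma_0(4N)$ after incorporating the level-$N$ lattice and the genus character $\chi_\Delta$, cf. \eqref{eq:genus}); this is Kudla--Millson's theorem and is insensitive to the regularization as long as the latter is performed $\Gamma_\tau$-equivariantly. Hence $I(\tau,\eta)$ inherits weight $3/2$ modularity on $\Gamma_0(4N)$. Applying the lowering operator $L = -2iv^2 \partial_{\bar\tau}$ kills the holomorphic part $G_{\Delta,N}$ and, by a direct computation on the Gaussian (the standard identity $L\,\Theta_{\KM} = $ (indefinite Siegel theta) up to the exact-form correction), produces precisely $\Theta_\Delta(\tau,\eta)$ of \eqref{eq:ThetaDelta}; this simultaneously proves that $\Theta^*_\Delta$ is the sought completion and that its shadow is the claimed indefinite Siegel theta function, giving Theorem \ref{thm:scalar1} as the special case $N=1$, $\eta = \eta_{}$ of \eqref{eq:fw}.

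Finally, a few supporting points need to be checked: (i) the individual Fourier coefficients $\Tr_{N,\Delta,d}(\eta)$ are finite, which follows from finiteness of $\Gamma_0(N)\backslash\Qc_{N,-d\Delta}$ and convergence of each cycle integral (the geodesic $c(X)$ avoids the poles of $\eta$ for all but finitely many classes, and those exceptional cycle integrals are still well-defined as principal values --- this is exactly why the winding numbers $\Ind(X)$ in Theorem \ref{thm:scalar1} are counted with half multiplicity when $j(c(X))$ hits $1728$ or $\infty$); (ii) the constant term $L(0,\Delta)$ in \eqref{eq:Wind_gen} emerges from the $d=0$ (resp. isotropic) part of the theta integral via the Kronecker limit formula / a Dirichlet $L$-value computation; and (iii) $G_{\Delta,N}$ is holomorphic in $\tau$ by construction, so ``real-analytic modular form whose $L$-image is $\Theta_\Delta$'' is equivalent to ``$G_{\Delta,N} + \Theta^*_\Delta$ is harmonic of weight $3/2$,'' i.e. a mixed mock modular form in the sense of \cite{DMZ12}.
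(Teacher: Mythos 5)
Your analytic core is the paper's own route: the regularized integral of $\eta$ against the weight $3/2$ theta kernel (the paper uses $\Theta(\tau,z,\varphi_\Sh)\,d\bar z$ rather than $\varphi_\KM$; note $\varphi_\KM$ is already a $1$-form, so wedging it with $d\bar z$ again does not typecheck), Stokes' theorem converting the positive-norm orbital integrals into cycle integrals plus boundary terms at the poles and cusps, and the lowering-operator identity $L_\tau\tpsi=\psi$ to identify the shadow. This is exactly Theorem \ref{thm:FE} and its proof via the singular primitive $\tpsi$ with $\overline\partial\tpsi^0=\varphi_\Sh^0\,d\bar z$.

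The genuine gap is the twisting step, which you dispose of with the phrase ``or its appropriate dual with the genus character built in.'' The genus character $\chi_\Delta$ is not a character of $\Gamma_0(N)$ nor of the discriminant group of the lattice in Example \ref{ex:N}, so one cannot simply insert $\chi_\Delta(X)$ into the theta kernel and retain modularity; this is precisely where the hypothesis $\Delta\equiv r^2\bmod 4N$ enters and where your argument has no mechanism to use it. The paper handles this by applying Theorem \ref{thm:FE} to the \emph{rescaled} lattice $L_\Delta=(\Delta L,Q/|\Delta|)$ and pulling back along the Alfes--Ehlen intertwining operator $\psi_{\Delta,r}:\Cb[L'/L]\to\Cb[L'/L_\Delta]$ of \eqref{eq:psichi}, which is an honest map of Weil representations whose matrix entries are the values $\chi_\Delta(\delta)$; only then do the scalarization map $\mathrm{sc}$ and the substitution $\tau\mapsto 4N\tau$ produce a scalar form on $\Gamma_0(4N)$. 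Without this (or an equivalent device such as Shimura's/Skoruppa's twisting identities), the claimed level $\Gamma_0(4N)$ and the appearance of $\chi_\Delta$ in every coefficient are unjustified. Two smaller inaccuracies: the completion $\Theta^*_\Delta$ does not come from the negative-norm (``$B^2-4AC<0$'') vectors --- it is an indefinite theta series built from the \emph{same} positive-discriminant forms $X\in\Qc_{N,-d\Delta}$ via the $\erfc$ boundary terms of $\tpsi$ at the poles $\zeta$ of $\eta$, plus Bernoulli-polynomial terms at the cusps; and no holomorphic projection is involved --- the split into $\Tr_{N,\Delta,d}(\eta)q^d$ and the $\erfc$ terms is an exact identity coefficient by coefficient, not a limit.
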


\begin{rmk}
Theorem~\ref{thm:scalar1} follows from Theorem~\ref{thm:scalarN} by specializing to $N=1$ and the residue divisor $[\infty]-[\sqrt{-1}]$ on the modular curve. 
\end{rmk}

Assume that the residue divisor in Theorem~\ref{thm:scalarN} is given by $[\zeta_1]-[\zeta_2]$, the difference of two points in $M^*$. Then the theta lift $I(\tau, \eta)$ is closely related to the indefinite theta functions $\vartheta^{c_1, c_2}$ constructed by Zwegers \cite{ZwThesis} and Kudla and the second author \cite{FK17} associated to two (arbitrary) preimages $c_1$ and $c_2$ of $[\zeta_1]$ and $[\zeta_2]$ in the extended upper half plane $\Hb^*$ under the quotient map $\Hb^* \to M^*$. Explicitly, $\vartheta^{c_1, c_2}$ can be realized by integrating the Kudla-Millson theta form over any curve in $\Hb^*$ connecting $c_2$ with $c_1$. Then the two non-holomorphic forms $I(\tau, \eta)$ and $\vartheta^{c_1, c_2}$ differ by a holomorphic cusp form, more precisely by a (cohomological) Shintani lift. In particular, $I(\tau, \eta)$ and  $\vartheta^{c_1, c_2}$ coincide if the genus of $M^*$ is zero.

The paper is organized as follows. First, we recall the basic setup in Section \ref{sec:setups}. Then we will prove the main result (Theorem \ref{thm:FE}), and deduce Theorem \ref{thm:scalarN} from it. After that we relate the integral $I(\tau, \eta)$ to the indefinite theta function of Zwegers and Funke-Kudla. Finally in the last section, we specialize Theorem \ref{thm:FE} to the case of a Shimura curve and give an example related to Ramanujan's mock theta function.
\vspace{.2in}

\noindent \textbf{Acknowledgement.} Most of this work was done during visits by J.F. to TU Darmstadt, by J.F. and Y.L. to FIM in Z\"{u}rich in 2017. We thank these organizations for their supports. J.B and Y.L. are supported by the DFG grant BR-2163/4-2. Y.L. is also supported by an NSF postdoctoral fellowship.

\section{Setup}
\label{sec:setups}
\subsection{Modular Curve.}
\label{subsec:modcurve}
Let $B$ be an indefinite quaternion algebra over $\Qb$ and let $V = B^0 \subset B$ be the subspace of elements of trace zero.
For $N$ a positive rational number, there is a quadratic form $Q(\alpha):= -N \Nm(\alpha)$ on $V$, where $\Nm$ is the reduced norm on $B$. 
This turns $V$ into a quadratic space of signature $(2, 1)$ and $G = \Spin(V)$ is an algebraic group over $\Qb$. Denote by $\bar{G}$ the image of $G$ in $\SO(V)$.

Since $B$ is indefinite, the real points $V_\Rb := V \otimes_\Qb \Rb$ of $V$ are isomorphic to 
$$
M_2(\Rb)^0 := \left\{
\pmat{x_1}{x_2}{x_3}{-x_1}: x_1, x_2, x_3 \in \Rb
\right\},
$$
where the norm on $B$ is identified with the determinant and the action of $\Spin(V)(\Rb)$ becomes that of $\SL_2(\Rb)$ via 
$$
g\cdot X := g X g^{-1},
$$
for $X \in M_2(\Rb)^0$. We can identify the symmetric space associated to $V$ with the Grassmannian of oriented negative lines in $V_\Rb$, and we denote by $\Dc$ its connected component containing the line spanned by $ \frac{1}{\sqrt{N}}\smat{}{1}{-1}{}$. Then $\Dc$ can be identified with the upper half-plane $\Hb$ as follows. For $z = x + iy \in \Hb$, let $g_z = \smat{1}{x}{}{1} \smat{y^{1/2}}{}{}{y^{-1/2}} \in \SL_2(\Rb)$ be the element such that $g_z \cdot i = z$. 
Then $X(z) := \frac{1}{\sqrt{N}} g_z \cdot \smat{}{1}{-1}{} = \frac{1}{\sqrt{N} y} \smat{-x}{|z|^2}{-1}{x}$ spans a negative line in $V_\Rb$. It is easy to check that $\gamma \cdot X(z) = X(\gamma z)$.
Furthermore, we denote
\begin{align}
  W(z) &:= \partial_\zbar X(z) = \frac{1}{2 i \sqrt{N} y^2} \pmat{-z}{z^2}{-1}{z}; \\
R(X, z) &:=  (X, X) + \frac{1}{2} (X, X(z))^2.
\end{align}
Let $(\cdot, \cdot)_z$ be the majorant associated to $z$; a small calculation gives
\begin{equation}
  \label{eq:majorant}
    (X, X)_z = (X, X) + (X, X(z))^2.  
\end{equation}
Note that the quantity $(X, X(z))$ never vanishes for nonzero $X$ when $Q(X) \leq 0$.

For $L \subset V$ an even, integral lattice with quadratic form $Q$ and dual lattice $L'$, let $\Gamma_L \subset G(\Qb)$ be the discriminant kernel and $\Gamma \subset \Gamma_L \cap \SO^+(L)$ the connected component that fixes the component $\Dc$.
The quotient $M = M_L := \Gamma \backslash \Dc$ is a modular curve or Shimura curve. The latter occurs exactly when  
$B$ does not split over $\Qb$, that is, $\Iso(V)$, the set of rational isotropic lines in $V$, is empty. If $M$ is non-compact, then $M$ can be compactified by adding $\Gamma \backslash \Iso(V)$. In that case we denote $\Dc^* := \Dc \cup \Iso(V)$ and $M^* = M^*_L := \Gamma \backslash \Dc^*$.

\begin{exmp}
\label{ex:N}
Suppose that $V(\Qb) = M_2(\Qb)^0$. Then $\SL_2 \cong \Spin(V)$ as algebraic groups over $\Qb$. There is also a bijection between $\Pb^1(\Qb)$  and $\Iso(V)$ by sending $(\alpha:\beta)$ to the line spanned by $\smat{-\alpha \beta}{\alpha^2}{-\beta^2}{\alpha \beta}$.
This identifies $\Dc^*$ with $\Hb^* := \Hb \cup \Pb^1(\Qb)$, where the action of $\SO(V)$ on $\Dc^*$ becomes the linear fractional transformation on $\Hb^*$.
Consider the lattice
$$
L := \left\{
\smat{-B}{-C/N}{A}{B} \in V : A, B, C \in \Zb
\right\} \subset V.
$$
Its dual lattice is given by 
$$
L' := \left\{
\pmat{-B/2N}{-C/N}{A}{B/2N} \in V : A, B, C \in \Zb
\right\}.
$$
Then $\Gamma=\Gamma_0(N)$ and $\SO^+(L)$ is the extension of $\Gamma_0(N)$ by all Atkin-Lehner involutions, see Section 2.4 of \cite{BO10}.

\end{exmp}

\subsection{Cusps and Fundamental Domains.}
\label{subsec:cusp}
Suppose that $V$ splits over $\Qb$, i.e.\ $V(\Qb) = M_2(\Qb)^0$.
Let $\ell_0 \in \Iso(V)$ be the line spanned by $u_0 = \smat{0}{1}{0}{0}$, which corresponds to the cusp $i \infty$. 
For any $\ell \in \Iso(V)$, let $\sigma_\ell \in \SL_2(\Zb)$ be an element such that $\ell = \sigma_\ell \ell_0$. Set $u_\ell := \sigma_\ell^{} u_0$. If $\Gamma_\ell \subset \Gamma$ is the stabilizer of $\ell$, then 
\begin{equation}
\label{eq:sigmal}
\sigma_\ell^{-1} {\Gamma}_{\ell} \sigma_\ell = \left\{ \pm \pmat{1}{k\alpha_\ell}{}{1}: k \in \Zb \right\}
\end{equation}
with $\alpha_\ell \in \Zb_{>0}$ the width of the cusp $\ell$. Choose $\beta_\ell \in \Qb_{>0}$ such that $\beta_\ell u_\ell$ is a primitive element of $\ell \cap L$, and write $\varepsilon_\ell := \alpha_\ell/\beta_\ell$.
For $h \in L'/L$, we can also define $h_\ell \in \Qb_{\ge 0}$ to be the quantity that satisfies $0 \le h_\ell < \beta_\ell$ and $\ell \cap (L + h) = (\Zb\beta_\ell + h_\ell) u_\ell$.
We use $[\ell]$ to represent the class of $\ell$ in the finite set $\Gamma \backslash \Iso(V)$.
Note that $\alpha_\ell, \beta_\ell, \varepsilon_\ell$ and $h_\ell$ are all class invariants.

For $\epsilon > 0$ and $z \in M$, let $B_\epsilon(z)$ be the open $\epsilon$-neighborhood with respect to the hyperbolic metric. 
For $\ell \in \Iso(V)$, we also have an open neighborhood $B_\epsilon(\ell)$ near $\ell$ given by $q_\ell^{-1}(U_{\epsilon^{1/\alpha_\ell}})$, where $U_r :=\{w \in \Cb: |w| < r \}$ and $q_\ell(z) = \ebf(\sigma_\ell^{-1} z / \alpha_\ell)$.
These charts turns $M^* := \Gamma \backslash \Dc^*$ into a compact Riemann surface.

For a finite subset $\mds = \mds^\circ \cup \mds_\infty \subset M^*$ with $\mds^\circ \subset M$ and $\mds_\infty \subset \Gamma \backslash \Iso(V)$, let $M^*_\mds$ be the Riemann surface with the points in $\mds$ removed. 
Suppose $\epsilon > 0$ is sufficiently small such that $B_\epsilon(\zeta)$ are all disjoint for $\zeta \in \mds$. Then
\begin{equation}
  \label{eq:MSe}
M^*_{\mds, \epsilon} := M^* \backslash \lp \bigsqcup_{\zeta \in \mds} B_\epsilon(\zeta) \rp.
\end{equation}
is a compact Riemann surface with boundary $ \partial M^*_{\mds, \epsilon} = - \bigsqcup_{\zeta \in \mds} \partial B_\epsilon(\zeta) $.

Suppose $S = S_\infty = \Gamma \backslash \Iso(V) \subset M^*$, then $M = M^*_S$ and the fundamental domain of $M^*_{S, \epsilon}$ can be described as follows.
For $T > 1$, let $\Fc_T := \{ z \in \Hb: |\Re(z)| \le 1/2 \text{ and } T \ge |z| \ge 1\}$ be the standard, truncated fundamental domain of $\SL_2(\Zb)$.
For $\alpha \in \Nb$, denote 
$$
\Fc^\alpha_T := \bigcup_{j = 0}^{\alpha - 1} \pmat{1}{j}{}{1} \Fc_T.
$$
Then for $\epsilon > 0$ sufficiently small, the fundamental domain of $M_{S, \epsilon}^*$ is given by
$$
\Fc_T(\Gamma) := \bigcup_{\ell \in \Gamma \backslash \Iso(V)} \sigma_\ell \Fc_{T}^{\alpha_\ell}
$$
where $T = - \log \epsilon/2\pi$.

\subsection{Differentials of the third kind.}

\label{subsec:differential}
Let $X$ be a connected, non-singular projective curve over $\Cb$ of genus $g$.
A \textit{differential form of the third kind} is a meromorphic differential 1-form $\eta$ on $X$ with at most simple poles and residues in $\Qb$. 
Denote the set of its singularities by
\begin{equation}
  \label{eq:Sing}
  \mds(\eta) := \{ \zeta \in X: \eta \text{ has a simple pole at }\zeta \},
\end{equation}
and the residue of $\eta$ at $z \in X$ by $r_z(\eta)$. 
Given a real-analytic function $h$ with no poles in the $\epsilon_0$-neighborhood $B_{\epsilon_0}(z)$ for sufficiently small $\epsilon_0 > 0$, we have
\begin{equation}
  \label{eq:Cauchy}
  h(z) r_z(\eta) = \frac{1}{2\pi i} \lim_{\epsilon \to 0} \int_{\partial B_\epsilon(z)} h  \eta.
\end{equation}
This is true for all $\epsilon > 0$ sufficiently small when $h$ is holomorphic.
The \textit{residue divisor} of $\eta$ is defined by
\begin{equation}
  \label{eq:div}
\res(\eta) := \sum_{\zeta \in \mds(\eta)} r_\zeta(\eta) \cdot [\zeta] \in \Div^0(X)
\end{equation}
and has degree zero by the residue theorem.
Conversely, for any divisor $\DD \in \Div^0(X)$, there is a differential of the third kind $\eta_\DD$ such that $\res(\eta_\DD) = \DD$ \cite{Griffiths89, BO10}.
Furthermore, such a differential is uniquely determined up to addition of holomorphic differentials.
In fact, by the Riemann period relations, there exists a \textit{unique} differential of the third kind $\eta_\DD$ for each $\DD \in \Div^0(X)$ such that
$$
\Re \int_{\gamma } \eta_\DD = 0
$$
for every smooth curve $\gamma \subset X \backslash \DD$.
We call $\eta_\DD$ the \textit{canonical differential of the third kind}.

For a finite set $S \subset X$, denote $X' := X \backslash S$. The homology group $H_1(X', \Zb)$ is a finitely generated free abelian group, and there is a canonical surjective homomorphism
\begin{equation}
\label{eq:surj}
\pi: H_1(X', \Zb) \twoheadrightarrow H_1(X, \Zb)
\end{equation}
induced by $X' \hookrightarrow X$. The kernel is isomorphic to $\Zb^{|S|}$ and generated by the class of $\partial B_{\epsilon}(\zeta)$ in $H_1(X', \Zb)$ for each $\zeta \in S$ and $\epsilon > 0$ sufficiently small. We denote these classes by $\mathbf{c}_\zeta \in H_1(X', \Zb)$.
For each class $\mathbf{c}' \in \ker(\pi)$, we can define the winding number of $\mathbf{c}'$ around $\zeta \in S$ to be the integer $\ind(\mathbf{c}'; \zeta)$ satisfying
\begin{equation}
  \label{eq:Wind}
  \frac{1}{2\pi i} \int_{\mathbf{c}'} \eta = \sum_{\zeta \in S} \ind(\mathbf{c}' ; \zeta) \cdot r_\zeta(\eta),  
\end{equation}
for any differential of the third kind $\eta$ satisfying $S(\eta) \subset S$.

To define the winding number of any $\mathbf{c}' \in H_1(X', \Zb)$, we need to fix a section $s$ of the surjective homomorphism \eqref{eq:surj}.
Then we can define $\ind_s(\mathbf{c}'; \zeta)$ for any $\mathbf{c}' \in H_1(X', \Zb)$ and $\zeta \in S$ by
\begin{equation}
  \label{eq:Wind2}
  \ind_s(\mathbf{c}'; \zeta) := \ind(\mathbf{c}' - s \circ \pi(\mathbf{c}'); \zeta).
\end{equation}
This quantity clearly depends on the section $s$ and satisfies
\begin{equation}
  \label{eq:homology}
  \mathbf{c}' - s \circ \pi( \mathbf{c}') = \sum_{\zeta \in S} \ind_s(\mathbf{c}'; \zeta) \mathbf{c}_\zeta.
\end{equation}
in $H_1(X', \Zb)$.

For $\zeta_1, \zeta_2 \in S$, let $\gamma \subset X$ be any curve going from $\zeta_1$ to $\zeta_2$ and $\Ib(\gamma, \mathbf{c}')$ the signed intersection of $\gamma$ with any $\mathbf{c}' \in H_1(X', \Zb)$. 
It can be defined as the integral of the Poincar\'{e} dual of $\mathbf{c}' \in H_1(X', \Zb)$ along $\gamma$.
It is clear that $\Ib(\gamma, \mathbf{c}')$ only depends on the class of $\gamma$ in $H_1(X', \partial X', \Zb)$.
From the fundamental polygon of a Riemann surface, we can see that for every section $s$ and pairs of points $\zeta_1, \zeta_2 \in S$, there exists a curve $\gamma_s$ such that $\Ib(\gamma_s, s(\mathbf{c})) = 0$ for all $\mathbf{c} \in H_1(X, \Zb)$.

For each differential $\eta$ of the third kind with $S(\eta) \subset S$, there exists (by Poincar\'{e} duality) a closed differential $\omega_{\eta, s}$ on $X$ unique in $H^1_{\mathrm{dR}}(X)$ such that
\begin{equation}
\label{eq:omega_eta}
\int_{s(\mathbf{c})} \eta = \int_{\mathbf{c}} \omega_{\eta, s}
\end{equation}
 for all $\mathbf{c} \in H_1(X, \Zb)$.
For any $\mathbf{c}' \in H_1(X', \Zb)$, we have
\begin{equation}
  \label{eq:Wind3}
  \frac{1}{2\pi i} \int_{\mathbf{c}'} \eta - \omega_{\eta, s} = \sum_{\zeta \in S} 
\ind_s(\mathbf{c}'; \zeta) \cdot r_\zeta(\eta).
\end{equation}
Using  \eqref{eq:homology}, we can relate the intersection number to this integral for certain $\eta$.
\begin{lemma}
  \label{lemma:intersection}
Suppose $\res(\eta) = \zeta_1 - \zeta_2$. 
In the notation above, we have
\begin{equation}
  \label{eq:Wind_intersection}
  \frac{1}{2\pi i} \int_{\mathbf{c}'} \eta - \omega_{\eta, s} = 
-\Ib(\gamma_s, \mathbf{c}')
\end{equation}
for any $\mathbf{c}' \in H_1(X', \Zb)$. 
\end{lemma}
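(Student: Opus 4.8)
The plan is to combine the two defining relations \eqref{eq:Wind3} and \eqref{eq:homology} with the special feature of the residue divisor $\res(\eta) = \zeta_1 - \zeta_2$, namely that $r_{\zeta_1}(\eta) = 1$, $r_{\zeta_2}(\eta) = -1$, and $r_\zeta(\eta) = 0$ for all other $\zeta \in S$. Plugging this into the right-hand side of \eqref{eq:Wind3} collapses the sum over $S$ to
\[
\frac{1}{2\pi i}\int_{\mathbf{c}'} \eta - \omega_{\eta, s} = \ind_s(\mathbf{c}'; \zeta_1) - \ind_s(\mathbf{c}'; \zeta_2).
\]
So it suffices to identify the right-hand side with $-\Ib(\gamma_s, \mathbf{c}')$.

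First I would reduce to the case $\mathbf{c}' \in \ker(\pi)$. Write $\mathbf{c}' = s\circ\pi(\mathbf{c}') + \mathbf{c}''$ where $\mathbf{c}'' := \mathbf{c}' - s\circ\pi(\mathbf{c}') \in \ker(\pi)$. By \eqref{eq:homology}, $\mathbf{c}'' = \sum_{\zeta}\ind_s(\mathbf{c}';\zeta)\mathbf{c}_\zeta$. On the intersection side, $\Ib(\gamma_s, \mathbf{c}') = \Ib(\gamma_s, s\circ\pi(\mathbf{c}')) + \Ib(\gamma_s, \mathbf{c}'')$, and the first term vanishes by the defining property of $\gamma_s$ (namely $\Ib(\gamma_s, s(\mathbf{c})) = 0$ for all $\mathbf{c}\in H_1(X,\Zb)$). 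Meanwhile $\ind_s(\mathbf{c}';\zeta) = \ind_s(\mathbf{c}'';\zeta)$ by \eqref{eq:Wind2} since $\pi(\mathbf{c}'') = 0$. Hence both sides of \eqref{eq:Wind_intersection} depend on $\mathbf{c}'$ only through $\mathbf{c}''$, and we may assume $\mathbf{c}' \in \ker(\pi)$ from the start, i.e. $\mathbf{c}' = \sum_\zeta n_\zeta \mathbf{c}_\zeta$ with $n_\zeta = \ind_s(\mathbf{c}';\zeta)\in\Zb$.

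Now the identity reduces, by bilinearity of the intersection pairing, to the single computation $\Ib(\gamma_s, \mathbf{c}_\zeta) = \delta_{\zeta, \zeta_2} - \delta_{\zeta, \zeta_1}$ for $\zeta \in S$. This is the geometric heart: $\gamma_s$ runs from $\zeta_1$ to $\zeta_2$, while $\mathbf{c}_\zeta$ is a small positively oriented loop around $\zeta$. If $\zeta$ is neither endpoint, $\gamma_s$ can be taken to avoid $B_\epsilon(\zeta)$ entirely, so the intersection is $0$. If $\zeta = \zeta_1$ or $\zeta = \zeta_2$, then $\gamma_s$ enters (or leaves) the small disc around $\zeta$ exactly once, crossing $\partial B_\epsilon(\zeta)$ transversally in a single point, and the sign is dictated by the orientation conventions: the outgoing endpoint $\zeta_1$ contributes $-1$ and the incoming endpoint $\zeta_2$ contributes $+1$ (with our orientation $\partial M^*_{\mds,\epsilon} = -\bigsqcup_\zeta \partial B_\epsilon(\zeta)$ and $\mathbf{c}_\zeta = [\partial B_\epsilon(\zeta)]$). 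Substituting back,
\[
-\Ib(\gamma_s, \mathbf{c}') = -\sum_\zeta n_\zeta\, \Ib(\gamma_s, \mathbf{c}_\zeta) = -\bigl(n_{\zeta_2} - n_{\zeta_1}\bigr) = \ind_s(\mathbf{c}';\zeta_1) - \ind_s(\mathbf{c}';\zeta_2),
\]
which matches the collapsed right-hand side of \eqref{eq:Wind3} and proves \eqref{eq:Wind_intersection}.

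The main obstacle is purely one of bookkeeping: pinning down the sign in $\Ib(\gamma_s, \mathbf{c}_\zeta)$ at the endpoints so that it agrees with the orientation conventions fixed earlier (the sign on $\partial B_\epsilon(\zeta)$, the direction of $\gamma_s$ from $\zeta_1$ to $\zeta_2$, and the convention defining $\Ib$ as integration of the Poincaré dual). Once a small local picture near each endpoint is drawn, this is routine; the rest of the argument is a direct substitution into \eqref{eq:Wind3} using the explicit residues of $\eta$.
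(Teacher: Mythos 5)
Your argument is correct and is exactly the one the paper intends but leaves implicit (the text before the lemma only says ``Using \eqref{eq:homology}\dots''): collapse \eqref{eq:Wind3} using $r_{\zeta_1}(\eta)=1$, $r_{\zeta_2}(\eta)=-1$, split off $s\circ\pi(\mathbf{c}')$ which is killed by the defining property of $\gamma_s$, and reduce to the local count $\Ib(\gamma_s,\mathbf{c}_\zeta)=\delta_{\zeta,\zeta_2}-\delta_{\zeta,\zeta_1}$ at the two endpoints. The sign bookkeeping you flag is genuinely all that remains, and your choice is the one consistent with the paper's orientation convention $\partial M^*_{\mds,\epsilon}=-\bigsqcup_\zeta \partial B_\epsilon(\zeta)$ (as one can confirm on the model $\eta=dz/z$ on $\Pb^1$ with $\gamma_s$ the positive real axis).
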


\subsection{Geodesics and Cycle Integrals}
\label{subsec:cycle_int}
An element $X \in V(\Qb)$ of positive norm $m$ defines a geodesic $c_X \subset \Dc$ via
$$
c_X := \{ z \in \Dc: z \perp X\} = \{ z \in \Dc: (X, X(z)) = 0\}.
$$
Suppose $X = \smat{-B/2}{-C}{A}{B/2} $. Then $c_X$ is explicitly given by $ \{z \in \Hb^*: A|z|^2 + B\Re(z) + C = 0 \}$. The geodesics carry a natural orientation gives as follows. The semicircle $c_X$ is oriented counter-clockwise if $A> 0$ and clockwise if $A<0$. The stabilizer $\bar{\Gamma}_X \subset \bar{\Gamma}$ of $X$ is either infinite cyclic or trivial. The latter happens if and only if $\sqrt{m/N} \in \Qb^\times$.
 In that case, we call $X$ \textit{split-hyperbolic}.
The orthogonal complement $X$ in $V$ is spanned by two rational isotropic lines $\ell_X, \tilde{\ell}_X$ and $c_X$ connects the two corresponding cusps of $\Dc$.
We distinguish them by requiring $\ell_X$ to be the endpoint of $c_X$.
Since the action of $\Gamma$ preserves the orientation of $c_X$, we have 
$\ell_{\gamma \cdot X} = \gamma \cdot \ell_X$ for all $\gamma \in \Gamma$.
Note that $\tilde{\ell}_X = \ell_{-X}$.
Choose $\sigma_{\ell_X} \in \SL_2(\Zb)$ such that
\begin{equation}
\label{eq:sigmaX}
\sigma^{-1}_{\ell_X} X = \sqrt{m/N} \pmat{1}{-2r_{X}}{}{-1}
\end{equation}
for a unique $r_{X} \in \Qb \cap [-\alpha_{\ell_X}/2, \alpha_{\ell_X}/2)$. We call $r_X$ the real part of $c_X$, since the geodesic $c_X$ is given by
$
c_X = \sigma_{\ell_X} \{ z \in \Dc: \Re(z) = r_{X}\}.
$
We denote the image of $c_X$ on $M^*$ by $c(X)$, which is a closed geodesic cycle when $X$ is not split-hyperbolic.
For $S \subset M^*$ finite and  $\epsilon > 0$ sufficiently small, we write 
$$
c(X; \mds,  \epsilon) := c(X) \cap M^*_{\mds, \epsilon}
$$
which is a compact subset of $M^*_\mds$.
For convenience, we also set
 \begin{equation}
   \label{eq:iota}
\iota(X) := 
\begin{cases}
  1, & X \text{ is split-hyperbolic,} \\
0, & \text{ otherwise.}
\end{cases}
 \end{equation}

Let $\eta$ be a differential form of the third kind on $M^*$ and $\mds := \mds(\eta) \cup (\Gamma \backslash \Iso(V))$.
Then we can define
\begin{equation}
  \label{eq:cycle_int}
\int_{c(X)} \eta := \text{p.v. } \lim_{\epsilon \to 0} 
\lp
 \int_{c(X; \mds, \epsilon)} \eta  
-
 \iota(X) ( r_{\ell_X}(\eta) - r_{\ell_{-X}}(\eta))  \frac{ \log \epsilon}{2\pi i}
  \rp,
\end{equation}
where we take the Cauchy principal value of the limit. It exists by the proposition below. 
\begin{prop}
  The right hand side of  \eqref{eq:cycle_int} exists.
\end{prop}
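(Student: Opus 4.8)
The plan is to show that the two potentially divergent contributions to the right-hand side of \eqref{eq:cycle_int}---the divergence of the curvilinear integral $\int_{c(X;\mds,\epsilon)} \eta$ as $\epsilon\to 0$, and the explicit $\log\epsilon$ counterterm---cancel exactly, so that the principal-value limit exists. The analysis splits according to whether $X$ is split-hyperbolic.

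First I would treat the non-split-hyperbolic case, where $\iota(X)=0$ and $c(X)$ is a compact closed geodesic cycle. Here $c(X)\subset M^*$ is compact, so the only way $\int_{c(X;\mds,\epsilon)}\eta$ can fail to converge is if $c(X)$ passes through one of the poles $\zeta\in\mds(\eta)$ of $\eta$. Near such a point choose a local coordinate $w$ with $w(\zeta)=0$; since $\eta$ has at most a simple pole, $\eta = \big(r_\zeta(\eta)/w + (\text{holomorphic})\big)\,dw$, and along a geodesic passing transversally through $\zeta$ the arc $c(X)\cap B_\epsilon(\zeta)$ is, to leading order, a diameter of the coordinate disc. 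The principal value of $\int \frac{dw}{w}$ over such a symmetric diameter vanishes (the logarithmic singularities from the two sides cancel), and the holomorphic remainder integrates to $O(\epsilon)$. Hence $\text{p.v.}\lim_{\epsilon\to 0}\int_{c(X;\mds,\epsilon)}\eta$ exists. (If $c(X)$ meets $\mds(\eta)$ tangentially one argues similarly with the local shape of the geodesic; alternatively one may perturb the section/curve, but the transversal case is the essential one.)

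Next, the split-hyperbolic case, where $\bar\Gamma_X$ is trivial, $c_X$ is an arc (not a cycle) running from the cusp $\ell_{-X}$ to the cusp $\ell_X$, and $c(X)$ is non-compact with its two ends escaping into the cusp neighborhoods that were removed to form $M^*_{\mds,\epsilon}$. On the chart $q_\ell(z)=\ebf(\sigma_\ell^{-1}z/\alpha_\ell)$ near a cusp $\ell$, the removed neighborhood $B_\epsilon(\ell)$ corresponds to $\Im(\sigma_\ell^{-1} z) > T = -\log\epsilon/2\pi$. Using \eqref{eq:sigmaX}, the geodesic $c_X$ near $\ell_X$ is the vertical line $\Re(\sigma_{\ell_X}^{-1} z)=r_X$, so $c(X;\mds,\epsilon)$ near that end is the segment $\{r_X + it : 1 \le t \le T\}$ (after scaling by the width). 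Writing $\eta$ in the cusp chart: a differential of the third kind with residue $r_\ell(\eta)$ at the cusp has the local expansion, in the variable $q_\ell$, equal to $r_\ell(\eta)\,\frac{dq_\ell}{q_\ell} + (\text{holomorphic in }q_\ell)\,dq_\ell$, i.e. in the $z$-coordinate $\eta = \big(\tfrac{2\pi i}{\alpha_{\ell}} r_\ell(\eta) + O(e^{-2\pi y/\alpha_\ell})\big)\,dz$. Integrating the constant term along $\{r_X + it: 1\le t\le T\}$ produces $\tfrac{2\pi i}{\alpha_{\ell}} r_\ell(\eta)\cdot i(T-1)$, and converting $T=-\log\epsilon/2\pi$ one gets a term proportional to $r_{\ell_X}(\eta)\log\epsilon$; the exponentially small remainder converges as $T\to\infty$. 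Doing the same at the other end (cusp $\ell_{-X}$, with orientation reversed so the residue enters with a sign) yields the combined divergence $(r_{\ell_X}(\eta)-r_{\ell_{-X}}(\eta))\,\tfrac{\log\epsilon}{2\pi i} + O(1)$, which is precisely the counterterm subtracted in \eqref{eq:cycle_int}. Hence the difference has a finite limit; any remaining finite oscillation (from a pole of $\eta$ on the compact part of $c_X$, handled as in the previous paragraph) is absorbed by the principal value.

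The main obstacle is the careful bookkeeping in the split-hyperbolic case: one must pin down the width normalization $\alpha_{\ell_X}$ versus the lattice parameters $\beta_\ell,\varepsilon_\ell$ from Section~\ref{subsec:cusp}, track the orientation of $c_X$ at its two ends (so the residues $r_{\ell_X}(\eta)$ and $r_{\ell_{-X}}(\eta)$ enter with the correct signs matching \eqref{eq:cycle_int}), and verify that after subtracting the explicit $\log\epsilon$ term the residual term is genuinely $O(1)$ rather than $O(\log\log)$ or oscillatory without a principal value. Once the local expansion of $\eta$ in each cusp chart and the explicit form $\Re(\sigma_{\ell_X}^{-1}z)=r_X$ of the geodesic are in hand, this is a direct computation; the non-compact ends are where all the subtlety lies, the interior poles being routine.
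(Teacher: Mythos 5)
Your proposal is correct and follows essentially the same route as the paper: interior poles of $\eta$ on $c(X)$ are handled by the cancellation of the logarithmic singularities under the symmetric excision (equivalent to the paper's formulation as the average of the two deformations of $c(X)$ around the pole along $\partial B_\epsilon(\zeta)$), and the cusp ends in the split-hyperbolic case are handled by the local expansion of $\eta$, whose constant term reproduces exactly the $\log\epsilon$ counterterm. One small caveat on the bookkeeping you flag: the paper normalizes $r_\ell(\eta)$ at a cusp so that $\eta=(r_\ell(\eta)+O(e^{-Cy_\ell}))\,dz_\ell$ in the coordinate $z_\ell=\sigma_\ell^{-1}z$, so the factor $2\pi i/\alpha_\ell$ in your cusp expansion should not appear; with that convention the divergence is $r_{\ell_{\pm X}}(\eta)\,\log\epsilon/(2\pi i)$ at each end and matches \eqref{eq:cycle_int} on the nose.
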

\begin{proof}
When $\iota(X) = 0$, $c(X) \subset M$ is a closed geodesic. 
If $\mds \cap c(X) = \emptyset$, then the limit clearly exists.
Otherwise, we can understand the right hand side as the follows.
Suppose $\zeta \in \mds^\circ(\eta) \cap c(X) \subset M$, then for any $\epsilon > 0$,  $c(X) \cap \partial B_\epsilon(\zeta)$ has two points, which can be connected along $\partial B_\epsilon(\zeta)$ in two ways. We can keep $c(X)$ closed by deforming it near $\zeta$ in these two ways.
Each will yield a value as $\epsilon  \to 0$. The Cauchy principal value of $\lim_{\epsilon \to 0} \int_{c(X; \mds(\eta), \epsilon) \cap B_{\epsilon_0}(\zeta)} \eta$ is then the average of these two values.
Here $\epsilon_0$ is chosen small enough such that the $B_{\epsilon_0}(z)$'s are disjoint for $z \in\mds$.

When $\iota(X) = 1$, the geodesic $c(X)$ connects two cusps $\ell = \ell_X$ and $\ell_{-X}$ on $M^*$. 
Near $\zeta \in S^\circ(\eta) \cap c(X)$, the same argument for $\iota(X) = 0$ works.
Near the cusp $\ell$, we have $\eta = ( r_\ell(\eta) + O(e^{-C y_\ell})) dz_\ell$ in the local coordinate $z_\ell = x_\ell + i y_\ell =  \sigma_\ell^{-1} z$ by the definition of $r_\ell(\eta)$.
Therefore, we have
$$
\int_{c(X) \cap (B_{\epsilon_0}(\ell) - B_\epsilon(\ell))} \eta = r_\ell(\eta) \int^{-\log \epsilon /2\pi}_{-\log \epsilon_0/2\pi} i dy_\ell + O(\epsilon)
=  r_\ell(\eta) \frac{\log \epsilon}{2\pi i} + O(\epsilon)
$$
near the cusp $\ell = \ell_X$. The same argument with the reversed orientation also works for $\ell_{-X}$ and finishes the proof.
\end{proof}

\section{Theta series and the main result}

\subsection{Weil representation and modular forms.}
\label{subsec:Weilrep}

Let ${\Mp}_2(\Rb)$ be the metaplectic two-fold cover of $\SL_2(\Rb)$ consisting of elements $(M,  \phi(\tau))$ with $ M = \smat{a}{b}{c}{d} \in \SL_2(\Rb)$ and $\phi: \Hb \to \Cb$ a holomorphic function satisfying $\phi(\tau)^2 = c\tau + d$. 
Let $\Gamma' \subset {\Mp}_2(\Rb)$ be the inverse image of $\SL_2(\Zb) \subset \SL_2(\Rb)$ under the covering map. 
It is generated by $T := (\smat{1}{1}{0}{1}, 1)$ and $S = (\smat{0}{-1}{1}{0}, \sqrt{\tau})$, where $\sqrt{\cdot}$ denotes the principal branch of the holomorphic square root.

Let  $L$ be an even lattice with quadratic form $Q$ and associated bilinear form $(\cdot, \cdot)$ and dual lattice $L'$ of signature $(b^+,b^-)$. The (finite) Weil representation $\rho_L$  of $\Gamma'$ associated to $L$ acts on  $\Cb[L'/L]$ (see e.g., \cite{Shintani75,Borcherds98}) and is given by 
\begin{equation}
  \label{eq:Weil_rep}
    \rho_L(T)(\ef_h) := \ebf(Q(h)) \ef_h,  \qquad
\rho_L(S)(\ef_h) := \frac{\ebf(-(b^+ - b^-)/8)}{\sqrt{|L'/L|}} \sum_{\mu \in L'/L} \ebf(-(h, \mu)) \ef_\mu.
\end{equation}
Here we denote the standard basis of $\Cb[L'/L]$ by $\{\ef_h: h \in L'/L\}$, Furthermore, we write $\langle \cdot, \cdot \rangle$ for the standard hermitian scalar product on $\Cb[L'/L]$. Note that if $L = L_1 \oplus L_2$ then 
 $\Cb[L'/L]$ can be identified with $\Cb[L_1'/L_1] \otimes \Cb[L_2'/L_2]$ by sending $\ef_{(h_1, h_2)}$ to $\ef_{h_1} \otimes \ef_{h_2}$ for $(h_1, h_2) \in L'_1/L_1 \oplus L'_2/L_2 = L'/L$, and we have $\rho_{L} = \rho_{L_1} \otimes \rho_{L_2}$.

 A function $f: \Hb \to \Cb[L'/L] $ is called \textit{modular} with weight $k \in \half \Zb$, representation $\rho_L$ and level $\Gamma \subset \Gamma'$ if
\begin{equation}
  \label{eq:mod}
(  f \mid_k (M, \phi))(\tau) := \phi(\tau)^{-2k} f (M \cdot \tau) = \rho_L((M, \phi)) f(\tau)
\end{equation}
for all $(M, \phi(\tau)) \in \Gamma$. Let $\Ac_k(\Gamma, \rho_L)$ denote the space of such functions that are real-analytic. It contains the usual subspaces $M_k(\Gamma, \rho_L), S_k(\Gamma, \rho_L)$ of holomorphic modular and cusp forms respectively, and also $H_k(\Gamma, \rho_L)$, the space of harmonic (weak) Maass forms, see \cite{BF04}. 
We drop $\Gamma$, resp.\ $\rho_L$, if it is $\Gamma'$, resp.\ trivial.
In case $\Gamma \subset \Mp_2(\Rb)$ is the double cover of $\Gamma'' \subset \SL_2(\Zb)$, we also write $\Ac_k(\Gamma'', \rho_L)$ for $\Ac_k(\Gamma, \rho_L)$. The same notation holds for any subspace.

In \cite{Za09}, Zagier defined the notion of a \textit{mock modular form} following Zwegers \cite{ZwThesis}. 
For a cusp form $g(\tau)  \in S_{2-k}({\overline{\rho_L}})$, define
\begin{equation}
  \label{eq:Eichler}
  g^*(\tau) := \frac{i}{2} \int^{i \infty}_{- \overline{\tau}} \frac{\overline{g({-\zbar})}}{(-i(z + \tau)/2)^{k}} dz.
\end{equation}
A holomorphic function $f: \Hb \to \Cb[L'/L]$ is called \textit{mock modular} of weight $k$ with respect to $\rho_L$ if there exists a $g \in S_{2-k}(\overline{\rho}_L)$ such that $\tilde{f} := f + g^* \in H_{k}(\rho_L)$. The form $g$ is called the \textit{shadow} of $f$. The shadow $g$ is related to $f + g^*$ via the operator $\xi_k := 2iv^k \overline{\partial_{\overline{\tau}}}$ in \cite{BF04}, i.e., $\xi_k( f(\tau) + g^*(\tau))  = {g({\tau})}$.

The notion of mock modular form was further generalized in \cite{DMZ12} to mixed mock modular forms. 
We say that a holomorphic function $f$ is \textit{mixed mock modular} if there exists $g \in S_{2-k + \ell}(\rho_{L_1})$ and $h \in M_{\ell}(\rho_{L_2})$ such that $f + h \otimes g^* \in \Ac_k(\Gamma, \rho_L)$. In this case, the function $g(\tau) \otimes \overline{h({\tau})}$ is called the \textit{shadow} of $f$.

\subsection{Unary theta series associated to a cusp}

Suppose $b^+ = 2, b^- = 1$ and $L$ contains isotropic vectors.
For an isotropic line $\ell \in \Iso(V)$, the space $\ell^\perp/\ell$ is a one-dimensional positive definite space with quadratic form $Q$. It contains an even lattice 
$$
K_\ell := (L \cap \ell^\perp)/(L \cap \ell),
$$
whose dual lattice is given by $K'_\ell := (L' \cap \ell^\perp)/(L' \cap \ell)$.
There is an exact sequence
$$
(L' \cap \ell)/(L \cap \ell) \to (L' \cap \ell^\perp )/(L \cap \ell^\perp) \to K'_\ell /K_\ell.
$$
For $h \in L'/L$ with $h \perp \ell$, let $\overline{h}$ denote its image in $K'_\ell/K_\ell$.
Define a unary theta function $\Theta_\ell$ by
\begin{equation}
  \label{eq:Thetal}
 \Theta_\ell(\tau) := \sum_{h \in L'/L, \; h \perp \ell} \ef_h \sum_{X \in K_\ell + \overline{h}} (X, \Re(W(\sigma_\ell i))) \ebf(Q(X) \tau).
\end{equation}
It transforms with weight $3/2$ and the Weil representation $\rho_L$.
Furthermore, it only depends on the class of $\ell$ in $\Gamma \backslash \Iso(V)$. The Fourier coefficients take  the following shape.
\begin{lemma}
  \label{lemma:FCl}
For $m \in \Qb_{> 0}$ and $h \in L'/L$, let $b_\ell(m, h)$ be the $(m, h)$\tth Fourier coefficient of $\Theta_\ell(\tau)$. Then 
\begin{equation}
  \label{eq:FCl}
  b_\ell(m, h) = -\frac{\sqrt{N}}{2\varepsilon_\ell}  \sum_{X \in \Gamma_\ell \backslash (L_{m, h} \cap \ell^\perp)} \delta_\ell(X),
\end{equation}
where $L_{m, h} := \{ \lambda \in L + h: Q(\lambda) = m\}$ and $\delta_\ell(X) = \pm 1$ if $\ell = \ell_{\pm X}$. 
\end{lemma}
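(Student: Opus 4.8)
The plan is to unwind the definition of $\Theta_\ell$ in \eqref{eq:Thetal} and match the sum over $X \in K_\ell + \overline{h}$ of fixed norm $m$ with the orbit sum over $\Gamma_\ell \backslash (L_{m,h} \cap \ell^\perp)$. First I would fix an identification of $\ell^\perp/\ell$ with a concrete rank-one lattice. Using the splitting from Example~\ref{ex:N} (or its analogue for the general lattice $L$), write $\sigma_\ell^{-1} L \cap \ell_0^\perp$ in coordinates so that elements of $K_\ell + \overline{h}$ are parametrized by an arithmetic progression; the condition $Q(X) = m$ then pins down the relevant vectors up to sign. The map $X \mapsto \sigma_\ell \cdot (\text{lift of } X)$ sends $K_\ell + \overline{h}$ onto $(L + h) \cap \ell^\perp$ modulo $(L \cap \ell) = \Zb\beta_\ell u_\ell$, i.e.\ onto $\Gamma_\ell \backslash (L_{m,h} \cap \ell^\perp)$ once we pass to norm-$m$ vectors (the $\Gamma_\ell$-action being exactly translation by $\Zb\beta_\ell u_\ell$ via \eqref{eq:sigmal}, noting $\bar\Gamma_\ell$ acts trivially on $\ell^\perp/\ell$). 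This accounts for the orbit sum on the right of \eqref{eq:FCl}.

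Next I would compute the linear factor $(X, \Re(W(\sigma_\ell i)))$ explicitly. Since $W(z) = \partial_{\bar z} X(z)$ and $z = \sigma_\ell i$ corresponds to the cusp $\ell$ (in the coordinate $z_\ell = \sigma_\ell^{-1} z$ this is $z_\ell = i$), the pairing $(X, \Re W(\sigma_\ell i))$ becomes, after conjugating by $\sigma_\ell$, a pairing of $\sigma_\ell^{-1} X$ against $\Re W(i) = \Re \big(\tfrac{1}{2i\sqrt N}\smat{0}{-1}{0}{0}\big)$-type matrix; one checks this isolates the $u_0$-component of $\sigma_\ell^{-1} X$, up to an explicit scalar involving $\sqrt N$ and $y$. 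For a norm-$m$ vector $X$ with $\ell = \ell_{\pm X}$, the $u_0$-component of $\sigma_{\ell}^{-1}X$ has a fixed sign and magnitude depending only on $\sqrt{m}$ and the cusp data; comparing with \eqref{eq:sigmaX} (where $\sigma_{\ell_X}^{-1} X = \sqrt{m/N}\,\smat{1}{-2r_X}{0}{-1}$), the relevant component is $\mp\sqrt{m/N}$, which produces the sign $\delta_\ell(X)$ and, together with the Jacobian/normalization constants, the prefactor $-\tfrac{\sqrt N}{2\varepsilon_\ell}$. The factor $1/\varepsilon_\ell = \beta_\ell/\alpha_\ell$ enters because the theta sum runs over $K_\ell + \overline h$ (lattice vectors modulo $\ell$, spacing $\beta_\ell$) while the geometric normalization of $W$ at the cusp carries the width $\alpha_\ell$.

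Finally I would assemble: extracting the $(m,h)$ Fourier coefficient of $\Theta_\ell$ means summing $(X, \Re W(\sigma_\ell i))$ over $X \in K_\ell + \overline h$ with $Q(X) = m$, which under the identification above is a sum over $\Gamma_\ell \backslash (L_{m,h}\cap\ell^\perp)$ of $\delta_\ell(X)$ times the common constant $-\tfrac{\sqrt N}{2\varepsilon_\ell}$, giving \eqref{eq:FCl}. I expect the main obstacle to be bookkeeping the normalization constants correctly — in particular tracking how $\sqrt N$, the width $\alpha_\ell$ versus the primitive-vector scale $\beta_\ell$, and the factor of $2$ from $W = \partial_{\bar z} X$ combine — rather than any conceptual difficulty; everything else is a direct change of variables. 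One should also double-check the degenerate case where $L_{m,h}\cap \ell^\perp$ contains no split-hyperbolic vectors, so that $\delta_\ell$ is defined on the whole index set, which follows since $X \in \ell^\perp$ forces $\ell$ to be one of the two isotropic lines spanned by $X^\perp$, hence $\ell = \ell_{\pm X}$.
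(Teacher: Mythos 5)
There is a genuine gap at the heart of your argument: you identify the $\Gamma_\ell$-orbit space $\Gamma_\ell \backslash (L_{m,h}\cap\ell^\perp)$ with the coset space $(L_{m,h}\cap\ell^\perp)/(L\cap\ell)$, asserting that ``the $\Gamma_\ell$-action is exactly translation by $\Zb\beta_\ell u_\ell$.'' This is false. By \eqref{eq:sigmal} and \eqref{eq:sigmaX}, the generator of $\bar\Gamma_\ell$ acts on a norm-$m$ vector $X\in\ell^\perp$ by conjugation, which translates $\sigma_\ell^{-1}X=\sqrt{m/N}\smat{1}{-2r_X}{}{-1}$ to $\sqrt{m/N}\smat{1}{-2r_X-2\alpha_\ell}{}{-1}$, i.e.\ by $2\sqrt{m/N}\,\alpha_\ell\, u_\ell$ --- an amount depending on $m$ --- whereas the quotient defining $K_\ell+\overline{h}$ is by $\beta_\ell u_\ell$. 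Consequently the natural map $\Gamma_\ell\backslash(L_{m,h}\cap\ell^\perp)\to (L_{m,h}\cap\ell^\perp)/(L\cap\ell)$ is not a bijection but a $2\sqrt{m/N}\,\varepsilon_\ell$-to-$1$ covering (this is the content of Lemma 3.7 of \cite{Funke04}, which the paper invokes), and this multiplicity is exactly what the lemma is about: it is the mechanism that converts the $m$-dependent weight into the $m$-independent prefactor $-\sqrt{N}/(2\varepsilon_\ell)$.

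Your accounting of the constants is correspondingly off. The weight $(X,\Re(W(\sigma_\ell i)))$ carries no $\alpha_\ell$ or $\varepsilon_\ell$ at all: a direct computation with $\Re W(i)=\tfrac{1}{2\sqrt N}\smat{-1}{0}{0}{1}$ and $(X,Y)=N\tr(XY)$ gives $(X,\Re(W(\sigma_\ell i)))=\mp\sqrt{m}$ when $\ell=\ell_{\pm X}$, so attributing the factor $1/\varepsilon_\ell=\beta_\ell/\alpha_\ell$ to ``the geometric normalization of $W$ at the cusp'' is not correct. If your identification of the two quotients were right, you would obtain $b_\ell(m,h)=-\sqrt{m}\sum\delta_\ell(X)$, which has a spurious $\sqrt m$ and no $\sqrt N/(2\varepsilon_\ell)$; the correct bookkeeping is $b_\ell(m,h)=\sum_{\text{cosets}}(\mp\sqrt m)=\tfrac{1}{2\sqrt{m/N}\,\varepsilon_\ell}\sum_{\Gamma_\ell\text{-orbits}}(\mp\sqrt m)$, which yields \eqref{eq:FCl}. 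Your observation that every $X\in L_{m,h}\cap\ell^\perp$ with $m>0$ satisfies $\ell=\ell_{\pm X}$ (so $\delta_\ell$ is everywhere defined) is correct, and your reduction to the case $h\perp\ell$ is fine; but the covering-degree computation is the missing idea, not a routine change of variables.
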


\begin{proof}
If $h$ is not orthogonal to $\ell$, then both sides above are zero trivially.
So suppose $h \perp \ell$. Then there is a natural surjection from the set $L_{m, h}\cap \ell^\perp$ to $\{X \in K_\ell + \overline{h}: Q(X) = m\}$. The kernel is the subgroup of $L' \cap \ell$ that acts on $L_{m, h} \cap \ell^\perp$ by addition, which is exactly $L \cap \ell$. 
The map $L_{m, h} \cap \ell^\perp \to (L_{m, h} \cap \ell^\perp) / (L \cap \ell)$ now factors through $L_{m, h} \cap \ell^\perp \to \Gamma_\ell \backslash (L_{m, h} \cap \ell^\perp)$.
Arguing as in Lemma 3.7 of \cite{Funke04}, we know that 
$$
 \Gamma_\ell \backslash (L_{m, h} \cap \ell^\perp) \to 
(L_{m, h} \cap \ell^\perp) / (L \cap \ell)
$$
is an $2\sqrt{m/N} \varepsilon_\ell$ to 1 covering map.
For an element $X \in L_{m, h} \cap \ell^\perp$ with $\ell_X = \ell$,  \eqref{eq:sigmaX} implies that
 $$
(X, \Re(W(\sigma_\ell i))) = 
 \frac{1}{2 \sqrt{N}} \lp \sigma_\ell^{-1} X, \pmat{-1}{0}{0}{1} \rp = - \sqrt{m},
$$
with the sign changed if $\ell_{-X} = \ell$ instead. This  finishes the proof.
\end{proof}

\subsection{Schwartz Forms and Theta functions.}

Recall that the metaplectic group $\Mp_2(\Rb)$ acts on the space of Schwartz functions on $V_\Rb$ via the Weil representation $\omega$ over $\Rb$. For a convenient reference, see e.g. \cite{Shintani75}. We consider the Shintani Schwartz function $\varphi_\Sh$ \cite{Shintani75}, 
\begin{equation} \label{eq:Shintani}
 \varphi_{\Sh}(X, z) := (X, W(z)) e^{-\pi(X, X)_z}. %
\end{equation}
Then $\varphi_{\Sh}(X, z)$ has weight $3/2$, that is, 
\[
\omega(k'(\theta)) \varphi_{\Sh}= \chi_{3/2}(k'(\theta)) \varphi_{\Sh}. 
\]
Here $k'(\theta)$ is a preimage of $\kzxz{\cos\theta}{\sin{\theta}}{-\sin{\theta}}{\cos\theta} \in \SO(2)$ in $\Mp_2(\Rb)$ and $\chi_{3/2}$ is the character of $K'$ whose square is given by  $\chi_{3/2}^2(k'(\theta)) = e^{3 i\theta}$. 
The Kudla-Millson Schwartz form \cite{KM82} is closely related to $\varphi_{\Sh}$ and is given by 
\begin{equation}
  \label{eq:KM_form}
  \varphi_\KM(X, z) :=  \varphi_\Sh(X, z) d \overline{z} + \overline{\varphi_\Sh(X, z)} dz.
\end{equation}
Then $\varphi_\KM(X)$ defines a closed $1$-form on $\Dc$. 
The associated ``Millson" Schwartz function \cite{KM90},
\begin{equation}
  \label{eq:psi}
    \psi_{}(X, z) := \frac{(X, X(z))}{2} e^{-\pi (X, X)_z}
\end{equation}
has weight $-1/2$. In what follows, it will be convenient to set
\[
\varphi^0(X):= \varphi(X) e^{\pi (X,X)},
\]
where $\varphi$ is any of the functions above. For a Schwartz function $\varphi$ of weight $\ell$, we set
\[
\varphi(X,\tau,z):= v^{-\ell/2}\omega(g'_\tau)\varphi(x)= v^{3/4-\ell/2} \varphi^0(\sqrt{v}X,z) e^{\pi i (X,X)\tau}.
\]
Here, $g'_{\tau} = \left( \kzxz{u^{1/2}} {v^{-1/2}v}{0}{u^{-1/2}}, u^{-1/4}\right) \in \Mp_2(\Rb) $ maps the base point $i \in \Hb$ to $\tau=u+iv \in \Hb$. Finally note that all functions (and forms) above are $G(\Rb)$-equivariant, that is,
\[
\varphi(g\cdot X,g\cdot z)= \varphi(X,z) \qquad \qquad (g \in G(\Rb)).
\]
In particular, for fixed $X$, the above functions are $\Gamma_X$-invariant. 

For $\varphi \in \{ \varphi_\Sh, \varphi_\KM, \psi\}$ and $L \subset V$ a lattice as in Section \ref{subsec:modcurve}, we define the theta series
\begin{equation}
  \label{eq:Theta}
\Theta(\tau, z, \varphi) := \sum_{h \in L'/L} \Theta_h(\tau, z, \varphi) \ef_h, \; \qquad \Theta_h(\tau, z, \varphi) :=  \sum_{X \in L + h} \varphi(X, \tau, z).
\end{equation}
The series all converge absolutely due to the exponential decay of $\varphi$. Then $\Theta(\tau, z, \varphi_\Sh)$ and $\Theta(\tau, z, \psi)$ transform in $\tau$ with respect to the Weil representation $\rho_L$ with weight $3/2$ and $-1/2$ respectively, see \cite{Shintani75}. As a function of $z$, $\Theta(\tau, z, \varphi_\Sh)$ and $\Theta(\tau, z, \psi)$ have weights $2$ and $0$ for $\Gamma_L$ respectively, while $\Theta(\tau, z, \varphi_\KM)$ defines a closed differential $1$-form on $M$.

When $m \in \Qb^\times$, the set $\Gamma \backslash L_{m, h}$ is finite, where $L_{m, h}$ is defined in Lemma \ref{lemma:FCl}.
Therefore, we can write the Fourier expansion of $\Theta_h(\tau, z, \varphi)$ as (see \cite[Equation (4.15)]{BF06})
\begin{equation}\label{eq:decomp0}
 \Theta_h(\tau, z, \varphi) =
\Theta_{0,h}(\tau, z, \varphi)
+  \sum_{m \in \Qb^\times} \ebf(m\tau) \sum_{X \in \Gamma \backslash L_{m, h}} a(\sqrt{v}X, z, \varphi^0),
\end{equation}
where 
\begin{equation}\label{eq:decomp1}
\Theta_{0,h}(\tau, z, \varphi) := \sum_{X \in L_{0, h}} \varphi^0(\sqrt{v} X, z), \qquad
a(X, z, \varphi^0) := \sum_{\gamma \in \Gamma_X \backslash \Gamma}  \varphi^0(\gamma^{-1} X, z). 
\end{equation}

Using a partial Poisson summation, we can describe their behaviors near the cusps of $M$ as follows (see \cite{FM02}). 
\begin{prop}
\label{prop:growth}
Let $z_\ell = x_\ell + i y_\ell := \sigma_\ell^{-1} z $ be the local coordinate at the cusp $\ell \in \Iso(V)$.
As $y_\ell \to \infty$, we have
\begin{align}
  \label{eq:growth}
  \Theta(\tau, z, \varphi_\Sh) &= \frac{1}{ \sqrt{N} \beta_\ell} \Theta_\ell(\tau) + O(e^{-C y_\ell^2 \min(v, 1/v)}), \\
  \Theta(\tau, z, \psi) &= O(e^{-C v y_\ell^2}) \notag
\end{align}
for some absolute constant $C > 0$ depending only on $L$. Furthermore, let $X \in L_{m, h}$ be a hyperbolic element, i.e.\ $m > 0$. Then 
\begin{align}
\label{eq:abound1}
 a(\sqrt{v}X, z, \varphi^0_\Sh) = 
                           \begin{cases}
                             \lp \mp \frac{1}{2 \alpha_\ell} + O(e^{-C y^2_\ell \min\{mv, 1/(mv)\}}) \rp  & \mathrm{if} \; \iota(X) = 1, [\ell] = [\ell_{\pm X}], \\
O(e^{-C mv y^2_\ell}), & \mathrm{otherwise.}
                           \end{cases}
 \end{align}
Here $\alpha_\ell$ is the width of the cusp $\ell$ as defined in Section~\ref{subsec:cusp}.
\end{prop}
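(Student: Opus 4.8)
The plan is to obtain all four asymptotic statements by the standard partial Poisson summation technique along the isotropic line $\ell$, following the approach of Funke--Millson \cite{FM02} (see also \cite{Kudla97, BF06}). The key observation is that for the growth estimates we may conjugate by $\sigma_\ell$ and thereby assume $\ell = \ell_0$ is the cusp at $i\infty$, so that $L\cap\ell$ is spanned by $\beta_\ell u_0$ and $z_\ell$ is literally $z$; the $G(\Rb)$-equivariance of the Schwartz functions makes $\Theta(\tau, \sigma_\ell z, \varphi)$ equal to the theta series for $\sigma_\ell^{-1}L$, so no generality is lost. Writing $L = P \oplus L_H$ relative to the hyperbolic plane $H$ spanned by $\ell$ and a complementary isotropic line (cf.\ the commented-out decomposition in Example~\ref{ex:N}), the theta series factors, and Poisson summation is applied only in the $L_H$-variable, i.e.\ in the direction of the isotropic vector $u_\ell$ and its dual.

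First I would carry out the partial Poisson summation for $\Theta_h(\tau, z, \varphi_\Sh)$. Applying Poisson in the $u_\ell$-direction replaces the sum over that rank-one piece of the lattice by a sum over its dual; the new Fourier frequency $n$ appears in an exponential of the form $e^{-\pi n^2 y_\ell^2/(\text{something})}$ coming from the Gaussian $e^{-\pi(X,X)_z}$ evaluated via \eqref{eq:majorant}, since $(X, X(z))$ picks up a factor of $y_\ell$ for vectors with a nonzero $u_\ell$-component. The $n=0$ term is the only one without Gaussian decay in $y_\ell$; a direct computation identifies it, using the definition \eqref{eq:Thetal} of $\Theta_\ell$ and the evaluation of $(X,\Re W(\sigma_\ell i))$ from the proof of Lemma~\ref{lemma:FCl}, with $\frac{1}{\sqrt N \beta_\ell}\Theta_\ell(\tau)$; the factor $\beta_\ell^{-1}$ is the covolume of the rank-one lattice over which we summed. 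All $n\neq 0$ terms are bounded by $e^{-Cy_\ell^2\min(v,1/v)}$ after using $|q^{Q(X)}| = e^{-2\pi v Q(X)}$ together with the Gaussian in $n$; the $\min(v,1/v)$ reflects that $Q(X)$ can be negative (the $x$-component of $X$ contributes $+v$, the $u_\ell$-component $-1/v$ after Poisson). For $\Theta(\tau, z, \psi)$ the weight $-1/2$ factor $\psi^0$ carries an extra $(X, X(z))$ in the numerator rather than the Millson weight, and crucially the would-be constant term after Poisson involves $\sum (X, X(\sigma_\ell i))$ over $K_\ell$-type vectors which vanishes by the oddness of the integrand, so there is no polynomially-growing term at all and we get pure Gaussian decay $e^{-Cvy_\ell^2}$.

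Next I would establish \eqref{eq:abound1}. Here one fixes a single hyperbolic $X \in L_{m,h}$ with $m>0$ and analyzes $a(\sqrt v X, z, \varphi_\Sh^0) = \sum_{\gamma\in\Gamma_X\backslash\Gamma}\varphi_\Sh^0(\gamma^{-1}X, z)$ near the cusp $\ell$. Using the description of $\Gamma$ near $\ell$ from \eqref{eq:sigmal}, the coset sum near the cusp is essentially a sum over the cyclic group $\sigma_\ell^{-1}\Gamma_\ell\sigma_\ell$ translating $X$, plus contributions that are already exponentially small; a partial Poisson summation in this unipotent direction is again the tool. When $X$ is split-hyperbolic and $\ell$ is one of its two endpoints $\ell_{\pm X}$, the geodesic $c_X$ runs into the cusp $\ell$, the vector $\sigma_{\ell_X}^{-1}X$ has the explicit shape \eqref{eq:sigmaX} with zero lower-left entry, and the constant term of the Poisson-transformed sum is $\mp\frac{1}{2\alpha_\ell}$; the sign is $\delta_\ell(X)$ and the $1/\alpha_\ell$ is the covolume of the translation lattice from \eqref{eq:sigmal}. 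In every other case — $X$ not split-hyperbolic, or split-hyperbolic but $\ell$ not one of its endpoints — the orbit of $c_X$ stays a bounded hyperbolic distance from the cusp, so $(\gamma^{-1}X, X(z))$ grows like $y_\ell$ and the Gaussian forces decay $e^{-Cmvy_\ell^2}$, with the $\min\{mv, 1/(mv)\}$ in the first case again arising because after Poisson the exponent has the indefinite shape $mv + (\text{frequency})^2/(mv)$.

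The main obstacle I anticipate is bookkeeping rather than conceptual: correctly tracking the lattice covolumes $\beta_\ell$, $\alpha_\ell$, $\varepsilon_\ell$ and the class invariants $h_\ell$ through the partial Poisson summation, and pinning down the constant and the sign $\delta_\ell(X)$ (equivalently $\mp$) in \eqref{eq:abound1} — this is exactly the point where \eqref{eq:sigmaX}, the orientation convention on $c_X$, and the definition of $\ell_X$ as ``the'' endpoint of $c_X$ must all be used consistently. A secondary subtlety is uniformity: the constant $C$ must depend only on $L$ and not on $X$ or $\tau$, which requires the estimates on the non-constant Fourier terms to be genuinely uniform in $m$ (hence the $\min\{mv, 1/(mv)\}$ shape, valid for all $m>0$) and in the cusp $\ell$ (there are finitely many classes, so a uniform $C$ exists). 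Given \cite{FM02}, none of this is new, and I would present it as a computation parallel to theirs, emphasizing only the modifications forced by working with $\varphi_\Sh$ of weight $3/2$ rather than the Kudla--Millson form and by allowing the lattice $L$ to be non-maximal with several cusp classes.
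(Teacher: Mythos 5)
Your proposal is correct and follows essentially the same route as the paper, which states this proposition without a written proof and refers to exactly the partial Poisson summation along the isotropic direction from \cite{FM02} (and the orbital-sum reduction to the unipotent stabilizer as in Lemma 5.2 of \cite{BF06}) that you outline, including the identification of the zeroth Fourier term with $\frac{1}{\sqrt{N}\beta_\ell}\Theta_\ell(\tau)$, the vanishing by oddness of the Millson constant term, and the source of the $\min\{mv,1/(mv)\}$ exponent. The only delicate point is the sign and covolume bookkeeping in \eqref{eq:abound1}, which you correctly identify and which must be made consistent with \eqref{eq:sigmaX} and the orientation convention defining $\ell_X$.
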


We can extend $\Theta(\tau, z, \varphi_\Sh)$ and $\Theta(\tau, z, \psi)$ to $M^*$ by continuity.

\subsection{A singular Schwartz function}

We define a singular (Schwartz) function $\tpsi$ on $V_\Rb$ by
\begin{equation} \label{eq:tpsi}
\tpsi (X, z) := - \frac{\sgn(X, X(z))}{2} \erfc(\sqrt{\pi} |(X, X(z))|) e^{-\pi (X, X)}, \\
\end{equation}
where $\erfc(t) := \frac{2}{\sqrt{\pi}} \int^\infty_t e^{-r^2} dr$ is the complementary error function. We set $\tpsi(0)=0$. Note that $\tpsi (X, z)$ has a singularity along the geodesic $c_X$ if $Q(X)>0$ and is smooth otherwise. We also set $\tpsi(X)=0$ along $c_X$. As before we write 
$\tpsi^0(X, z) =\tpsi (X, z) e^{\pi (X,X)}$ and set
\[
\tpsi (X, \tau, z) = \tpsi^0 (\sqrt{v} X, z)  \ebf(Q(X) \tau)
\]
as if $\tpsi$ had weight $3/2$ under the Weil representation (which of course it doesn't). This is motivated by the following Lemma~\ref{lemma:diff_eq}(ii) which can be obtained by a direct calculation (such as in \cite{FK17}).

\begin{lemma} \label{lemma:diff_eq}

\begin{itemize}

\item[(i)]
Away from the singularity of $\tpsi(X, z)$, 
\begin{equation}
\label{eq:primitive}
 \overline{\partial} \tpsi(X, z) =  \varphi_\Sh^0(X, z)  d\overline{z}.
\end{equation}
Here $ \overline{\partial} $ is the exterior anti-holomorphic derivative for $z\in \mathcal{D}$. In particular, for $Q(X) \le 0$, the function $\tpsi^0(X, z)$ is a smooth $\overline{\partial}$-primitive of 
$ \varphi_\Sh^0(X, z)  d\overline{z}$. For $Q(X)<0$, the function $ \tpsi^0(X, z)$ satisfies the following current equation, 
 \begin{equation}
  \label{eq:diff_eq}
  \int_{\Gamma_X \backslash \Dc}  \left( \overline{\partial} \eta\right) \wedge \tpsi^0(X, z) =  - \int_{c_X} \eta + \int_{\Gamma_X \backslash \Dc} \eta \wedge \varphi_\Sh^0(X, z)  d\overline{z}.
\end{equation}
Here $\eta$ is a compactly supported $1$-form on $\Gamma_X \backslash \Dc$. 

\item[(ii)]
For the action of $L_{\tau} := -2iv^2 \partial_{\overline{\tau}}$, the weight-lowering operator in $\tau$, we have
\begin{equation}
L_{\tau} \tpsi(X, \tau, z)  =  \psi(X, \tau, z).
\end{equation}
\end{itemize}
\end{lemma}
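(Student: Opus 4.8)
The plan is to verify both assertions by direct computation on the Schwartz function $\tpsi^0$, treating the variables $X$ and $z$ as the real variables and differentiating under the formulas defining $\erfc$ and the norm/pairing.

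\textbf{Part (i).} First I would work away from the singularity, where everything is smooth. Recall $\tpsi^0(X,z) = -\tfrac{1}{2}\sgn(X,X(z))\,\erfc(\sqrt{\pi}|(X,X(z))|)$. Since $\overline{\partial}$ acts only in $z$, I would compute $\partial_{\overline z}$ of the quantity $(X,X(z))$; because $X(z)$ depends on $z$ through $g_z$ and $W(z)=\partial_{\overline z}X(z)$ was introduced precisely for this, one gets $\partial_{\overline z}(X,X(z)) = (X,W(z))$. Then, using $\frac{d}{dt}\erfc(t) = -\tfrac{2}{\sqrt\pi}e^{-t^2}$ and being careful with the sign function (whose derivative contributes a delta supported exactly on $c_X$, which is the singularity we are avoiding), the chain rule collapses the product $\sgn\cdot\erfc'\cdot\partial_{\overline z}|(X,X(z))|$ into $-e^{-\pi(X,X(z))^2}(X,W(z))$, up to the factor $\tfrac12$ and constants. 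Comparing with \eqref{eq:majorant}, i.e. $(X,X)_z = (X,X) + (X,X(z))^2$, and with the definition \eqref{eq:Shintani} of $\varphi_{\Sh}^0(X,z) = (X,W(z))e^{-\pi(X,X(z))^2}$ (after stripping $e^{\pi(X,X)}$), this yields exactly \eqref{eq:primitive}. When $Q(X)\le 0$ the pairing $(X,X(z))$ never vanishes (as noted after \eqref{eq:majorant}), so $\tpsi^0$ is globally smooth and is an honest $\overline\partial$-primitive. For $Q(X)<0$ the current equation \eqref{eq:diff_eq} is then Stokes' theorem on $\Gamma_X\backslash\Dc$: since $\tpsi^0$ is smooth here and decays, $\int \overline\partial(\eta\,\tpsi^0) = \int \overline\partial\eta\wedge\tpsi^0 + \int\eta\wedge\overline\partial\tpsi^0$ integrates to zero by compact support, and substituting \eqref{eq:primitive} gives the claim with no boundary term — but I should double-check the orientation/sign of the $c_X$ term, which for $Q(X)<0$ is actually absent since $c_X$ is empty; the $c_X$ term is the genuinely interesting case $Q(X)>0$, where the $\sgn$-jump across the singular geodesic produces exactly $-\int_{c_X}\eta$ as the boundary contribution when one excises an $\epsilon$-tube around $c_X$ and lets $\epsilon\to 0$. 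This excision argument, tracking that the error-function tails kill the tube boundary except for the jump, is the one place requiring real care.

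\textbf{Part (ii).} Here I would just differentiate the definition $\tpsi(X,\tau,z) = \tpsi^0(\sqrt v X,z)\,\ebf(Q(X)\tau)$ with respect to $\overline\tau$, with $\tau = u+iv$. Two terms appear: one from $\partial_{\overline\tau}$ hitting $\ebf(Q(X)\tau) = e^{2\pi i Q(X)\tau}$, which vanishes since that factor is holomorphic in $\tau$; and one from $\partial_{\overline\tau}$ hitting the $\sqrt v$ inside $\tpsi^0$, using $\partial_{\overline\tau} v = \tfrac{1}{2i}$, hence $\partial_{\overline\tau}\sqrt v = \tfrac{1}{4i\sqrt v}$, applied via the chain rule to $\tpsi^0(\sqrt v X, z) = -\tfrac12\sgn(\sqrt v(X,X(z)))\erfc(\sqrt{\pi v}|(X,X(z))|)$. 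The $\sgn$ is insensitive to the positive scalar $\sqrt v$, so only the $\erfc$ argument varies, and $\tfrac{d}{dv}\erfc(\sqrt{\pi v}\,|a|) = -\tfrac{2}{\sqrt\pi}e^{-\pi v a^2}\cdot\tfrac{\sqrt\pi|a|}{2\sqrt v} = -\tfrac{|a|}{\sqrt v}e^{-\pi v a^2}$ with $a = (X,X(z))$. Multiplying out the constants and the operator $L_\tau = -2iv^2\partial_{\overline\tau}$, the $|a|\sgn(a)$ recombines to $a = (X,X(z))$, and one is left with $\tfrac12(X,X(z))\,v^{\cdots}e^{-\pi v(X,X(z))^2}\ebf(Q(X)\tau)$; matching powers of $v$ and comparing with the definitions of $\psi$ in \eqref{eq:psi} and of $\psi(X,\tau,z)$ (weight $-1/2$, so the normalization factor $v^{3/4-\ell/2} = v^{3/4+1/4} = v$) gives $L_\tau\tpsi(X,\tau,z) = \psi(X,\tau,z)$ on the nose.

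The main obstacle is purely bookkeeping in Part (i): correctly isolating the distributional $\sgn$-derivative so that it produces the $-\int_{c_X}\eta$ term with the right sign and orientation, rather than the rest of the smooth computation, which is routine. I would handle this by the standard $\epsilon$-tube excision around $c_X$, invoking the exponential decay of $\erfc$ to discard all tube-boundary contributions except the jump of $\sgn$, exactly as in the analogous computation in \cite{FK17}.
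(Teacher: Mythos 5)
Your proposal is correct and follows the only available route, which is also the paper's: the paper offers no written proof beyond ``a direct calculation (such as in \cite{FK17})'', and your direct calculation supplies exactly that, including the correct observation that the $\int_{c_X}\eta$ term arises from the jump of $\sgn$ across the geodesic in the case $Q(X)>0$ (the ``$Q(X)<0$'' in the statement is evidently a typo) via the standard $\epsilon$-tube excision. One slip in Part (ii): $\partial_{\overline\tau}v=-\tfrac{1}{2i}=\tfrac{i}{2}$, not $\tfrac{1}{2i}$; if you carry your stated value through, the computation yields $-\psi(X,\tau,z)$ instead of $\psi(X,\tau,z)$, so fix that sign — with it, one gets $L_\tau\tpsi=-2iv^2\cdot\tfrac{i}{2}\cdot\tfrac{(X,X(z))}{2\sqrt v}e^{-\pi v(X,X(z))^2}\ebf(Q(X)\tau)=v^{3/2}\tfrac{(X,X(z))}{2}e^{-\pi v(X,X(z))^2}\ebf(Q(X)\tau)=\psi(X,\tau,z)$ as claimed.
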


We do not introduce the ``full" theta series $\Theta(\tau, z, \tpsi)$ since it would yield a function with dense singularities in $\mathcal{D}$. However, the individual Fourier coefficients define much better behaved functions with locally finite singularities. We set 
\[
\Theta_{m,h}(v, z, \tpsi) :=  \sum_{X \in L_{m,h}} \tpsi^0(\sqrt{v}X, z) \qquad \qquad (m \in \Qb)
\]
and also write if $Q(X) \ne 0$, $a(X, z, \tpsi^0) := \sum_{\gamma \in \Gamma_X \backslash \Gamma}  \tpsi^0(\gamma^{-1} X, z)$ for the individual $\Gamma$-orbits. 

We now state the behavior of $\Theta_{m,h}(v, z, \tpsi)$ at the cusps, which will be proved in the next subsection. We let $\Bb_1(x)$ be the first periodic Bernoulli polynomial given by
\begin{equation}
  \label{eq:B1}
  \Bb_1(x)  := x - (\lceil x \rceil + \lfloor x \rfloor)/2.
\end{equation}

\begin{lemma}
  \label{lemma:abound}
Let $X \in L_{m, h}$ be a hyperbolic element, i.e.,\ $m > 0$, and let $\ell \in \Iso(V)$ be a cusp with local coordinate $z_\ell := x_\ell + i y_\ell = \sigma_\ell^{-1} z$.
The function $a(\sqrt{v}X, z, \tpsi^0)$ has the following asymptotic expansions at $\ell$,
\begin{align}
 \label{eq:abound2}
 & a(\sqrt{v}X, z, \tpsi^0) = 
                           \begin{cases}
                             \mp \Bb_1 \lp \frac{x_\ell - r_{\pm X}}{\alpha_\ell} \rp + O(e^{-C y^2_\ell \min\{mv, 1/(mv)\}}), & \mathrm{if} \; \iota(X) = 1, [\ell] = [\ell_{\pm X}], \\
O(e^{-C mv y^2_\ell}), & \mathrm{otherwise.}
                           \end{cases}
\end{align}
Here $r_X$ is defined in \eqref{eq:sigmaX}. Let $\beta_\ell, h_\ell$ be the quantities defined in Section \ref{subsec:cusp}. Then 
  \begin{align*}
\Theta_{0,h}(\tau, z, \tpsi) &= - \Bb_1 \lp \frac{h_\ell}{\beta_\ell} \rp + O(e^{-Cv y_\ell^2}) \qquad \qquad (y_\ell \to \infty).
  \end{align*}

\end{lemma}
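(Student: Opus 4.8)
The plan is to establish both asymptotics by a single partial Poisson summation at the cusp $\ell$, following the argument used for Proposition~\ref{prop:growth} in \cite{FM02}; the genuinely new feature, which I expect to be the main point, is that $\tpsi^0$ has a jump discontinuity across the geodesic $c_X$, and this jump forces a periodic Bernoulli polynomial into the main term in place of the constant appearing for $\varphi_\Sh$ in \eqref{eq:abound1}. First I would reduce to $\ell=\ell_0$, the standard cusp, by the equivariance $\tpsi^0(g\cdot Y,g\cdot z)=\tpsi^0(Y,z)$, so that $z=z_\ell=x+iy$ with $y\to\infty$. Writing $Y=\smat{a}{b}{c}{-a}\in V_\Rb$, a short computation gives $(Y,X(z))=\tfrac{\sqrt N}{y}(c|z|^2-2ax-b)$, which grows linearly in $y$ unless $c=0$, i.e.\ unless $Y\in\ell_0^\perp$. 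Since $|\tpsi^0|\le\tfrac12$ and $\tpsi^0(Y,z)=O(e^{-\pi(Y,X(z))^2})$ whenever $(Y,X(z))\ne 0$ (indeed $\erfc(t)\le e^{-t^2}$ for $t\ge0$), the estimates behind Proposition~\ref{prop:growth} apply and show that in $a(\sqrt v X,z,\tpsi^0)=\sum_{\gamma\in\Gamma_X\backslash\Gamma}\tpsi^0(\sqrt v\,\gamma^{-1}X,z)$ (resp.\ in $\Theta_{0,h}(\tau,z,\tpsi)=\sum_{Y\in L_{0,h}}\tpsi^0(\sqrt v\,Y,z)$) the summands with $\gamma^{-1}X\notin\ell_0^\perp$ (resp.\ $Y\notin\ell_0^\perp$) contribute $O(e^{-Cmvy^2})$ (resp.\ $O(e^{-Cvy^2})$), so only the part lying in $\ell_0^\perp$ remains.

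Next I would evaluate that part. For $\Theta_{0,h}$, since $\ell_0^\perp/\ell_0$ is positive definite one has $L_{0,h}\cap\ell_0^\perp=L_{0,h}\cap\ell_0$, equal to $\{(k\beta_\ell+h_\ell)u_0:k\in\Zb\}$ when $h\perp\ell_0$, with $(Y,X(z))=-\tfrac{\sqrt N}{y}(k\beta_\ell+h_\ell)$; plugging into $\tpsi^0$ rewrites this contribution as $S(h_\ell/\beta_\ell,\tfrac{\sqrt{\pi Nv}\,\beta_\ell}{y})$, where $S(w,c):=\sum_{k\in\Zb}\tfrac12\sgn(w+k)\,\erfc(c|w+k|)$ (and it is $O(e^{-Cvy^2})$ if $h\not\perp\ell_0$, with the convention $h_\ell:=0$). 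For $a(\sqrt v X,z,\tpsi^0)$, the orbit of $X$ meets $\ell_0^\perp$ only if $\iota(X)=1$ and $[\ell]\in\{[\ell_{\pm X}]\}$ — otherwise we are in the second line of \eqref{eq:abound2}. In the case $[\ell]=[\ell_X]$, using $\Gamma_X=\{1\}$ for split-hyperbolic $X$, the relevant $\gamma$ form a single $\Gamma_{\ell_0}$-coset, parametrized by $k\in\Zb$ so that $\gamma^{-1}X=\sqrt{m/N}\,\smat{1}{-2(r_X-k\alpha_\ell)}{0}{-1}$, giving $(\gamma^{-1}X,X(z))=-\tfrac{2\sqrt m\,\alpha_\ell}{y}(w+k)$ with $w=\tfrac{x-r_X}{\alpha_\ell}$, so the $\ell_0^\perp$-contribution is $S(w,\tfrac{2\sqrt{\pi mv}\,\alpha_\ell}{y})$. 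The case $[\ell]=[\ell_{-X}]$ I would reduce to this one using $\tpsi^0(-Y,z)=-\tpsi^0(Y,z)$, which is what produces the correlated signs $\mp$ versus $\pm X$.

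The core is then the elementary identity $S(w,c)=-\Bb_1(w)+O(e^{-\pi^2/c^2})$ as $c\to0^+$, which I would prove by Poisson summation of $g(t):=\tfrac12\sgn(t)\,\erfc(c|t|)$ — odd, bounded, with a single jump at $0$. The zeroth Fourier coefficient $\hat g(0)$ vanishes by oddness; this is precisely where the analogy with $\varphi_\Sh$ breaks, for there the zeroth mode is the constant main term of \eqref{eq:abound1}. For $n\ne0$, one integration by parts plus the Gaussian integral $\int_0^\infty e^{-c^2t^2}\cos(2\pi nt)\,dt=\tfrac{\sqrt\pi}{2c}e^{-\pi^2n^2/c^2}$ gives $\hat g(n)=\tfrac{-i}{2\pi n}(1-e^{-\pi^2n^2/c^2})$; summing the contributions of the constant $1$ yields $\sum_{n\ge1}\tfrac{\sin 2\pi n w}{\pi n}=-\Bb_1(w)$, the Fourier series of the first periodic Bernoulli polynomial, and the rest is $O(e^{-\pi^2/c^2})$. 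Since $1/c^2\asymp y_\ell^2/(mv)$ (resp.\ $y_\ell^2/v$), this remainder is absorbed into the error from the first step, and the two claimed expansions follow.

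The step I expect to cause the most trouble is making the partial Poisson summation rigorous in the presence of the discontinuity: one must verify that $\tpsi^0$, restricted to each $\Gamma$-translate of $c_X$ approaching the cusp, is genuinely odd about that translate (so $\hat g(0)=0$) and that the jumps of $-\Bb_1$ at the integers match those of $\tpsi^0$ across these translates, so that the periodic Bernoulli polynomial is forced rather than merely consistent. The remaining ingredients — the exponential decay of the off-$\ell_0^\perp$ terms and the Gaussian integral — are routine and formally identical to the proof of Proposition~\ref{prop:growth}.
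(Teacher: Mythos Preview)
Your approach is essentially the same as the paper's: reduce to the cusp $\ell_0$, split off the terms with $\gamma^{-1}X\notin\ell_0^\perp$ (citing the estimates behind Proposition~\ref{prop:growth}, which the paper phrases as ``arguing as in Lemma~5.2 of \cite{BF06}''), and then evaluate the remaining one-variable lattice sum by Poisson summation to extract $-\Bb_1$.

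The one genuine technical difference is in how the Poisson step is carried out. You compute $\hat g(n)$ directly at $s=0$ by integration by parts, obtaining $\hat g(n)=\tfrac{-i}{2\pi n}(1-e^{-\pi^2 n^2/c^2})$ and summing; this is correct but leaves you with the conditionally convergent Fourier series of $\Bb_1$, which is exactly the ``discontinuity'' worry you flag at the end. The paper sidesteps this by introducing an auxiliary parameter $s$ (Lemma~\ref{lemma:FT}): it defines $g(w;\kappa,s)$ via an incomplete gamma function so that $g(w;\kappa,0)=\tfrac12\sgn(w)\erfc(\sqrt{\pi\kappa}|w|)$, observes that $G(x;\kappa,s)+\Bb_1(x)$ is bounded, odd and $1$-periodic for $-\tfrac12<\Re(s)\le 0$, computes its Fourier coefficients for $\Re(s)$ slightly negative (where everything converges absolutely), and then lets $s\to 0$ by dominated convergence. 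This regularization is precisely what makes rigorous the step you expect to cause trouble; your direct computation reaches the same identity, and one can justify it instead by invoking Poisson summation for functions of bounded variation, or by first subtracting the sawtooth.

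One small imprecision: the Poisson remainder is $O(e^{-Cy_\ell^2/(mv)})$, not $O(e^{-Cmvy_\ell^2})$, so it is not ``absorbed into'' the first error term --- the two together are what produce the $\min\{mv,1/(mv)\}$ in \eqref{eq:abound2}.
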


We set for all $h \in L'/L$ and $ [\ell] \in  \Gamma \backslash \Iso(V)$, 
\begin{align}\label{eq:tpsicusp}
   \Theta_{m,h}(v, [\ell], \tpsi) 
:= 
\begin{cases}   
  0 \qquad & m\ne0, \\
  - \Bb_1 \lp \frac{h_\ell}{\beta_\ell} \rp   \quad & m=0.
 \end{cases}  
 \end{align}

\subsection{Boundary Behavior of $\tpsi$}\label{Boundary-tpsi}

For $\Re(s) > -1/2$ and $\kappa > 0$, define
\begin{equation}
  \label{eq:g}
  g(w; \kappa, s) :=
  \frac{\pi^{-s - 1/2} |w|^{-2s} \sgn(w)}{2} \Gamma \lp \pi \kappa w^2, s + \frac{1}{2} \rp.
\end{equation}
Note that $g(w; \kappa, 0) = \frac{\sgn(w)}{2} \erfc(\sqrt{\pi \kappa} |w|)$ and $g(w; \kappa, 1/2) = \frac{e^{-\pi w^2 \kappa}}{2 \pi w}$.
We are interested in the periodic function
\begin{equation}
  \label{eq:G}
  G(x; \kappa, s) := \sum_{n \in \Zb} g(x + n; \kappa, s),
\end{equation}
which converges absolutely and uniformly on compact subsets of $\Rb \backslash \Zb$ for $\Re(s) > -1/2$. Using Poisson summation, we can calculate its Fourier expansion.
\begin{lemma}
\label{lemma:FT}
At $s = 0$, the function $G(x; \kappa, s)$ is bounded, 1-periodic in $x$ and has the expansion
\begin{equation}
  \label{eq:GFe}
  G(x; \kappa, 0) =   -\Bb_1(x) + i \sum_{m \in \Zb, m \neq 0} g(m, \kappa^{-1}, 1/2) \ebf(mx),
\end{equation}
where the sum above converges absolutely and uniformly.
\end{lemma}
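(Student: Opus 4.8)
The plan is to prove Lemma~\ref{lemma:FT} by Poisson summation applied to the function $g(w;\kappa,s)$ for $\Re(s)>-1/2$ large enough to justify interchanging sum and integral, and then to analytically continue the resulting Fourier expansion to $s=0$. First I would record the Fourier transform of $g(\,\cdot\,;\kappa,s)$: since $g$ is an odd function of $w$, its Fourier transform is purely imaginary and odd, and a direct computation (using the integral representation of the incomplete gamma function and Fubini) should give $\widehat{g}(\xi;\kappa,s) = i\,c(s)\, g(\xi;\kappa^{-1}, \tfrac12 - s)$ for an explicit elementary factor $c(s)$ built from powers of $\pi$, $\kappa$, and a ratio of gamma functions, with $c(0)=1$. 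The normalization is pinned down by the two special values already recorded in the excerpt: $g(w;\kappa,0)=\tfrac{\sgn(w)}{2}\erfc(\sqrt{\pi\kappa}|w|)$ and $g(w;\kappa,1/2)=\tfrac{e^{-\pi w^2\kappa}}{2\pi w}$, so that $\widehat{g}(\xi;\kappa,0)$ should match $i\,g(\xi;\kappa^{-1},1/2)$ up to checking the constant, which gives exactly the $m\neq0$ terms in \eqref{eq:GFe} after summing over $\xi=m\in\Zb$.

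Next I would handle the constant term $m=0$, which is the subtle point. The naive Poisson formula would contribute $\widehat{g}(0;\kappa,s)$, but $g(\,\cdot\,;\kappa,s)$ is odd, so its integral over $\Rb$ vanishes for all $s$ with $\Re(s)$ in the range of absolute convergence. Hence the genuine constant term of $G$ must come from a more careful analysis: $G(x;\kappa,s)$ is a conditionally/absolutely convergent sum on $\Rb\setminus\Zb$, and at $s=0$ the individual terms $g(x+n;\kappa,0)$ decay like Gaussians, so $G(x;\kappa,0)$ is a well-defined bounded $1$-periodic function, but as $\Re(s)$ decreases toward $-1/2$ the tails behave like $|x+n|^{-2s-1}\sgn(x+n)$, which is summable but only barely, and the Abel-type limit from the region of nice convergence does not simply reproduce the Fourier integral. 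The clean way is to compute $G(x;\kappa,s)$ for $\Re(s)>1/2$, say, where $g(w;\kappa,s)= \tfrac{\pi^{-s-1/2}}{2}|w|^{-2s}\sgn(w)\Gamma(s+\tfrac12)+O(\text{Gaussian tail})$, so that $\sum_n g(x+n;\kappa,s)$ equals $\tfrac{\pi^{-s-1/2}}{2}\Gamma(s+\tfrac12)\sum_n |x+n|^{-2s}\sgn(x+n)$ plus an entire correction; the first piece is (a multiple of) the odd periodic Lerch/Hurwitz-type zeta $\sum_n |x+n|^{-2s}\sgn(x+n)$, whose value at $s=0$ is the classical evaluation $-2\Bb_1(x)$ (equivalently, this is the Hurwitz formula $\zeta(0,x)=\tfrac12-x$ in antisymmetrized form). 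Combining the prefactor $\tfrac{\pi^{-1/2}}{2}\Gamma(\tfrac12)=\tfrac12$ with this $-2\Bb_1(x)$ yields precisely the $-\Bb_1(x)$ in \eqref{eq:GFe}.

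Putting the two pieces together: for $\Re(s)$ large the Fourier coefficients of $G(x;\kappa,s)$ are given by $\widehat{g}(m;\kappa,s)$ for $m\neq0$ and by the analytic continuation of the antisymmetrized Hurwitz zeta contribution for $m=0$; each such coefficient is holomorphic in $s$ near $s=0$, the Fourier series converges absolutely (the $m\neq0$ terms are Gaussian-small in $m$), so the identity \eqref{eq:GFe} extends by analytic continuation to $s=0$. Boundedness and $1$-periodicity of $G(x;\kappa,0)$ are then immediate from the absolutely convergent Fourier expansion on the right-hand side, since $\Bb_1(x)$ is bounded and $1$-periodic and the remaining sum is uniformly convergent.

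The main obstacle is the constant term: Poisson summation applied blindly gives $0$ there because $g$ is odd, so one must instead isolate the non-Gaussian tail of $g(w;\kappa,s)$, recognize the resulting periodic Dirichlet series as an odd Hurwitz-type zeta function, and invoke its value at $s=0$ to produce the Bernoulli polynomial. Keeping careful track of the elementary $s$-dependent constants (powers of $\pi$ and $\kappa$, the $\Gamma(s+\tfrac12)$ factor) so that everything specializes correctly at $s=0$ is the only real bookkeeping, and the special-value checks $g(w;\kappa,0)$ and $g(w;\kappa,1/2)$ already quoted in the excerpt serve as useful consistency checks for the Fourier-transform normalization.
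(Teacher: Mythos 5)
Your overall strategy (Poisson summation in $w$ plus analytic continuation in $s$) is the same as the paper's, but the central computation is wrong and the bookkeeping built on it is incoherent. The Fourier transform of $g$ is \emph{not} of the form $i\,c(s)\,g(\xi;\kappa^{-1},\tfrac12-s)$: writing $\Gamma(\pi\kappa w^2,s+\tfrac12)=(\pi\kappa w^2)^{s+1/2}\int_1^\infty e^{-\pi\kappa w^2u}u^{s-1/2}\,du$ and integrating the Gaussian gives
\begin{equation*}
\widehat{g}(\xi;\kappa,s)=i\lp \frac{\sgn(\xi)}{|\xi|^{1-2s}}\,\frac{\Gamma(1-s)}{2\pi^{1-s}}-g\lp\xi;\kappa^{-1},\tfrac12-s\rp\rp,
\end{equation*}
and the extra power term is essential: summed over $m\neq0$ at $s=0$ it is precisely the Fourier series $\sum_{m\neq0}\tfrac{1}{2\pi i m}\ebf(mx)=-\Bb_1(x)$. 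Your proposed formula already fails at $s=0$: the transform of $\tfrac{\sgn(w)}{2}\erfc(\sqrt{\pi\kappa}|w|)$ at $\xi>0$ is $\tfrac{i}{2\pi\xi}\lp 1-e^{-\pi\xi^2/\kappa}\rp$, not a multiple of $\tfrac{e^{-\pi\xi^2/\kappa}}{2\pi\xi}$, so the special values you cite do not ``pin down'' your claimed normalization — they refute it.

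Consequently your account of where $-\Bb_1(x)$ comes from is mistaken. It is not a constant-term issue: $-\Bb_1(x)$ is not constant, and the genuine $m=0$ Fourier coefficient of $G(x;\kappa,0)$ really is $0$, exactly as oddness predicts. The Bernoulli polynomial sits inside the $m\neq0$ coefficients, via the slowly decaying $|\xi|^{2s-1}$ part of $\widehat{g}$ (which also means the full Fourier series of $G$ is only conditionally convergent, not ``Gaussian-small in $m$''). Your Hurwitz-zeta detour — splitting off $\tfrac{\pi^{-s-1/2}}{2}\Gamma(s+\tfrac12)|w|^{-2s}\sgn(w)$ and using $\zeta(0,x)-\zeta(0,1-x)=1-2x$ — is a legitimate alternative way to produce $-\Bb_1(x)$, but as you have assembled the argument it either double counts this contribution (if the correct $\widehat{g}$ is used for $m\neq0$) or reaches the right answer only because the omission in your transform happens to equal the piece you reinsert by hand; note also that the remainder $g-g_{\mathrm{pow}}$ is $O(|w|^{-2\Re(s)})$ at infinity, not Gaussian, so its Fourier analysis still has to be done. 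The paper's route avoids all of this: set $h(x,s):=G(x;\kappa,s)+\Bb_1(x)$, observe it is bounded, odd and $1$-periodic (so its zeroth coefficient vanishes), and compute $\int_0^1 h(x,s)\ebf(-mx)\,dx=\widehat{g}(-m;\kappa,s)-\tfrac{1}{2\pi i m}$, where the power term of $\widehat{g}$ cancels against the Fourier coefficient of $\Bb_1$, leaving $i\,g(m;\kappa^{-1},\tfrac12)$ as $s\to0$.
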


\begin{proof}
It is easy to see that the function $h(x, s) := G(x; \kappa, s) + \Bb_1(x)$ is bounded, odd and 1-periodic on $\Rb$ for $0 \ge  \Re(s) > -1/2$. 
Therefore, we just need to calculate its $m$\tth Fourier coefficients for $m \neq 0$.
In this case, we start with
$$
\int_0^1 h(x, s) \ebf(-mx) dx = \widehat{g}(-m; \kappa, s) - \frac{1}{2 \pi i m},
$$
which becomes the $m$\tth Fourier coefficient of $h(x, 0)$ after taking $s \to 0$ from the left by the dominated convergence theorem. A standard calculation gives us
$$
\widehat{g}(\widehat{w}; \kappa, s) := \int_{-\infty}^\infty g(w;\kappa, s) \ebf(w \widehat{w}) dw = i
\lp \frac{\sgn(\widehat{w})}{|\widehat{w}|^{1-2s}} \frac{\Gamma(1-s)}{2 \pi^{1-s}} -
  g(\widehat{w}, \kappa^{-1}, 1/2 - s) \rp
$$
for $-1/2 < \Re(s) < 1$.
Taking $ s \to 0$ and using the oddness of $g$ then gives us  \eqref{eq:GFe}.
\end{proof}

\begin{proof}[Proof of Lemma~\ref{lemma:abound}]
  Suppose $\iota(X) = 1$ and $\ell = \ell_X$.  Then $m = Nk^2$ for some $k \in \Qb_{> 0}$ and $\bar{\Gamma}_X$ is trivial. 
Write $\sigma_{\ell}^{-1} X = k \smat{1}{-2r}{}{-1}$ with $r = r_X \in \Qb$ as in  \eqref{eq:sigmaX}, $\alpha = \alpha_{\ell}$.
Arguing as in Lemma 5.2 of \cite{BF06} gives us
\begin{align*}
a(\sqrt{v}X, z, \tpsi^0) &= \sum_{\gamma \in \Gamma_{\ell}} \tpsi^0(\sqrt{v} X, \gamma z) + O(e^{-C mv y_\ell^2}).
\end{align*}
Using  \eqref{eq:sigmal}, we can rewrite
$$
\sum_{\gamma \in \Gamma_{\ell}} \tpsi^0(\sqrt{v} X, \gamma z)
= \sum_{n \in \Zb} \tpsi^0(\sqrt{vm/N} \smat{1}{2(-r + \alpha n)}{}{-1}, z_\ell) = G \lp \frac{x_\ell - r}{\alpha}; \frac{4mv\alpha^2}{y_\ell^2}, 0 \rp.
$$
Applying Lemma \ref{lemma:FT} then finishes the proof for $\ell = \ell_X$. The case with $\ell = \ell_{-X}$ is similar.
\end{proof}

The asymptotic behavior of $\Theta_{0,h}(\tau, z, \tpsi)$ can be calculated analogously.

\subsection{Theta Integral and Main Theorem.}
\label{subsec:input}
In this section, we will state the main theorem, which evaluates the integral in  \eqref{eq:I} explicitly when $\eta$ is a meromorphic {differential of the third kind} on $M^*$. 
For a finite set $S \subset M^*$ containing $S(\eta)$, we define
\begin{equation}
\label{eq:regI}
I(\tau, \eta) := \lim_{\epsilon \to 0} 
\lp 
\int_{M^*_{S, \epsilon}}  \eta \wedge \Theta(\tau, z, \varphi_\Sh)   d\zbar
+  \sum_{\ell \in \Gamma \backslash \Iso(V)} \frac{r_{\ell}(\eta) \varepsilon_\ell}{\sqrt{N} }  \Theta_{\ell}(\tau) \frac{\log \epsilon}{ \pi i } \rp.
\end{equation}
Note that the second term is present only when $\eta$ is not rapidly decaying at the cusps, i.e., $S_\infty(\eta) \neq \emptyset$.
This agrees with the truncation in the usual parameter $T = - \frac{\log \epsilon}{2\pi}$.
\begin{prop}
  \label{prop:exist}
  The limit in \eqref{eq:regI} exists and defines a real-analytic modular form that transforms with respect to the Weil representation $\rho_L$ of weight $3/2$.
\end{prop}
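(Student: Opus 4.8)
The plan is to analyze the regularized integral \eqref{eq:regI} by splitting $M^*_{S,\epsilon}$ into the fundamental domain $\Fc_T(\Gamma)$ together with the punctured $\epsilon$-disks around the points of $\Sing^\circ(\eta)$, and to treat the two issues separately: convergence of the integral and modularity of the result. For the convergence near the finite singularities $\zeta \in \Sing^\circ(\eta)$, recall from Proposition \ref{prop:growth} that $\Theta(\tau,z,\varphi_\Sh)$ is real-analytic on all of $M$ (it extends by continuity even to $M^*$), so $\eta \wedge \Theta(\tau,z,\varphi_\Sh)\,d\zbar$ has only a simple pole at $\zeta$ coming from $\eta$; since $\Theta\,d\zbar$ is a smooth $(0,1)$-form, the integral over $B_{\epsilon_0}(\zeta)\setminus B_\epsilon(\zeta)$ converges as $\epsilon \to 0$ by \eqref{eq:Cauchy} with $h$ the smooth function $z \mapsto \Theta(\tau, z,\varphi_\Sh)\partial_\zbar$ — in fact the limit is finite with no $\log\epsilon$ divergence, because a simple pole wedged with a smooth $(0,1)$-form is locally integrable in two real dimensions. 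The only genuine divergence is at the cusps, where by \eqref{eq:growth} one has $\Theta(\tau,z,\varphi_\Sh) = \tfrac{1}{\sqrt N \beta_\ell}\Theta_\ell(\tau) + O(e^{-Cy_\ell^2\min(v,1/v)})$ and $\eta = (r_\ell(\eta) + O(e^{-Cy_\ell}))\,dz_\ell$; wedging these, the leading term contributes $\int^{-\log\epsilon/2\pi} \tfrac{r_\ell(\eta)}{\sqrt N\beta_\ell}\Theta_\ell(\tau)\,(dz_\ell\wedge d\bar z_\ell)$ over the width-$\alpha_\ell$ strip, which produces exactly a $\tfrac{r_\ell(\eta)\varepsilon_\ell}{\sqrt N}\Theta_\ell(\tau)\cdot\tfrac{\log\epsilon}{\pi i}$ term (using $\alpha_\ell = \varepsilon_\ell\beta_\ell$ and $dz_\ell\wedge d\bar z_\ell = -2i\,dx_\ell\,dy_\ell$) that cancels against the second summand in \eqref{eq:regI}, while the exponentially small remainder integrates to a convergent quantity. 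This establishes existence of the limit.

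For real-analyticity in $\tau$: the integrand $\eta\wedge\Theta(\tau,z,\varphi_\Sh)\,d\zbar$ is real-analytic in $\tau$ uniformly on $M^*_{S,\epsilon}$ for fixed $\epsilon$ (the theta series converges locally uniformly together with all $\tau$-derivatives, by the Gaussian decay in $X$), and the counterterm $\tfrac{r_\ell(\eta)\varepsilon_\ell}{\sqrt N}\Theta_\ell(\tau)\tfrac{\log\epsilon}{\pi i}$ is manifestly real-analytic; since the convergence as $\epsilon\to 0$ is locally uniform in $\tau$ (the error estimates above are uniform on compact $\tau$-sets), the limit $I(\tau,\eta)$ is real-analytic.

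For the transformation law: one argues that $I(\tau,\eta)$ inherits the weight $3/2$, representation $\rho_L$ transformation directly from $\Theta(\tau,z,\varphi_\Sh)$, which by \cite{Shintani75} satisfies $\Theta(\cdot\tau,z,\varphi_\Sh)\mid$-transforms with $\rho_L$ and weight $3/2$ (as a function of $z$ it has weight $2$, matching the $d\zbar$). The subtlety is that the regularization term involves $\Theta_\ell(\tau)$, which also transforms with weight $3/2$ and $\rho_L$ (stated after \eqref{eq:Thetal}), so the counterterm transforms the same way; hence the whole regularized expression does. Concretely, for $(M,\phi)\in\Gamma'$, write $I(M\tau,\eta) = \lim_\epsilon(\int_{M^*_{S,\epsilon}}\eta\wedge\Theta(M\tau,z,\varphi_\Sh)\,d\zbar + \sum_\ell\cdots\Theta_\ell(M\tau)\cdots)$, substitute the two transformation laws, and pull $\phi(\tau)^{3}\rho_L((M,\phi))$ out of both the integral and the sum. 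The main obstacle is the interchange of the $\epsilon\to 0$ limit with the transformation — i.e., making rigorous that the regularization is ``compatible'' with the $\Mp_2$-action — which I would handle by noting that the counterterm depends only on the cusp data $(r_\ell(\eta),\varepsilon_\ell)$ and on $\Theta_\ell(\tau)$, none of which involves $\epsilon$ beyond the explicit $\log\epsilon$ factor, so the transformation acts termwise and uniformly in $\epsilon$; alternatively one could use the description of the fundamental domain $\Fc_T(\Gamma)$ in Section \ref{subsec:cusp} to carry out the unfolding explicitly. I expect the cusp bookkeeping — tracking widths $\alpha_\ell,\beta_\ell,\varepsilon_\ell$ and the precise constant in front of $\log\epsilon$ — to be the most delicate computational point, but it is exactly the computation already set up in \eqref{eq:regI}.
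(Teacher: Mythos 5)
Your argument is correct and follows essentially the same route as the paper's (much terser) proof: local integrability of the simple poles of $\eta$ against the smooth theta form, cancellation of the $\log\epsilon$ divergence at the cusps via Proposition \ref{prop:growth}, and modularity of the truncated expression (integral plus counterterm) for each fixed $\epsilon$, which passes to the limit. The only cosmetic slip is the appeal to \eqref{eq:Cauchy} for the finite singularities --- that identity concerns boundary contour integrals rather than the area integral --- but your parenthetical justification (a simple pole wedged with a smooth $(0,1)$-form is locally integrable in two real dimensions) is exactly the right reason.
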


\begin{proof}
  Since $\eta$ has only simple poles on $M$, the limit in $T$ exists by Prop.\ \ref{prop:growth}.
  Since the integral over $M^*_{\mds, \epsilon}$ is modular of weight $3/2$ as a function in $\tau$ for any $\epsilon > 0$ sufficiently small, so is $I(\tau, \eta)$.
  \end{proof}

For $m \in \Qb$ and $h \in L'/L$, we define the $m$\tth trace of $\eta$ by
\begin{equation}
  \label{eq:trace}
  \Tr_{m, h}(\eta) :=
  \begin{cases}
    \frac{1}{2\pi i} \sum_{X \in \Gamma \backslash L_{m, h}}    \int_{c(X)} \eta, & m > 0, \\
0, & \text{otherwise.}
  \end{cases}
\end{equation}
The main result is as follows.
\begin{thm}\label{thm:FE}
Let $L$ be an lattice as in Section \ref{subsec:modcurve} and $\eta$ be a meromorphic {differential of the third kind} on $M^* = M^*_L$ with residue divisor $\sum_{\zeta \in M^*} r_\zeta(\eta) \cdot [\zeta]$. Then the components of the vector-valued modular form $I(\tau, \eta) \in \Ac_{3/2}(\rho_L)$ have the expansion
   \begin{align*}
\frac{1}{2\pi i} I_h(\tau, \eta)&= 
\sum_{m \in \Qb_{>0}}  \Tr_{m, h}(\eta) q^m 
 - \sum_{\zeta \in M^*}  r_{\zeta}(\eta) \sum_{m \in \Qb} \Theta_{m,h}(v, \zeta,\tpsi^0)q^m.
\end{align*}
Under the lowering operator $L_{\tau}$, we have 
\[
L_\tau I(\tau, \eta) = - 2\pi i \Theta(\tau, \res(\eta), \psi).
\]
\end{thm}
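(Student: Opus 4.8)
The plan is to compute $I(\tau,\eta)$ by unfolding the integral in \eqref{eq:regI} against the Fourier expansion of the Shintani theta function. First I would insert the Fourier decomposition \eqref{eq:decomp0}--\eqref{eq:decomp1} of $\Theta_h(\tau,z,\varphi_\Sh)$ into the regularized integral; the term-by-term approach is justified by the exponential decay in Proposition~\ref{prop:growth}. The constant term $\Theta_{0,h}$ and the cuspidal correction with $\Theta_\ell$ in \eqref{eq:regI} should combine with the boundary contributions coming from the small circles $\partial B_\epsilon(\zeta)$ around the points $\zeta\in S$; here I would use the Cauchy-type residue formula \eqref{eq:Cauchy} together with the growth statement \eqref{eq:growth} relating $\Theta(\tau,z,\varphi_\Sh)$ near a cusp $\ell$ to $\frac{1}{\sqrt{N}\beta_\ell}\Theta_\ell(\tau)$. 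For the positive-index part, for each $m>0$ I would replace the $\Gamma$-orbit sum $a(\sqrt v X,z,\varphi_\Sh^0)d\bar z$ by $\overline\partial\bigl(a(\sqrt v X,z,\tpsi^0)\bigr)$ using Lemma~\ref{lemma:diff_eq}(i), apply Stokes' theorem on $M^*_{S,\epsilon}$, and use the current equation \eqref{eq:diff_eq} to produce the cycle integral $\int_{c(X)}\eta$ plus boundary terms at the cusps controlled by Lemma~\ref{lemma:abound}. Summing over $X\in\Gamma\backslash L_{m,h}$ gives precisely $2\pi i\,\Tr_{m,h}(\eta)q^m$ minus the $\tpsi^0$-boundary terms, and collecting everything yields the stated Fourier expansion.

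For the second assertion, the clean route is to avoid re-deriving it from the Fourier expansion (which would require justifying termwise application of $L_\tau$ and handling the non-holomorphic completion terms $\Theta_{m,h}(v,\zeta,\tpsi^0)$ carefully) and instead apply $L_\tau$ directly under the integral sign in \eqref{eq:regI}. Since $L_\tau=-2iv^2\partial_{\overline\tau}$ acts only in the $\tau$-variable and the integral over the compact surface-with-boundary $M^*_{S,\epsilon}$ converges locally uniformly together with its $\tau$-derivatives, one can differentiate under the integral. By the standard identity for the Shintani theta kernel (this is the $\tau$-analogue of Lemma~\ref{lemma:diff_eq}(ii), or equivalently the well-known relation that the Shintani form and the Millson form are exchanged by the weight-raising/lowering operators in $\tau$), we have $L_\tau\bigl(\Theta(\tau,z,\varphi_\Sh)\,d\bar z\bigr)=\Theta(\tau,z,\psi)\,d\bar z$ — here I should record that $L_\tau$ applied to $\varphi_\Sh(X,\tau,z)$ returns $\psi(X,\tau,z)$, consistent with the $\tpsi$-normalization in Lemma~\ref{lemma:diff_eq}(ii) since $L_\tau\tpsi(X,\tau,z)=\psi(X,\tau,z)$ and $\overline\partial$ commutes with $L_\tau$. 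Applying $L_\tau$ to the $\log\epsilon$-correction term: $L_\tau\Theta_\ell(\tau)$ — since $\Theta_\ell$ is a holomorphic (unary, weight $3/2$) theta series, $\partial_{\overline\tau}\Theta_\ell=0$, so that term is annihilated.

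Hence $L_\tau I(\tau,\eta)=\lim_{\epsilon\to0}\int_{M^*_{S,\epsilon}}\eta\wedge\Theta(\tau,z,\psi)\,d\bar z$. To finish, I would observe that $\Theta(\tau,z,\psi)$ decays rapidly at all cusps by the second line of \eqref{eq:growth}, so no regularization is needed and the limit is simply $\int_{M^*}\eta\wedge\Theta(\tau,z,\psi)\,d\bar z$ as a convergent improper integral. The last step is to identify this with $-2\pi i\,\Theta(\tau,\res(\eta),\psi)$: since $\psi(X,z)$ is smooth (it has no singularity along $c_X$ for $Q(X)\le 0$, and the Fourier coefficients that matter are evaluated pointwise), the form $\Theta(\tau,z,\psi)\,d\bar z$ is a smooth $(0,1)$-form on $M^*$, and pairing the differential of the third kind $\eta$ against a smooth form picks out $2\pi i$ times the residues of $\eta$ weighted by the value of the smooth theta function at each pole — this is again \eqref{eq:Cauchy} applied at each $\zeta\in S(\eta)$, with the orientation sign $-\bigsqcup\partial B_\epsilon(\zeta)$ from \eqref{eq:MSe} accounting for the overall minus sign, giving $-2\pi i\sum_\zeta r_\zeta(\eta)\Theta(\tau,\zeta,\psi)=-2\pi i\,\Theta(\tau,\res(\eta),\psi)$.

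The main obstacle I anticipate is the bookkeeping in the first part: correctly matching the $\log\epsilon$-divergences from the cusp neighborhoods in the Stokes computation against the explicit $\log\epsilon/(\pi i)$ counterterm in \eqref{eq:regI}, and verifying that the Bernoulli-polynomial boundary values from Lemma~\ref{lemma:abound} and the $\Theta_{0,h}(\tau,z,\tpsi)$ asymptotics assemble exactly into the claimed $\sum_m\Theta_{m,h}(v,\zeta,\tpsi^0)q^m$ term — in particular keeping the split-hyperbolic ($\iota(X)=1$) contributions, which straddle the cusps, consistent with the definition \eqref{eq:cycle_int} of $\int_{c(X)}\eta$. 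By contrast, the derivation of $L_\tau I(\tau,\eta)$ is comparatively short once the Fourier expansion (or at least the convergence needed to differentiate under the integral) is in hand, since it only uses the smoothness and rapid decay of $\Theta(\tau,z,\psi)$ and the residue formula \eqref{eq:Cauchy}; alternatively, it follows immediately by applying $L_\tau$ termwise to the already-established Fourier expansion, using $L_\tau q^m=0$, $L_\tau(v^{a}q^{m})$-type identities on the $\tpsi^0$-terms, and Lemma~\ref{lemma:diff_eq}(ii).
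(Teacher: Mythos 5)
Your treatment of the first assertion is essentially the paper's proof: decompose $\Theta_h(\tau,z,\varphi_\Sh)$ into $\Gamma$-orbits, apply Stokes with the $\overline\partial$-primitive $\tpsi$ on $M^*_{\mds,\epsilon}$, read off the boundary contributions from Lemma~\ref{lemma:abound}, and match the $\log\epsilon$-divergences of the split-hyperbolic orbits against the $\Theta_\ell$ counterterm (the last step is exactly the identity furnished by Lemma~\ref{lemma:FCl}). No complaints there.

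Your ``clean route'' for the second assertion, however, contains a genuine error. The kernel identity you invoke, $L_\tau\varphi_\Sh(X,\tau,z)=\psi(X,\tau,z)$, is false: writing $a=(X,X(z))$, one has $\varphi_\Sh(X,\tau,z)=\sqrt{v}\,(X,W(z))e^{-\pi v a^2}\ebf(Q(X)\tau)$, and $L_\tau$ turns this into $(X,W(z))\bigl(\tfrac{v^{3/2}}{2}-\pi v^{5/2}a^2\bigr)e^{-\pi v a^2}\ebf(Q(X)\tau)$, which is not $\psi(X,\tau,z)=\tfrac{v^{3/2}a}{2}e^{-\pi v a^2}\ebf(Q(X)\tau)$. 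The correct relation, obtained by commuting $L_\tau$ with $\overline\partial_z$ in Lemma~\ref{lemma:diff_eq}, is $L_\tau\bigl(\varphi_\Sh(X,\tau,z)\,d\bar z\bigr)=\overline\partial_z\,\psi(X,\tau,z)$. Relatedly, your final localization step is unjustified as stated: for a general smooth $(0,1)$-form $\omega$ on a surface of positive genus, $\int_{M^*}\eta\wedge\omega$ does \emph{not} reduce to residues of $\eta$ (take $\eta$ holomorphic and $\omega$ representing a nontrivial Dolbeault class); formula \eqref{eq:Cauchy} is a contour integral and only becomes available after Stokes, which requires $\omega$ to be $\overline\partial$-exact. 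The two errors compensate each other: with the corrected kernel identity the integrand becomes $\eta\wedge\overline\partial\Theta(\tau,z,\psi)$, Stokes converts it into boundary integrals over the $\partial B_\epsilon(\zeta)$ (the cusps contribute nothing by the rapid decay of $\Theta(\tau,z,\psi)$ in \eqref{eq:growth}), and then \eqref{eq:Cauchy} legitimately produces $-2\pi i\,\Theta(\tau,\res(\eta),\psi)$. But the argument as you wrote it does not hold. Your fallback — apply $L_\tau$ termwise to the already-established Fourier expansion, killing the holomorphic part and using $L_\tau\tpsi(X,\tau,z)=\psi(X,\tau,z)$ on the completion terms (the counterterm $\Theta_\ell$ being holomorphic is indeed annihilated) — is the correct route and the one the paper intends; it should be promoted to the main argument, as it also sidesteps the need to justify differentiating under a regularized limit.
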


\subsection{Orbital integrals and Proof of Main Theorem}
\label{sec:OrbBound}
Let $\eta$ be a fixed meromorphic 1-form of the third kind and set $\mds := \mds(\eta) \cup (\Gamma \backslash \Iso(V)) \subset M^*$.

We first consider the non-constant coefficients. Let $X \in \Gamma \backslash L_{m, h}$ with $m \neq 0$ and consider for $\epsilon > 0$,  the orbital integral
 \begin{equation}
  \label{eq:orb_int}
I_X(v, \eta; \mds, \epsilon) :=  \int_{M^*_{\mds, \epsilon}}  \eta \wedge a(\sqrt{v} X, z, \varphi^0_\Sh) d\zbar,
\end{equation}
where $a(X, z, \varphi^0_\Sh)$ is defined in  \eqref{eq:decomp1}. We directly see

\begin{prop}
\label{prop:Stokes}
The integral defining $I_X(v, \eta; \mds, \epsilon)$ converges for all $m \in \Qb^\times$, and even termwisely when $\sqrt{Nm} \not\in \Qb$.
Furthermore, we have
\begin{equation}
  \label{eq:Stokes}
  I_X(v, \eta; \mds, \epsilon) = 
 \int_{c(X; \mds, \epsilon)} \eta + \int_{\partial M^*_{\mds, \epsilon}} 
a(\sqrt{v} X, z, \tpsi^0)  \eta.
\end{equation}
\end{prop}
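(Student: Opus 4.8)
The plan is to prove the convergence statements by an unfolding argument and to establish \eqref{eq:Stokes} by Stokes' theorem, using $a(\sqrt{v}X,z,\tpsi^0)$ as a $\bar\partial$-primitive of $a(\sqrt{v}X,z,\varphi^0_\Sh)\,d\bar z$ that jumps across the geodesic $c(X)$; the identity \eqref{eq:Stokes} is then just the orbital analogue of the current equation \eqref{eq:diff_eq} adapted to a surface with boundary.

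First, convergence. Since $\mds$ contains $\mds(\eta)$ together with all cusps, $M^*_{\mds,\epsilon}$ is compact and $\eta$ restricts to a smooth $1$-form on it. By the absolute and locally uniform convergence of the theta series $\Theta(\tau,z,\varphi_\Sh)$ together with the decomposition \eqref{eq:decomp1}, the orbital sum $a(\sqrt{v}X,z,\varphi^0_\Sh)$ defines a continuous (in fact real-analytic) function on $M^*_{\mds,\epsilon}$, so $I_X(v,\eta;\mds,\epsilon)$ is the integral of a continuous $2$-form over a compact region, hence finite, for every $m\in\Qb^\times$. When $\sqrt{Nm}\notin\Qb$ the stabilizer $\Gamma_X$ is infinite cyclic and $c(X)=\Gamma_X\backslash c_X$ is a compact geodesic cycle; unfolding the sum over $\Gamma_X\backslash\Gamma$ against $\pi^*\eta$ on $\Gamma_X\backslash\Dc$ and using $|\varphi^0_\Sh(\sqrt{v}X,z)|\ll e^{-\pi v(X,X(z))^2}$, which decays like a Gaussian in the hyperbolic distance to $c_X$, against the at most polynomial growth of $\pi^*\eta$, one bounds $\sum_{\gamma}\big|\int_{M^*_{\mds,\epsilon}}\eta\wedge\varphi^0_\Sh(\gamma^{-1}\sqrt{v}X,z)\,d\bar z\big|$ by a finite integral over $\Gamma_X\backslash\widetilde{M}^*_{\mds,\epsilon}$, which gives the claimed termwise convergence.

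For \eqref{eq:Stokes} I would first record the local picture from Lemma~\ref{lemma:diff_eq}(i): away from $c_Y$ one has $\bar\partial_z\tpsi^0(\sqrt{v}Y,z)=\varphi^0_\Sh(\sqrt{v}Y,z)\,d\bar z$, while $\tpsi^0(\sqrt{v}Y,z)$ has a jump of size $\pm1$ across $c_Y$ (its odd error-function profile has limiting values $\mp\tfrac12$), the sign governed by $\sgn(Y,X(z))$. Summing over $\gamma\in\Gamma_X\backslash\Gamma$ — the interchange with $\bar\partial$ being legitimate by local uniform convergence, and the resulting function being bounded near the cusps by Lemma~\ref{lemma:abound} — the $\Gamma$-invariant function $\alpha:=a(\sqrt{v}X,z,\tpsi^0)$ descends to $M$, is real-analytic on $M\setminus c(X)$ with $\bar\partial\alpha=a(\sqrt{v}X,z,\varphi^0_\Sh)\,d\bar z$ there, and jumps by $\pm1$ across $c(X)$ traversed with the orientation of Section~\ref{subsec:cycle_int}. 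Since $\eta$ is holomorphic, hence closed, on $M^*_{\mds,\epsilon}$, and since $\partial\alpha\wedge\eta=0$ for the $(1,0)$-form $\eta$ on a Riemann surface, one obtains the pointwise identity $\eta\wedge a(\sqrt{v}X,z,\varphi^0_\Sh)\,d\bar z=-\,d(\alpha\,\eta)$ on $M^*_{\mds,\epsilon}\setminus c(X)$. Applying Stokes' theorem on $M^*_{\mds,\epsilon}$ with an $\delta$-tubular neighborhood of $c(X;\mds,\epsilon)$ excised and letting $\delta\to0$, the portion of the new boundary hugging $c(X)$ contributes the cycle integral $\int_{c(X;\mds,\epsilon)}\eta$ through the unit jump of $\alpha$, while the outer boundary contributes $\int_{\partial M^*_{\mds,\epsilon}}a(\sqrt{v}X,z,\tpsi^0)\,\eta$; with the orientation of $\partial M^*_{\mds,\epsilon}$ fixed as in \eqref{eq:MSe} this yields \eqref{eq:Stokes}.

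I expect the main technical obstacle to be the bookkeeping near the finitely many points where $c(X;\mds,\epsilon)$ meets $\partial M^*_{\mds,\epsilon}$, where the excised tube abuts the boundary and the two boundary pieces must be matched up consistently, together with the accompanying orientation and sign conventions; and, in the split-hyperbolic case $\sqrt{Nm}\in\Qb$, the verification that the excision-and-limit argument remains valid even though $c(X)$ is non-compact and runs out to the excised cusps (where one leans on the decay estimate \eqref{eq:abound1} and the boundedness in \eqref{eq:abound2}).
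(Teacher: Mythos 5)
Your proposal is correct and follows essentially the same route as the paper, whose proof consists of exactly these two ingredients: the convergence (including the termwise convergence via unfolding over $\Gamma_X\backslash\Gamma$ against the Gaussian decay of $\varphi^0_\Sh$ in $(X,X(z))$) is quoted from the analogous argument in \cite{BF06}, and the identity \eqref{eq:Stokes} is obtained by applying Stokes' theorem to $d\bigl(a(\sqrt{v}X,z,\tpsi^0)\,\eta\bigr)$ using Lemma~\ref{lemma:diff_eq}(i) and the unit jump of $\tpsi^0$ across $c_X$, exactly as you describe. The only small imprecision is that for $m<0$ (where $\sqrt{Nm}\notin\Qb$ automatically) the stabilizer $\Gamma_X$ is finite rather than infinite cyclic and there is no geodesic, but the same unfolding-plus-Gaussian-decay bound applies since $(X,X(z))$ is then bounded away from zero.
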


\begin{proof}
The convergence follows from the same argument in \cite{BF06}.
The rest is simply an application of Lemma \ref{lemma:diff_eq} and Stokes' theorem.
\end{proof}
\begin{prop}
\label{prop:orbit1}
Suppose $X \in L_{m, h}$ with $m \neq 0$. Then
\begin{equation}
\label{eq:orb1}
\lim_{\epsilon \to 0}
\lp I_X(v, \eta; \mds, \epsilon)  -
\iota(X) ( r_{\ell_X}(\eta) - r_{\ell_{-X}}(\eta))  \frac{\log \epsilon}{2\pi i}\rp =
 \left( \int_{c(X)} \eta \right)- 2\pi i a(\sqrt{v} X, \res(\eta), \tpsi^0),
\end{equation}
where $\iota(X)$ and $\int_{c(X)} \eta$ are defined in  \eqref{eq:iota} and \eqref{eq:cycle_int} respectively.
\end{prop}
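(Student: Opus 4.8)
The plan is to start from the Stokes identity in Proposition~\ref{prop:Stokes}, which already expresses $I_X(v,\eta;\mds,\epsilon)$ as $\int_{c(X;\mds,\epsilon)}\eta$ plus a boundary term $\int_{\partial M^*_{\mds,\epsilon}} a(\sqrt{v}X,z,\tpsi^0)\,\eta$, and to take the limit $\epsilon\to 0$ of each piece separately. The boundary $\partial M^*_{\mds,\epsilon}$ is the disjoint union of the small circles $-\partial B_\epsilon(\zeta)$ over $\zeta\in\mds=\mds(\eta)\cup(\Gamma\backslash\Iso(V))$, so I would split the boundary integral accordingly and handle interior points of $\mds(\eta)$ separately from the cusps in $\Gamma\backslash\Iso(V)$. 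For $\zeta\in\mds^\circ(\eta)$ an interior singularity of $\eta$, the function $a(\sqrt{v}X,z,\tpsi^0)$ is real-analytic in a neighborhood of $\zeta$ (its only singularities lie along translates of the geodesic $c_X$, and near an interior point these do not contribute a boundary term of the circle integral in the limit), so the Cauchy-type formula \eqref{eq:Cauchy} gives $\lim_{\epsilon\to 0}\int_{-\partial B_\epsilon(\zeta)} a(\sqrt{v}X,z,\tpsi^0)\,\eta = -2\pi i\, a(\sqrt{v}X,\zeta,\tpsi^0)\, r_\zeta(\eta)$. Summing over all $\zeta\in M^*$ with the appropriate residues reconstructs the term $-2\pi i\, a(\sqrt{v}X,\res(\eta),\tpsi^0)$ on the right-hand side of \eqref{eq:orb1}, once the cusp contributions are accounted for.

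The contribution of the cusps is where the divergent term $\iota(X)(r_{\ell_X}(\eta)-r_{\ell_{-X}}(\eta))\frac{\log\epsilon}{2\pi i}$ enters. Near a cusp $\ell$, expand $\eta = (r_\ell(\eta)+O(e^{-Cy_\ell}))\,dz_\ell$ in the local coordinate, and use the asymptotic expansion of $a(\sqrt{v}X,z,\tpsi^0)$ from Lemma~\ref{lemma:abound}: when $\iota(X)=1$ and $[\ell]=[\ell_{\pm X}]$ it equals $\mp\Bb_1\!\big(\tfrac{x_\ell - r_{\pm X}}{\alpha_\ell}\big)$ up to exponentially small error, and otherwise it is exponentially small. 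Integrating $\Bb_1\!\big(\tfrac{x_\ell-r_{\pm X}}{\alpha_\ell}\big)\,r_\ell(\eta)\,dz_\ell$ over the boundary circle near the cusp $\ell$ (which in the coordinate $z_\ell$ is a horizontal segment at height $y_\ell=-\tfrac{\log\epsilon}{2\pi}$ traversed appropriately) produces, after using $\int_0^{\alpha_\ell}\Bb_1(x/\alpha_\ell)\,dx=0$ for the bounded Fourier part and the constant leading term of the $z_\ell$-integration, precisely the $\frac{\log\epsilon}{2\pi i}$ term that cancels against the regularizing term on the left. This matches exactly the shape of the regularization built into the definition \eqref{eq:cycle_int} of $\int_{c(X)}\eta$. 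I would organize this so that the divergent pieces of $\int_{c(X;\mds,\epsilon)}\eta$ (which, by \eqref{eq:cycle_int} and its existence proof, also carry a $-\iota(X)(r_{\ell_X}(\eta)-r_{\ell_{-X}}(\eta))\frac{\log\epsilon}{2\pi i}$ term) and of the boundary term combine to give the finite left-hand side, leaving $\int_{c(X)}\eta$ in the limit.

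The main obstacle I expect is the careful bookkeeping at the cusps: matching the orientation conventions on $\partial B_\epsilon(\ell)$ versus $c(X;\mds,\epsilon)$, tracking the signs $\delta_\ell(X)=\pm1$ encoding whether $\ell=\ell_X$ or $\ell=\ell_{-X}$, and verifying that the $O(e^{-Cy_\ell})$ correction to $\eta$ times the bounded function $a(\sqrt{v}X,z,\tpsi^0)$ integrated over a circle of ever-increasing $y_\ell$ still goes to zero (it does, since the circumference in the $z_\ell$-coordinate is bounded while the integrand decays). A secondary technical point is justifying that for interior points $\zeta$ lying on the geodesic $c_X$, the principal-value prescription in \eqref{eq:cycle_int} is exactly the one needed so that the deformation of $c(X)$ around $\zeta$ and the boundary circle integral of $a(\sqrt{v}X,z,\tpsi^0)\,\eta$ are compatible; this is handled by the same averaging-of-two-deformations argument used in the proof that \eqref{eq:cycle_int} exists, since $a(\sqrt{v}X,z,\tpsi^0)$ has at worst a jump discontinuity across $c_X$ by the $\sgn(X,X(z))$ factor in $\tpsi$. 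Once these are dispatched, assembling the pieces gives \eqref{eq:orb1}.
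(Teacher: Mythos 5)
Your overall strategy---Stokes via Proposition \ref{prop:Stokes}, then the Cauchy formula \eqref{eq:Cauchy} at the interior poles and the cusp asymptotics of Lemma \ref{lemma:abound} at the boundary circles---is the same route the paper takes, and the interior part of your argument is sound (including the observation that the jump of $\tpsi$ across $c_X$ is compatible with the principal-value prescription, since $\tpsi$ is set to $0$ on $c_X$, the average of its two one-sided limits). The genuine problem is your accounting of the $\log\epsilon$ divergence at the cusps. In the coordinate $z_\ell$, the boundary component $\partial B_\epsilon(\ell)$ is a horocycle segment of \emph{fixed} length $\alpha_\ell$ at height $y_\ell=T=-\log\epsilon/2\pi$, on which $\eta=(r_\ell(\eta)+O(e^{-Cy_\ell}))\,dx_\ell$ and $a(\sqrt{v}X,z,\tpsi^0)=\mp\Bb_1\lp\tfrac{x_\ell-r_{\pm X}}{\alpha_\ell}\rp+O(e^{-C\cdots})$. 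Since $\Bb_1$ has mean zero over a period and the segment has bounded length, this boundary integral tends to $0$ as $\epsilon\to 0$: it produces no $\log\epsilon$ term at all, consistently with the convention \eqref{eq:tpsicusp} that cusps contribute nothing to $a(\sqrt{v}X,\res(\eta),\tpsi^0)$ when $m\neq 0$. The entire divergence $\iota(X)(r_{\ell_X}(\eta)-r_{\ell_{-X}}(\eta))\tfrac{\log\epsilon}{2\pi i}$ lives in the truncated cycle integral $\int_{c(X;\mds,\epsilon)}\eta$, where the geodesic runs vertically into the cusp, and it is absorbed exactly by the subtraction built into the definition \eqref{eq:cycle_int}. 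As written, your proof double-counts: you assert both that the boundary term produces the $\log\epsilon$ term and, in the parenthetical, that the truncated cycle integral carries the same divergence; if both were true, subtracting a single copy of that term could not yield a finite limit.

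The likely source of the confusion is the paper's separate existence argument, in which the \emph{two-dimensional} integral $\int_{\Fc_T-\Fc_{T_0}}\eta\wedge a(\sqrt{v}X,z,\varphi^0_\Sh)\,d\overline{z_\ell}$ genuinely grows like $\pm i\,r_\ell(\eta)\,T$; there the relevant asymptotic \eqref{eq:abound1} has the nonzero constant leading term $\mp\tfrac{1}{2\alpha_\ell}$ rather than the mean-zero sawtooth, and integrating that constant over $T_0\le y_\ell\le T$ produces the linear growth in $T$. That computation concerns the left-hand side $I_X(v,\eta;\mds,\epsilon)$ itself, not the one-dimensional boundary term $\int_{\partial B_\epsilon(\ell)}a(\sqrt{v}X,z,\tpsi^0)\,\eta$. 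Once you correct the cusp bookkeeping---boundary term tends to $0$, divergence entirely inside $\int_{c(X;\mds,\epsilon)}\eta$---the pieces assemble to \eqref{eq:orb1} exactly as you intend.
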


\begin{proof}
We first subtract $\iota(X)( r_{\ell_X}(\eta) - r_{\ell_{-X}}(\eta))  \frac{\log \epsilon}{2\pi i}$ from both sides of  \eqref{eq:Stokes}. Taking the limit $\epsilon \to 0$ on the right hand side gives $2\pi i a(\sqrt{v} X, \res(\eta), \tpsi^0) - \int_{c(X)} \eta$ by  \eqref{eq:Cauchy}, \eqref{eq:cycle_int} and  $ \partial M^*_{\mds, \epsilon} = - \bigsqcup_{\zeta \in \mds} \partial B_\epsilon(\zeta) $ for $\epsilon > 0$ small.

Now for $\zeta \in M$ and $\epsilon > 0$ sufficiently small, the integral $\int_{B_\epsilon(\zeta)} \eta \wedge a(\sqrt{v} X, z, \varphi^0_\Sh) d \zbar$ converges since $\eta$ has at most a simple pole at $\zeta$.
If $\iota(X) = 0$, then $c(X)$ is compact in $M$ and the limit in $\epsilon$ exists.
Therefore, it suffices to consider $\iota(X) = 1$ and show that the following limit exists
\begin{equation}
\label{eq:cusp_reg}
\lim_{\epsilon \to 0} \int_{\Fc_T(\Gamma) - \Fc_{T_0}(\Gamma)} \eta \wedge a(\sqrt{v} X, z, \varphi^0_\Sh) d\zbar
- 
( r_{\ell_X}(\eta) - r_{\ell_{-X}}(\eta))  \frac{\log \epsilon}{2\pi i},
\end{equation}
where $T = -\log \epsilon / 2\pi$ and $T_0 > 1$ is a fixed constant depending on $\eta$ and $ \Gamma$. 
We can now decompose $\Fc_T(\Gamma) - \Fc_{T_0}(\Gamma) = \bigsqcup_{[\ell] \in \Gamma \backslash \Iso(V)} \sigma_\ell (\Fc_T - \Fc_{T_0})$.
Recall that $z_\ell = x_\ell + i y_\ell = \sigma_\ell^{-1} z$ and $dz_\ell \wedge d\overline{z_\ell} = -2i dx_\ell \wedge dy_\ell$.
By Lemma \ref{lemma:abound} below, we have
$$
\int_{\Fc_T - \Fc_{T_0}} \eta \wedge a(\sqrt{v} X, z_\ell, \varphi^0_\Sh) d \overline{z_\ell}
= 
\mp \frac{1}{2 \alpha_\ell} \int^T_{T_0} \int_{0}^{\alpha_\ell} r_\ell(\eta)  (-2i) dx_\ell dy_\ell + O(1) = \pm i r_\ell(\eta) T + O(1)
$$
when $[\ell ] = [\ell_{\pm X}]$, and $O(1)$ otherwise. Adding these over all $[\ell] \in \Gamma \backslash \Iso(V)$ shows that the limit in  \eqref{eq:cusp_reg} exists.
\end{proof}

By the decomposition \eqref{eq:decomp0}, this takes care of the non-constant coefficients of $I(\tau, \eta)$ since 
$$
-\sum_{\ell \in \Gamma \backslash \Iso(V)} \frac{2r_\ell(\eta)\varepsilon_\ell}{\sqrt{N}} \Theta_\ell(\tau)
=
\sum_{h \in L'/L} \ef_h \sum_{m \in \Qb^\times} \ebf(m\tau) \sum_{X \in \Gamma \backslash L_{m ,h}} \iota(X)(r_{\ell_X}(\eta) - r_{\ell_{-X}}(\eta)). 
$$
Indeed, this is a direct consequence of Lemma \ref{lemma:FCl} as
\begin{align*}
-\sum_{[\ell] \in \Gamma \backslash \Iso(V)} \frac{2r_\ell(\eta)\varepsilon_\ell b_\ell(m, h)}{\sqrt{N}} 
&=  
\sum_{[\ell] \in \Gamma \backslash \Iso(V)}
\sum_{X \in \Gamma_\ell \backslash (L_{m, h} \cap \ell^\perp)} r_\ell(\eta) \delta_\ell(X) \\&=
\sum_{X \in \Gamma \backslash L_{m ,h}} \iota(X)(r_{\ell_X}(\eta) - r_{\ell_{-X}}(\eta)).
\end{align*}
From the definition of $I(\tau, \eta)$ in \eqref{eq:regI}, the constant term is given by 
$$
I_{0,h}(\tau, \eta) := \lim_{\epsilon \to 0} 
\int_{M^*_{S, \epsilon}}  \eta \wedge
\Theta_{0,h}(\tau, z, \varphi_\Sh)d\zbar.
$$
Applying Stokes' Theorem as in Prop.\ \ref{prop:Stokes} and Lemma \ref{lemma:abound} we obtain
\begin{align*}
I_{0,h}(\tau, \eta) &=  \lim_{\epsilon \to 0} 
\int_{\partial M^*_{S, \epsilon}}\Theta_{0,h}(\tau, z, \tpsi) \eta \\ &=
-  \sum_{\zeta \in M} r_{\zeta}(\eta)\sum_{m \in \Qb} \Theta_{0,h}(v, \zeta,\tpsi^0)  -  \lim_{\epsilon \to 0}  \sum_{[\ell] \in \Gamma \backslash \Iso(V)} \int_{\partial B_\epsilon([\ell])} \Theta_{0,h}(\tau, z, \tpsi) \eta 
\\&=
-  \sum_{\zeta \in M} r_{\zeta}(\eta)\sum_{m \in \Qb} \Theta_{0,h}(v, \zeta,\tpsi^0) +
\sum_{[\ell] \in \Gamma \backslash \Iso(V)} r_{[\ell]}(\eta) \Bb_1 \lp \frac{h_\ell}{\beta_\ell} \rp\\
&= -  \sum_{\zeta \in M^*} r_{\zeta}(\eta)\sum_{m \in \Qb} \Theta_{0,h}(v, \zeta,\tpsi^0).
\end{align*}
This completes the proof of Theorem \ref{thm:FE}.

\subsection{Proof of Theorem \ref{thm:scalarN}.}
For $N \in \Nb$, let $L\subset V$ be the lattice in Example \ref{ex:N}. Then $\Gamma = \Gamma_0(N) = \Gamma_L \cap \SO^+(L)$ and we have a bijection
\begin{equation}
\label{eq:bijection}
\begin{split}
    \bigcup_{h \in L'/L} L_{m, h} & \to \Qc_{N, 4Nm}\\
\smat{-B/(2N)}{-C/N}{A}{B/(2N)} & \mapsto [NA, B, C],
\end{split}
\end{equation}
where $\Qc_{N, 4Nm}$ is defined as in the introduction.
Note that for any $f(\tau) = \sum_{h \in L'/L} f_h(\tau) \in \Ac_k(\rho_L)$, the component $f_0(\tau)$ is in $\Ac_{k}(\Gamma_0(4N))$.
Applying the Fricke involution to $f_0(\tau)$ and the transformation formula of $\rho_L(S)$ tells us that $\mathrm{sc}(f(4N\tau)) \in \Ac_k(\Gamma_0(4N))$, where
\begin{equation}
  \label{eq:sc}
  \begin{split}
    \mathrm{sc}: \Cb[L'/L] & \to \Cb\\
\sum_{h \in L'/L} a_h \ef_h & \mapsto \sum_{h \in L'/L} a_h.
  \end{split}
\end{equation}

Fix a fundamental discriminant $\Delta < 0$ and $r \in \Zb$ such that $\Delta \equiv r^2 \pmod{4 N}$. 
We follow \cite{BO10} to define the genus character $\chi_\Delta$ on $X = \smat{- B/2N}{-C/N}{A}{B/2N} \in L'$ by 
\begin{equation}
\label{eq:genus}
\chi_\Delta(X) = \chi_{\Delta}([NA, B, C]) :=
\begin{cases}
   \leg{\Delta}{n},  & \text{if } \Delta \mid B^2 - 4NAC \text{ and } (B^2 - 4NAC)/\Delta \text{ is }\\
& \text{a square modulo } 4N \text{ and } \gcd(A, B, C , \Delta) = 1,\\
0, & \text{ otherwise,}
\end{cases}
\end{equation}
where $n$ is any integer prime to $\Delta$ represented by one of the quadratic forms $[N_1 A, B, N_2 C]$ with $N_1N_2 = N$ and $N_1, N_2 > 0$ (see also \cite[\textsection 1.2]{GKZ87} and \cite[\textsection 1]{Sko90}).
It is invariant under the action of $\Gamma_0(N)$ and all Atkin-Lehner operators.

Let $L_\Delta := (\Delta L, \frac{Q}{|\Delta|})$ the scaled lattice. Then $L'_\Delta = L'$ and there is a natural projection map $\pi: L'/L_\Delta \to L'/L$. The following group 
\begin{equation}
  \label{eq:GammaDelta}
\Gamma_\Delta := \Gamma_{L_\Delta} \cap \mathrm{SO}^+(L_\Delta)  \subset \Gamma = \Gamma_0(N)
\end{equation}
 has finite index and contains the congruence subgroup $\Gamma(|\Delta|) \cap \Gamma_0(N|\Delta|)$.
The linear map 
\begin{equation}
  \label{eq:psichi}
  \begin{split}
  \psi_{\Delta, r} : \Cb[L'/L] & \to \Cb[L'/L_\Delta] \\
\ef_h & \mapsto \frac{1}{[\Gamma: \Gamma_\Delta]} \sum_{\begin{subarray}{c} \delta \in L'/L_\Delta \\ \pi(\delta) = rh \\ \frac{Q(\delta)}{\Delta} \equiv Q(h) \bmod{\Zb}\end{subarray}} \chi_\Delta(\delta) \ef_\delta
  \end{split}
\end{equation}
intertwines the representations $\rho_L$ and $\rho_{L_\Delta}$ \cite{AE13}.
With respect to the pairings on $\Cb[L'/L]$ and $\Cb[L'/L_\Delta]$, we obtain a linear map $\psi^*_{\Delta, r}: \Ac_{k}(\rho_{L_\Delta}) \to \Ac_k(\rho_L)$.

Now, we can apply Theorem \ref{thm:FE} to the lattice $L_\Delta$. 
From the bijection \eqref{eq:bijection}, we have
$$
\mathrm{sc} \circ\psi_{\Delta, r}^* \lp \sum_{\delta \in L'/L_\Delta} \Tr_{m, \delta}(\eta) \ef_\delta\rp
= \Tr_{N, \Delta, 4Nm}(\eta)
$$
for $m > 0$, where $\Tr_{N, \Delta, d}$ was defined in \eqref{eq:GN}.
Similarly, the Fourier coefficients of the non-holomorphic part becomes
$$
\mathrm{sc} \circ\psi_{\Delta, r}^* \lp \sum_{\delta \in L'/L_\Delta} \Theta_{m, \delta}(v, \zeta, \tpsi^0) \ef_\delta\rp
 = 
- \sum_{X \in \Qc_{N, 4N |\Delta| m}}  \frac{ \chi_\Delta(X) \sgn(\mathrm{d}(X, \zeta))}{2 [\Gamma: \Gamma_\Delta]}
\erfc \lp  \sqrt{ \frac{ \pi v}{N |\Delta| }} |\mathrm{d}(X, \zeta)| \rp
$$
for all nonzero $m \in \Qb$, where we have denoted
\begin{equation}
  \label{eq:delta}
 \mathrm{d}(X, \zeta) := \frac{A|\zeta|^2 + B \Re(\zeta) + C}{ \Im(\zeta)}
\end{equation}
for $\zeta \in \Hb$ and $X = [A, B, C] \in \Zb^3$.
When $m = 0$, the coefficient $\Theta_{m, h}(v, \zeta, \tpsi^0) = 0$ unless $\zeta = [\ell]$ is a cusp, in which case
$$
\mathrm{sc} \circ\psi_{\Delta, r}^* \lp \sum_{\delta \in L'/L_\Delta} \Theta_{0, \delta}(v, [\ell], \tpsi^0) \ef_\delta\rp
 = 
- \frac{1}{[\Gamma: \Gamma_\Delta]} \sum_{\delta \in L'/L_\Delta} \chi_\Delta(\delta) \Bb_1 \lp \frac{\delta_\ell}{|\Delta| \beta_\ell}  \rp.
$$
The quantities $\chi_\Delta(\delta)$ and $\beta_\ell$ only depends the $\Gamma_0(N)$-class of $\delta$ and $\ell$. Even though $\delta_\ell$ depends on the choice of $\ell$, the averaged quantity above does not since $\Gamma_0(N)/\Gamma_\Delta$ induces an automorphism on $L'/L_\Delta$. 
If $I(\tau, \eta)$ is the integral defined in \eqref{eq:regI} for the lattice $L_\Delta$, then Theorem \ref{thm:FE} implies that 
$$
I_{\Delta, N}(\tau, \eta) := \mathrm{sc}\circ \psi^*_{\Delta, r} \lp \frac{I(4N \tau, \eta)}{2\pi i} \rp
= \sum_{d > 0, d \equiv 0, 3\bmod{4}} \Tr_{N, \Delta, d}(\eta) q^d + \Theta^*_\Delta(\tau, \eta) \in \Ac_{3/2}(\Gamma_0(4N)),
$$ 
where 
\begin{equation}
  \label{eq:Theta*}
  \begin{split}
      \Theta^*_\Delta(\tau, \eta) := &
 \sum_{\begin{subarray}{c} d > 0\\ d \equiv 0, 3\bmod{4} \end{subarray}} q^d
\sum_{\zeta \in \Gamma_0(N) \backslash \Hb} r_\zeta(\eta) 
 \sum_{X \in \Qc_{N, -\Delta d} }  \frac{ \chi_\Delta(X) \sgn(\mathrm{d}(X, \zeta))}{2} \erfc \lp  \sqrt{ \frac{4 \pi v}{ |\Delta|}} |\mathrm{d}(X, \zeta)| \rp\\
&  +\sum_{[\ell] \in \Gamma_0(N) \backslash \Pb^1(\Qb)} r_{[\ell]}(\eta) \sum_{\delta \in L'/L_\Delta} \chi_\Delta(\delta) \Bb_1 \lp \frac{\delta_\ell}{|\Delta| \beta_\ell}  \rp.
  \end{split}
\end{equation}
Under the lowering operator $L_\tau$, the holomorphic part of $I_{\Delta, N}(\tau, \eta)$ vanishes and the non-holomorphic part becomes
\begin{equation}
  \label{eq:ThetaDelta}
  \Theta_\Delta(\tau, \eta) := 
- \sqrt{\frac{ v^3}{|\Delta|}}
\sum_{\zeta \in \Gamma_0(N) \backslash \Hb} r_\zeta(\eta) 
 \sum_{\begin{subarray}{c} d > 0\\ d \equiv 0, 3\bmod{4} \end{subarray}} q^d
 \sum_{X \in \Qc_{N, -\Delta d} }  \chi_\Delta(X) 
\mathrm{d}(X, \zeta)
 e^{  - 4 \pi v \mathrm{d}(X, \zeta)^2/ |\Delta|}.
\end{equation}
When $N = 1$ and $\eta = \frac{j'(z)}{j(z) - 1728} dz$, the series above simplifies to $v^{3/2} \overline{\theta_\Delta(\tau)}$ with $\theta_\Delta$ the Siegel theta function in Theorem \ref{thm:scalar1}.
The constant term of $\Theta^*_\Delta(\tau, \eta)$ can also be simplified. Since $N = 1$, $\Gamma_0(N) \backslash \Pb^1(\Qb)$ contains just the cusp $\ell = \ell_0 = \Rb \cdot \smat{0}{1}{0}{0}$, where $\eta$ has residue 1. The constant term then boils down to the sum
$$
\sum_{\delta \in L'/L_\Delta} \chi_\Delta(\delta) \Bb_1 \lp \frac{\delta_\ell}{|\Delta| }  \rp.
$$
For $\delta \in L'/L_\Delta$, $\ell \cap (L_\Delta + \delta)$ is non-trivial if and only if $\delta$ represents $\smat{0}{-C}{0}{0}$, in which case $\delta_\ell = -C$ and $\chi_\Delta(\delta) = \chi_\Delta([0, 0, C]) = \lp \frac{\Delta}{C} \rp$. Therefore, the sum above becomes $\sum_{C \in \Zb/|\Delta|\Zb} \lp \frac{\Delta}{C} \rp \Bb_1 \lp \frac{-C}{|\Delta|} \rp = L(0, \Delta)$ by Theorem 4.2 in \cite{Washington97}.

\section{Indefinite theta functions}

In his thesis \cite{ZwThesis}, Zwegers gave three ways to complete the mock theta functions of Ramanujan into a real-analytic modular object. One of the ways is through the use a real-analytic theta function $\vartheta_L^{c_1, c_2}(\tau) = \sum_{ h \in L'/L} \ef_h \vartheta_{L,h}^{c_1, c_2}(\tau)$ constructed from a lattice $L$ of signature $(p, 1)$ and two negative vectors $c_1, c_2$ such that $(c_1,c_2) <0$. Then $\vartheta_L^{c_1, c_2}(\tau)$ is a real-analytic modular form for weight $(p+1)/2$ on $\Mp_2(\Zb)$ with respect to the Weil representation $\rho_L$. 

In \cite{FK17} Theorem 4.7, Kudla und the second author gave another interpretation of $\vartheta_L^{c_1, c_2}$ as the integral of the Kudla-Millson theta form $\Theta(\tau, z, \varphi_\KM)$ for signature $(p,1)$. 
Namely, the conditions on $c_1$ and $c_2$ imply that they define points $[c_1]$ and $[c_2]$ in the same component of the symmetric space associated to $V = L \otimes_\Zb \Qb$. Then for the geodesic 
$$
\Dc_{c_1, c_2} = \{ \mathrm{span}(tc_1 + (1-t)c_2): t \in [0, 1]\}
$$
in hyperbolic space connecting the points $[c_1]$ and $[c_2]$, we have 
\begin{align}\label{Z-FK}
 \vartheta_{L,h}^{c_1, c_2}(\tau) = \int_{\Dc_{c_1, c_2}} \Theta_h(\tau, z, \varphi_{\KM}).
 \end{align}
Furthermore,  
\begin{align}
  \label{eq:vartheta_L}
  \vartheta_{L,h}^{c_1, c_2}(\tau) &= \sum_{m \ge 0} \sum_{X \in L_{m, h}} \frac{\sgn((X, c_1)) - \sgn((X, c_2))}{2} q^m \\ &\qquad \qquad + \sum_{m \in \Qb}  (\Theta_{m,h}(\tau, [c_2], \tpsi) - \Theta_{m,h}(\tau,[c_1], \tpsi)q^m.  \notag
\end{align}
Here $\tpsi$ is the singular form defined in \eqref{eq:tpsi} (extended to signature $(p,1)$). Moreover, the holomorphic coefficients $\half  (\sgn(X, c_1) - \sgn(X, c_2) )$ are the signed intersection number of $\Dc_{c_1, c_2}$ with the cycle $c_X$. Combing the considerations in \cite{FK17} with those in \cite{FM02} (or using Section~\ref{Boundary-tpsi}) one sees that \eqref{Z-FK} extends to the case when $c_1$ or $c_2$ (or both) are rational isotropic vectors in $V$. Here we use the convention \eqref{eq:tpsicusp}. For similar integrals of the Kudla-Millson theta series, also see \cite{Kudla17} (signature $(p, 2)$) and \cite{FK17b} (signature $(p, q)$). 

Now consider signature $(2, 1)$ and let $\eta = \eta_{\DD}$ be the differential of the third kind associated to the divisor $\DD := [\zeta_1] - [\zeta_2] \in \Div^0(M^\ast)$. Here $\zeta_j \in M^\ast$ is the image of $c_j$ for $j = 1, 2$ in $M^\ast$.
Let $\gamma_{} \subset M$ be the image of $\Dc_{c_1, c_2}$. 
When summing the signed intersection of $c_X$ and $\Dc_{c_1, c_2}$ over $X \in L_{m, h}$, we obtain a $\Gamma$-invariant quantity, which equals to the signed intersection $\Ib(\gamma, [c(X)])$ (defined in \S \ref{subsec:differential}) averaged over $X \in \Gamma \backslash L_{m, h}$.

It is clear from this definition and Theorem \ref{thm:FE} that $-2 \pi i \vartheta^{c_1, c_2}_L(\tau)$ and $I(\tau, \eta_{})$ have the same non-holomorphic part. The discrepancy in their holomorphic part can be explained geometrically using the Kudla-Millson lift of a closed 1-form.
By Lemma \ref{lemma:intersection}, we can relate $\vartheta^{c_1, c_2}_L(\tau)$ to $I(\tau, \eta)$ as follows.
\begin{prop}
  \label{prop:Zwegers}
Fix a section $s: H_1(M^*, \Zb) \to H_1(M^*\backslash \{\zeta_1, \zeta_2\}, \Zb)$ of the projection $\pi$ and $\gamma_s$ a path from $\zeta_1$ to $\zeta_2$ as in \S \ref{subsec:differential}.
Let $\omega_{\eta, s} \in H^1_{\mathrm{DR}}(M^*)$ be the closed differential form satisfying  \eqref{eq:omega_eta}. 
Then there exists a unary theta series $g_{ \eta, s}\in S_{3/2, \rho_L}(\SL_2(\Zb))$ such that
\begin{equation}
  \label{eq:Zwegers2}
 \vartheta^{c_1, c_2}_{L}(\tau) = - \frac{1}{2\pi i} \lp   I(\tau, \eta) - I(\tau, \omega_{\eta, s}) + g_{ \eta, s}(\tau) \rp 
+ I(\tau, \omega_{[\gamma - \gamma_s]}),
\end{equation}
where $\omega_{\mathbf{c}}$ denotes the Poincar\'{e} dual of $\mathbf{c} \in H_1(M^*, \Zb)$. 
\end{prop}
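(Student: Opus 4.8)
The plan is to match the Fourier expansions in $\tau$ of the two sides of \eqref{eq:Zwegers2}, both of which are real-analytic modular forms of weight $3/2$ for $\rho_L$. I would first record that the theta lifts $I(\tau,\omega_{\eta,s})$ and $I(\tau,\omega_{[\gamma-\gamma_s]})$ of the smooth closed $1$-forms $\omega_{\eta,s}$ and $\omega_{[\gamma-\gamma_s]}$ on the compact curve $M^*$ are holomorphic cusp forms of weight $3/2$: rerunning the proof of Theorem~\ref{thm:FE} (Propositions~\ref{prop:Stokes} and~\ref{prop:orbit1}) for a differential with no residues, there is no non-holomorphic contribution and no constant term, and all boundary integrals vanish in the limit because the forms are smooth at the cusps. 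One then \emph{defines} $g_{\eta,s}$ by rearranging \eqref{eq:Zwegers2}, so that \eqref{eq:Zwegers2} holds tautologically and the content of the proposition becomes: (a) $g_{\eta,s}$ so defined is holomorphic, and (b) $g_{\eta,s}$ is a unary theta series (in particular a cusp form).

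For~(a), since $I(\tau,\omega_{\eta,s})$, $I(\tau,\omega_{[\gamma-\gamma_s]})$ and $g_{\eta,s}$ are holomorphic, it suffices that the non-holomorphic parts of $\vartheta^{c_1,c_2}_L(\tau)$ and $-\tfrac1{2\pi i}I(\tau,\eta)$ coincide; this is immediate from Theorem~\ref{thm:FE} (with $\res(\eta)=[\zeta_1]-[\zeta_2]$) together with \eqref{eq:vartheta_L}, using $\zeta_j=[c_j]$, exactly as noted in the paragraph preceding the proposition. (Equivalently, apply $L_\tau$ and invoke Theorem~\ref{thm:FE} and Lemma~\ref{lemma:diff_eq}(ii).)

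For~(b), fix $m>0$ and $h\in L'/L$. By \eqref{eq:vartheta_L} and the intersection-number interpretation of its holomorphic coefficients recalled above, the $q^m$-coefficient of $\vartheta^{c_1,c_2}_{L,h}$ is $\sum_{X\in\Gamma\backslash L_{m,h}}\Ib(\gamma,[c(X)])$, while by Theorem~\ref{thm:FE} that of $I_h(\tau,\eta)$ is $\sum_{X}\int_{c(X)}\eta$. For each $X$ with $c(X)$ avoiding $\zeta_1,\zeta_2$ (the remaining ones being handled by the principal-value convention in \eqref{eq:cycle_int}), Lemma~\ref{lemma:intersection} with $\mathbf{c}'=[c(X)]$ gives $\tfrac1{2\pi i}\int_{c(X)}(\eta-\omega_{\eta,s})=-\Ib(\gamma_s,[c(X)])$, and since $\gamma-\gamma_s$ is a closed loop in $M^*$, Poincaré duality gives $\Ib(\gamma,[c(X)])-\Ib(\gamma_s,[c(X)])=\int_{c(X)}\omega_{[\gamma-\gamma_s]}$. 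Substituting these, the $q^m$-coefficient of $g_{\eta,s}$ collapses, up to an overall nonzero constant, to
\[
\Bigl(\sum_{X}\int_{c(X)}\omega_{\eta,s}-\bigl[I_h(\tau,\omega_{\eta,s})\bigr]_m\Bigr)+2\pi i\Bigl(\sum_{X}\int_{c(X)}\omega_{[\gamma-\gamma_s]}-\bigl[I_h(\tau,\omega_{[\gamma-\gamma_s]})\bigr]_m\Bigr),
\]
the sums over $X\in\Gamma\backslash L_{m,h}$. Now the key point: for a closed $1$-form $\omega$ the lift $I(\tau,\omega)=\int_{M^*}\omega\wedge\Theta(\tau,z,\varphi_\Sh)\,d\bar z$ sees only the $(1,0)$-component of $\omega$, hence equals the Shintani lift of the holomorphic part of the harmonic representative of $[\omega]$; so $[I_h(\tau,\omega)]_m$ is the sum of the cycle integrals of that holomorphic part, and each bracketed difference above is the sum over $X$ of the cycle integrals of the \emph{anti}holomorphic part of $\omega_{\eta,s}$ (resp.\ $\omega_{[\gamma-\gamma_s]}$) — i.e.\ the difference between the Kudla--Millson lift of the closed form and its Shintani lift. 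Since $\eta=\eta_{\DD}$ is canonical and $\res(\eta)$ is supported at two points, $\omega_{\eta,s}$ has purely imaginary periods and $\omega_{[\gamma-\gamma_s]}$ integral periods, so their antiholomorphic parts are $-\overline{\omega_{\eta,s}^{\mathrm{hol}}}$ and $\overline{\omega_{[\gamma-\gamma_s]}^{\mathrm{hol}}}$ respectively; feeding this back, the resulting holomorphic weight-$3/2$ form is identified with a unary theta series by a direct theta-integral computation of the type carried out for $\Theta_\ell$ in Lemma~\ref{lemma:FCl}. This is $g_{\eta,s}$; when $M^*$ has genus zero, $\omega_{\eta,s}=\omega_{[\gamma-\gamma_s]}=0$, so $g_{\eta,s}=0$ and \eqref{eq:Zwegers2} reduces to $\vartheta^{c_1,c_2}_L=-\tfrac1{2\pi i}I(\tau,\eta)$, consistent with the remark earlier in the paper.

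I expect step~(b) to be the main obstacle: pinning down precisely what the Shintani theta lift computes for a closed but non-holomorphic $1$-form (only the holomorphic part of its harmonic representative, the discrepancy being the Kudla--Millson cycle-integral generating series), and then showing that this discrepancy — assembled from cycle integrals of antiholomorphic differentials — is a unary theta series. The remaining ingredients (matching the non-holomorphic parts, the bookkeeping with Lemma~\ref{lemma:intersection} and Poincaré duality, the vanishing of the constant term, and the principal-value treatment of geodesics meeting $\zeta_1$ or $\zeta_2$) are routine.
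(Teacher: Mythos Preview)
Your steps (a) and the algebraic bookkeeping at the start of (b) --- matching the non-holomorphic parts via Theorem~\ref{thm:FE} and \eqref{eq:vartheta_L}, then invoking Lemma~\ref{lemma:intersection} and Poincar\'e duality to compare the holomorphic $q^m$-coefficients --- are exactly what the paper does. The paper's proof is essentially your displayed identity $\int_{c(X)}\omega_{[\gamma-\gamma_s]}=\Ib(\gamma,[c(X)])-\Ib(\gamma_s,[c(X)])$ combined with Lemma~\ref{lemma:intersection}, written out for closed $c(X)$.

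Where you diverge from the paper, and where there is a real problem, is the second half of (b). The paper does \emph{not} try to identify $g_{\eta,s}$ through an analysis of antiholomorphic parts. It simply observes that once the $q^m$-coefficients agree for every $m$ with $c(X)$ closed --- that is, for every $m$ with $\sqrt{Nm}\notin\Qb^\times$ --- the difference $g_{\eta,s}$ is a holomorphic weight-$3/2$ form whose Fourier support lies entirely in the split-hyperbolic indices. That support is precisely the Fourier support of the unary theta series $\Theta_\ell$ attached to the cusps, and this is what justifies calling $g_{\eta,s}$ a unary theta series (consistent with the remark that $g_{\eta,s}=0$ when $M^*$ has only one cusp). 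No antiholomorphic decomposition is needed.

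Your antiholomorphic argument, by contrast, does not work as stated. You correctly note that $I(\tau,\omega)$ only pairs with the $(1,0)$-part of $\omega$, so $[I_h(\tau,\omega)]_m=\sum_X\int_{c(X)}\alpha$ where $\alpha$ is the holomorphic piece of the harmonic representative. The bracketed differences in your displayed formula are then $\sum_X\int_{c(X)}\bar\beta$. But the generating series $\sum_m q^m\sum_X\int_{c(X)}\bar\beta$ is \emph{not} a unary theta series: its coefficients are the complex conjugates of the Shintani-lift coefficients of $\beta$, and assembling these against $q^m$ (rather than $\bar q^m$) does not produce anything modular. So the route you sketch --- ``feeding this back, the resulting holomorphic weight-$3/2$ form is identified with a unary theta series by a direct theta-integral computation'' --- does not go through, and the appeals to $\eta$ being canonical and to purely imaginary periods do not repair it. Drop this entire detour and follow the paper: show the coefficients match for closed $c(X)$, and note that the remaining support forces $g_{\eta,s}$ to be a combination of the $\Theta_\ell$.
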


\begin{proof}
From the definition of $\vartheta^{c_1, c_2}_L$ and Theorem \ref{thm:FE}, we see that both sides have the same non-holomorphic part. 
For the holomorphic part, suppose $c(X)$ is closed for $X \in L_{m, h}$.
This happens when $\sqrt{mN} \not\in \Qb^\times$ or $M^*$ has only one cusp.
We need to show 
$$
\sum_{X \in \Gamma_L \backslash L_{m, h}} \Ib(\gamma, [c(X)])
- \int_{c(X)} \omega_{[\gamma - \gamma_s]}
=
-\frac{1}{2\pi i} 
\sum_{X \in \Gamma_L \backslash L_{m, h}}
\int_{c(X)} \eta - \omega_{\eta, s}
$$
for all $h \in L'/L$.
By definition of the signed intersection number, we know that $\int_{c(X)} \omega_{[\gamma - \gamma_s]} = \Ib(\gamma, [c(X)]) - \Ib(\gamma_s, [c(X)])$. Lemma \ref{lemma:intersection} then finishes the proof.
\end{proof}

\begin{rmk}
If $M^*$ has genus zero, resp.\ only one cusp, then $\omega_{\eta, s}$, resp.\ $g_{ \eta, s}$, vanishes identically. 
Furthermore, if $M^{\ast}$ is the modular curve as in the introduction, we obtain the equality of the twisted theta integral over $\eta$ with the (twisted) $\vartheta^{c_1, c_2}$. 
\end{rmk}

\section{Shimura Curve and Mock Theta Function.}

In this section, we will give an example where $\eta$ is a differential of the third kind on a Shimura curve $M$ and the generating series $I(\tau, \eta)$ is closely related to the third order mock theta function of Ramanujan.

Let $B$ be a quaternion algebra over $\Qb$ with discriminant $6$ \cite[\textsection 3]{Voight09}. It is the $\Qb$-algebra generated by $\alpha_0, \alpha_1$ satisfying
\begin{equation}
  \label{eq:ab}
  \alpha^2_0 = -1,  \alpha^2_1 = 3, \alpha_2 := \alpha_0 \alpha_1 = - \alpha_1 \alpha_0.
\end{equation}
As before, we let $Q = -\Nm$ be the quadratic form.
The subspace $B^0 \subset B$ of trace zero elements is given by $\Qb \alpha_0 \oplus \Qb \alpha_1 \oplus \Qb \alpha_2$. 
It is more convenient to view $B$ as a $\Qb$-subalgebra of $M_2(\Rb)$ via the embedding
\begin{equation}
  \label{eq:iota_inf}
  \begin{split}
    \iota_\infty: B &\hookrightarrow M_2(\Rb) \\
\alpha_0, \alpha_1, \alpha_2 &\mapsto  \pmat{0}{-1}{1}{0}, \sqrt{3} \pmat{1}{0}{0}{-1}, \sqrt{3} \pmat{0}{1}{1}{0}.
  \end{split}
\end{equation}
This extends to an isometry between quadratic spaces $(B_\Rb, \Nm)$ and $(M_2(\Rb), \det)$.
We slightly abuse the notation by using $X$ to denote $\iota_\infty(X)$ for $X \in B_\Rb$.

We choose a maximal order $\Oc := \Zb \oplus \Zb \alpha_0 \oplus \Zb \alpha_1 \oplus \Zb \alpha_3$ with $\alpha_3 := (1 + \alpha_0 + \alpha_1 + \alpha_2)/2$ and the units $\Oc^*_1 \subset \Oc$ of reduced norm 1 act by conjugation on $\Oc$. 
We take as our lattice
\begin{equation}
  \label{eq:L_compact}
L := B^0 \cap \Oc = \Zb \alpha_0 \oplus \Zb \alpha_1 \oplus \Zb \alpha_2.
\end{equation}
Then $Q(\alpha_0) = -1, Q(\alpha_1) = Q(\alpha_2) = 3$ and 
\begin{equation}
\label{eq:Ldual}
L' = \frac{1}{2} \Zb \alpha_0 \oplus \frac{1}{6} \Zb \alpha_1 \oplus  \frac{1}{6} \Zb \alpha_2, \, L'/L = \bigoplus_{j = 1}^3 L_j'/L_j, \, L_j := \Zb \alpha_j.
\end{equation}
We identify $L'_0/L_0$ with $\half\Zb/\Zb$ and $L'_1/L_1, L'_2/L_2$ with $\frac{1}{6} \Zb/\Zb$.
The group $\Oc^*_1 \subset \SO(L)$ fixes the connected component of $\Dc$ and can be viewed as a discrete subgroup of $\SL_2(\Rb)$ via the map $\iota_\infty$ above. 
It can be generated by $\alpha_0, \alpha_0 + \alpha_3, 2\alpha_0 + \alpha_2$, and naturally acts on $L'/L$ with the image in $\Aut(L'/L)$ is isomorphic to $\Zb/6\Zb$.
Let $\Gamma := \Oc^*_1 \cap \Gamma_L$ be the index 6 subgroup of $\Oc^*_1$.
The quotient $M = \Gamma \backslash \Hb$ is a Shimura curve of genus one.

Let $\eta$ be a differential of the third kind on $M$.
For $m \in \Qb_{}$ and $h \in L'/L$, the quantity $\Tr_{m, h}(\eta)$ is defined in  \eqref{eq:trace}.
Theorem \ref{thm:FE} then implies that the generating series 
\begin{equation}
\label{eq:trace_gen}
\sum_{h \in L'/L} \ef_h \sum_{m \in \Qb} \Tr_{m, h}(\eta) q^m
\end{equation}
is the holomorphic part of a mixed mock modular form with shadow $\sum_{\zeta \in M} r_\zeta(\eta) \vartheta(\tau, \zeta)$, where $\res(\eta) = \sum_{\zeta \in M} r_\zeta(\eta) [\zeta]$ and
\begin{align*}
\vartheta(\tau, z) &:= v \sum_{h \in L'/L} \ef_h \vartheta_h(\tau, z), \;
\vartheta_h(\tau, z) := \sum_{X \in L + h} \frac{(X, X(z))}{2} \ebf \lp \frac{(X, X(z))^2}{4} \tau 
-
\frac{|(X, X^\perp(z))|^2}{4} \overline{\tau} \rp
\end{align*}
for $z = x + iy \in \Hb$, $X(z) = \frac{1}{y} \smat{-x}{|z|^2}{-1}{x}$ and $X^\perp(z) = \frac{1}{y}\smat{-z}{z^2}{-1}{z}$.

Let $z_0 = x_0 + iy_0 := \frac{1 + \sqrt{-2}}{\sqrt{3}} \in \Hb$ be a CM point. Then 
\begin{equation}
  \label{eq:intersection}
  \begin{split}
      \Rb X(z_0) \cap L &= \Zb \lambda_0,   \; ( \Rb \Re(X^\perp(z_0)) \oplus \Rb \Im(X^\perp(z_0))) \cap L = \Zb \lambda_1 \oplus \Zb \alpha_2, \\
\lambda_0 &:= 3 \alpha_0 + \alpha_1, \; \lambda_1 := \alpha_0 + \alpha_1,
  \end{split}
\end{equation}
and $\tilde{L} := \Zb \lambda_0 \oplus \Zb \lambda_1 \oplus \Zb \alpha_2$ is a sublattice of $L$ of index 2. 
Denote the lattices $\Zb \lambda_0, \Zb \lambda_1$ by $N, P$ respectively.
Then $\tilde{L} = N \oplus P \oplus L_2$ and $\tilde{L}'/\tilde{L} = N'/N \oplus P'/P \oplus L_2'/L_2 \subset (\Qb/\Zb)^3$.
For $X = r_0 \lambda_0 + r_1 \lambda_1 + r_2 \alpha_2 \in V$, it is easy to check that 
\begin{equation}
\label{eq:3}
 ( X, X(z_0)) = -\frac{2r_0}{\sqrt{6}} Q(\lambda_0) = 2\sqrt{6} r_0 , \;
|(X, X^\perp(z_0))|^2 =  8 r_1^2 + 12 r_2^2.
\end{equation}
Suppose $X \in \tilde{L}'$, then $X \in L'$ if and only if $3 r_0 + r_1 \in \half \Zb $. This also implies $r_0 + r_1 \in \frac{1}{6} \Zb$.
Therefore, we have the following two-to-one surjective map
\begin{equation}
  \label{eq:phi}
  \begin{split}
    \phi: L'/\tilde{L} & \to L'/L \\
\mu = (\mu_0, \mu_1, \mu_2) &\mapsto (3 \mu_0 + \mu_1, \mu_0 + \mu_1, \mu_2)
  \end{split}
\end{equation}
where the addition is carried out in $\Qb/\Zb$. 
It is easy to check that if $X \in (\tilde{L} + \mu) \cap L' \subset V$, then $X \in L + \phi(\mu)$. 
This map induces a linear map $A_\phi: \Cb[L'/L] \to \Cb[\tilde{L}'/\tilde{L}]$ defined by
\begin{equation}
  \label{eq:Aphi}
  A_\phi(\ef_h) := \sum_{\mu \in \phi^{-1}(h)} \ef_\mu
\end{equation}
for each $h \in L'/L$.

Define the theta functions
\begin{equation}
  \label{eq:thetas}
  \begin{split}
      \vartheta_{N}(\tau) &:= \sum_{h \in N'/N} \ef_{h} \sum_{\lambda \in N + h} \frac{(\lambda, \lambda_0)}{\sqrt{6}} \ebf(Q(\lambda)\tau), \,
  \theta_{P}(\tau) := \sum_{h \in P'/P} \ef_{h} \sum_{\lambda \in P + h} \ebf(Q(\lambda)\tau), \\
\theta_{L_2}(\tau) &:= \sum_{h \in L_2'/L_2} \ef_h \sum_{\lambda \in L_2 + h} \ebf(Q(\lambda) \tau).
  \end{split}
\end{equation}
Notice that $\vartheta_{N, \mu_0}(\tau) = -\vartheta_{N, -\mu_0}(\tau)$ and $\theta_{P, \mu_1}(\tau) = \theta_{P, -\mu_1}(\tau)$ for $\mu_0 \in N'/N, \mu_1 \in P'/P$.
For each $h = (h_0, h_1, h_2) \in L'/L$, the function $\vartheta_h(\tau, z_0)$ can be written as
$$
\vartheta_{h} (\tau, z_0) = v \sum_{\mu = (\mu_0, \mu_1, \mu_2) \in L'/\tilde{L}, \; \phi(\mu) = h}
 \vartheta_{N, \mu_0} (\tau) \theta_{P, \mu_1}(-\overline{\tau}) \theta_{L_2, \mu_2}(-\overline{\tau}) .
$$
The same argument applies when $z_0$ is replaced by $- \overline{z_0}$. In that case, the equation above becomes
$$
\vartheta_{h} (\tau, - \overline{z_0}) = v \sum_{\mu = (\mu_0, \mu_1, \mu_2) \in L'/\tilde{L}, \; \phi(\mu) = h}
 \vartheta_{N, \sigma(\mu_0)} (\tau) \theta_{P, \mu_1}(-\overline{\tau}) \theta_{L_2, \mu_2}(-\overline{\tau}),
$$
where $\sigma \in \mathrm{Iso}(N'/N)$ sends $\mu_0 \in N'/N = \frac{1}{12} \Zb$ to $5 \mu_0 \in N'/N$.
Since $\sigma$ is an isometry, it defines an automorphism on $\Ac_{k}(\rho_N)$, which we also use $\sigma$ to denote.

 Let $\eta$ a differential of the third kind on $M$ with the residue divisor 
\begin{equation}
  \label{eq:etaz0}
  \res(\eta) = [z_0 ] - [-\overline{z_0}].
\end{equation}
Then the generating series  \eqref{eq:trace_gen} is the holomorphic part of an automorphic form in $\Ac_{3/2}(\rho_L)$, whose image under $\xi_{3/2}$ is 
\begin{equation}
  \label{eq:shadow1}
  v A_\phi^*((\vartheta_{N, \sigma}(\tau)) \otimes \theta_P(-\overline{\tau}) \otimes \theta_P(-\overline{\tau})) \in \Ac_{1/2}(\overline{\rho_L}),
\end{equation}
where $\vartheta_{N, \sigma}(\tau) : = \vartheta_{N}(\tau) - \sigma(\vartheta_N(\tau))$.

On the other hand, recall that the following third order mock theta functions of Ramanujan
\begin{equation}
  \label{eq:mock3}
  \begin{split}
      f(q) &:= \sum_{n = 0}^\infty \frac{q^{n^2}}{\prod^{n}_{k=1}(1+q^k)^2}, 
\omega(q) :=  \sum_{n = 0}^\infty \frac{q^{2n^2 + 2n}}{\prod^n_{k=1}(1 - q^{2k - 1})^2}.
  \end{split}
\end{equation}
If one further set $q = \ebf(\tau)$ and define 
\begin{equation}
  \label{eq:fj}
  f_0(\tau) := q^{-1/24} f(q), \; f_1(\tau) := 2q^{1/3} \omega(q^{1/2}), \; f_2 := 2q^{1/3} \omega(-q^{1/2}),
\end{equation}
then Zwegers showed that the column vector $(f_0, f_1, f_2)^T$ is a mock modular form with a unary theta function as its shadow \cite{Zw00}. 
One can use this to show that the vector (see \cite{BS17}) 
$$
\tilde{\vartheta}_{N, \sigma}^+ := -\frac{1}{4}(0, f_0, f_2 - f_1, 0, -f_1 - f_2, -f_0, 0, f_0, f_1 + f_2, 0, f_1 - f_2, -f_0)^T
$$
is the holomorphic part of a harmonic Maass form $\tilde{\vartheta}_{N, \sigma}$, whose image under $\xi_{3/2}$ is $\vartheta_{N, \sigma}$. 
This has a pole of the form $q^{-1/24} + O(1)$ at the components $\ef_{\mu_0}$ for $\mu_0  \in \{ \tfrac{1}{12}, \tfrac{5}{12}, \tfrac{7}{12}, \tfrac{11}{12}\} \subset N'/N$. In these cases, $\mu_1 \in \{\tfrac{1}{4}, \tfrac{3}{4} \} \subset P'/P$ in order for $(\mu_0, \mu_1, \mu_2) \in L'/\tilde{L}$. For these $\mu_1$, the function $\theta_{P, \mu_1}(\tau)$ has the form $q^{1/8} + O(q)$. Therefore for any $h \in L'/L$, the function 
$$
\sum_{\mu = (\mu_0, \mu_1, \mu_2) \in L'/\tilde{L}, \phi(\mu) = h} \tilde{\vartheta}_{N, \sigma, \mu_0}(\tau) \theta_{P, \mu_1}(\tau) \theta_{L_2, \mu_2}(\tau)
$$
decays exponentially at the cusp. 
Therefore, there exists a cusp form $g \in S_{3/2}(\rho_L)$ such that 
$$
\frac{I(\tau, \eta)}{2\pi i} = A^*_\phi(\tilde{\vartheta}_{N, \sigma}(\tau) \otimes \theta_P(\tau) \otimes \theta_P({\tau}) ) + g(\tau).
$$

\bibliography{ShLift.bib}{}

\providecommand{\bysame}{\leavevmode\hbox to3em{\hrulefill}\thinspace}
\providecommand{\MR}{\relax\ifhmode\unskip\space\fi MR }
\providecommand{\MRhref}[2]{%
  \href{http://www.ams.org/mathscinet-getitem?mr=#1}{#2}
}
\providecommand{\href}[2]{#2}
\begin{thebibliography}{10}

\bibitem{AE13}
Claudia Alfes and Stephan Ehlen, \emph{Twisted traces of {CM} values of weak
  {M}aass forms}, J. Number Theory \textbf{133} (2013), no.~6, 1827--1845.
  \MR{3027941}

\bibitem{Borcherds98}
Richard~E. Borcherds, \emph{Automorphic forms with singularities on
  {G}rassmannians}, Invent. Math. \textbf{132} (1998), no.~3, 491--562.
  \MR{1625724 (99c:11049)}

\bibitem{BO10}
Jan Bruinier and Ken Ono, \emph{Heegner divisors, {$L$}-functions and harmonic
  weak {M}aass forms}, Ann. of Math. (2) \textbf{172} (2010), no.~3,
  2135--2181. \MR{2726107}

\bibitem{BF04}
Jan~Hendrik Bruinier and Jens Funke, \emph{On two geometric theta lifts}, Duke
  Math. J. \textbf{125} (2004), no.~1, 45--90. \MR{2097357 (2005m:11089)}

\bibitem{BF06}
\bysame, \emph{Traces of {CM} values of modular functions}, J. Reine Angew.
  Math. \textbf{594} (2006), 1--33. \MR{2248151}

\bibitem{BS17}
Jan~Hendrik Bruinier and Markus Schwagenscheidt, \emph{Algebraic formulas for
  the coefficients of mock theta functions and {W}eyl vectors of {B}orcherds
  products}, J. Algebra \textbf{478} (2017), 38--57. \MR{3621662}

\bibitem{DMZ12}
Atish Dabholkar, Sameer Murthy, and Don Zagier, \emph{Quantum black holes, wall
  crossing, and mock modular forms},  (2012).

\bibitem{Funke04}
Jens Funke, \emph{Heegner divisors and nonholomorphic modular forms},
  Compositio Math. \textbf{133} (2002), no.~3, 289--321. \MR{1930980}

\bibitem{FK17}
Jens Funke and Stephen Kudla, \emph{Mock modular forms and geometric theta
  functions for indefinite quadratic forms.},  (2017).

\bibitem{FK17b}
Jens Funke and Stephen Kudla, \emph{Theta integrals and generalized error
  functions, ii}, 2017.

\bibitem{FM02}
Jens Funke and John Millson, \emph{Cycles in hyperbolic manifolds of
  non-compact type and {F}ourier coefficients of {S}iegel modular forms},
  Manuscripta Math. \textbf{107} (2002), no.~4, 409--444. \MR{1906769}

\bibitem{Griffiths89}
Phillip~A. Griffiths, \emph{Introduction to algebraic curves}, Translations of
  Mathematical Monographs, vol.~76, American Mathematical Society, Providence,
  RI, 1989, Translated from the Chinese by Kuniko Weltin. \MR{1013999}

\bibitem{GKZ87}
B.~Gross, W.~Kohnen, and D.~Zagier, \emph{Heegner points and derivatives of
  {$L$}-series. {II}}, Math. Ann. \textbf{278} (1987), no.~1-4, 497--562.
  \MR{909238 (89i:11069)}

\bibitem{HZ76}
F.~Hirzebruch and D.~Zagier, \emph{Intersection numbers of curves on {H}ilbert
  modular surfaces and modular forms of {N}ebentypus}, Invent. Math.
  \textbf{36} (1976), 57--113. \MR{0453649 (56 \#11909)}

\bibitem{Kudla17}
Stephen Kudla, \emph{Theta integrals and generalized error functions},
  manuscripta mathematica (2017).

\bibitem{KM82}
Stephen~S. Kudla and John~J. Millson, \emph{Geodesic cyclics and the {W}eil
  representation. {I}. {Q}uotients of hyperbolic space and {S}iegel modular
  forms}, Compositio Math. \textbf{45} (1982), no.~2, 207--271. \MR{651982}

\bibitem{KM87}
\bysame, \emph{The theta correspondence and harmonic forms. {II}}, Math. Ann.
  \textbf{277} (1987), no.~2, 267--314. \MR{886423}

\bibitem{KM90}
\bysame, \emph{Intersection numbers of cycles on locally symmetric spaces and
  {F}ourier coefficients of holomorphic modular forms in several complex
  variables}, Inst. Hautes \'Etudes Sci. Publ. Math. (1990), no.~71, 121--172.
  \MR{1079646}

\bibitem{Scholl86}
A.~J. Scholl, \emph{Fourier coefficients of {E}isenstein series on
  noncongruence subgroups}, Math. Proc. Cambridge Philos. Soc. \textbf{99}
  (1986), no.~1, 11--17. \MR{809492}

\bibitem{shimura73}
Goro Shimura, \emph{On modular forms of half integral weight}, Ann. of Math.
  (2) \textbf{97} (1973), 440--481. \MR{0332663 (48 \#10989)}

\bibitem{Shintani75}
Takuro Shintani, \emph{On construction of holomorphic cusp forms of half
  integral weight}, Nagoya Math. J. \textbf{58} (1975), 83--126. \MR{0389772}

\bibitem{Sko90}
Nils-Peter Skoruppa, \emph{Explicit formulas for the {F}ourier coefficients of
  {J}acobi and elliptic modular forms}, Invent. Math. \textbf{102} (1990),
  no.~3, 501--520. \MR{1074485}

\bibitem{Voight09}
John Voight, \emph{Shimura curve computations}, Arithmetic geometry, Clay Math.
  Proc., vol.~8, Amer. Math. Soc., Providence, RI, 2009, pp.~103--113.
  \MR{2498057}

\bibitem{Wald79}
Michel Waldschmidt, \emph{Nombres transcendants et groupes alg\'ebriques},
  Ast\'erisque, vol.~69, Soci\'et\'e Math\'ematique de France, Paris, 1979,
  With appendices by Daniel Bertrand and Jean-Pierre Serre, With an English
  summary. \MR{570648}

\bibitem{Washington97}
Lawrence~C. Washington, \emph{Introduction to cyclotomic fields}, second ed.,
  Graduate Texts in Mathematics, vol.~83, Springer-Verlag, New York, 1997.
  \MR{1421575}

\bibitem{Wu84}
G.~W\"ustholz, \emph{Zum {P}eriodenproblem}, Invent. Math. \textbf{78} (1984),
  no.~3, 381--391. \MR{768986}

\bibitem{Za09}
Don Zagier, \emph{Ramanujan's mock theta functions and their applications
  (after {Z}wegers and {O}no-{B}ringmann)}, Ast\'erisque (2009), no.~326, Exp.
  No. 986, vii--viii, 143--164 (2010), S{\'e}minaire Bourbaki. Vol. 2007/2008.
  \MR{2605321 (2011h:11049)}

\bibitem{Zw00}
S.~P. Zwegers, \emph{Mock {$\theta$}-functions and real analytic modular
  forms}, {$q$}-series with applications to combinatorics, number theory, and
  physics ({U}rbana, {IL}, 2000), Contemp. Math., vol. 291, Amer. Math. Soc.,
  Providence, RI, 2001, pp.~269--277. \MR{1874536 (2003f:11061)}

\bibitem{ZwThesis}
\bysame, \emph{Mock theta functions}, Proefschrift Universiteit Utrecht, 2002,
  Thesis (Ph.D.)--Universiteit Utrecht.

\end{thebibliography}
\bibliographystyle{amsplain}

\end{document}